\providecommand{\noopsort}[1]{}}
\newcommand{\bea}{\begin{eqnarray}}
\newcommand{\eea}{\end{eqnarray}}
\newcommand{\be}{\begin{equation}}
\newcommand{\ee}{\end{equation}}
\newcommand{\beann}{\begin{eqnarray*}}
\newcommand{\eeann}{\end{eqnarray*}}
\newcommand{\bal}{\begin{align}}
\newcommand{\eal}{\end{align}}
\newcommand{\balnn}{\begin{align*}}
\newcommand{\ealnn}{\end{align*}}
\newcommand{\nn}{\nonumber}
\def\R{{\mathbb R}}
\def\ba{\begin{array}}
\def\ea{\end{array}}
\def\bd{\begin{displaymath}}
\def\ed{\end{displaymath}}
\def\P{{\mathbb P}}
\def\1{{\mathbf 1}}
\def\A{{\mathcal A}}
\def\U{{\mathcal U}}
\def\E{{\mathbb E}}
\def\F{{\mathcal F}}
\def\M{{\mathcal M}}
\def\N{{\mathbb N}}
\def\G{{\mathbf G}}
\def\Y{{\mathcal Y}}
\def\X{{\mathcal X}}
\def\G{{\mathbb G}}
\def\T{{\mathcal T}}
\def\NN{{\mathcal N}}
\def\Z{{\mathbb Z}}
\def\B{{\mathcal B}}
\def\de{{\mathrm{d}}}
\DeclareMathOperator{\e}{e}
\def\B{{\mathcal B}}
\DeclareMathOperator{\supp}{supp}
\newtheorem{thm}{Theorem}
\newtheorem{cor}{Corollary}
\newtheorem{lemma}{Lemma}
\newtheorem{rem}{Remark}
\newtheorem{definition}{Definition}
\title{Functional marked point processes -- A natural structure to unify spatio-temporal frameworks and to analyse dependent functional data
}
\date{}
\author{}
\begin{document}
\newcommand{\edt}[1]{{\vbox{ \hbox{#1} \vskip-0.3em \hrule}}}

\maketitle

\begin{center}
Cronie, O.\footnote{e-mail: ottmar.cronie@umu.se (corresponding author)}\footnote{Department of Mathematics and Mathematical Statistics, Ume{\aa} University, Sweden}, Ghorbani, M.$^{\dagger}$, 
Mateu, J.\footnote{
Department of Mathematics, 
Universitat Jaume I, 
Campus Riu Sec, 
12071 Castell\'on, 
Spain
} 
and 
Yu, J.$^{\dagger}$ 

\end{center}

\begin{abstract}

This paper treats functional marked point processes (FMPPs), which are defined as marked point processes where the marks are random elements in some (Polish) function space. Such marks may represent e.g.\ spatial paths or functions of time. 
To be able to consider e.g.\ multivariate FMPPs, we also attach an additional, Euclidean, mark to each point. 
We indicate how FMPPs quite naturally connect the point process framework with both the functional data analysis framework and the geostatistical framework. 
We further show that various existing models fit well into the FMPP framework. 
In addition, we introduce a new family of summary statistics, weighted marked reduced moment measures, together with their non-parametric estimators, in order to study features of the functional marks. 
We further show how they generalise other summary statistics and we finally apply these tools to analyse population structures, such as demographic evolution and sex ratio over time, in Spanish provinces.

\end{abstract}

\noindent {\bf Key words}: 
Correlation functional, 
Functional data analysis, 
Intensity functional,
Marked point process, 
Non-parametric estimation, 
Palm distribution, 
Population growth,
Spatio-temporal geostatistical marking,
Weighted marked reduced moment measure.

\begin{bibunit}

\section{Introduction}\label{SectionIntroduction}

Many types of functional data, such as financial time series, animal movements, 
growth functions 
for 
trees in a forest stand, the spatial extensions of outbreaks of a disease over time with respect to the outbreak centres, population growth functions of towns/cities in a country, and different functions describing spatial dependence (e.g.\ LISA functions; see Section \ref{SectionMarkStructures} and the references therein), 
are represented as collections $\{f_1(t),\ldots,f_n(t)\}$, $t\in\T\subset[0,\infty)$, $n\geq1$, of functions/paths in some $k$-dimensional Euclidean space $\R^k$, $k\geq1$; note that the argument $t$ need not represent time, it could e.g.\ represent spatial distance. 
The common approach to deal with such data within the field of functional data analysis (FDA) \citep{RamsaySilverman2005} is to assume that the functions $f_i$, $i=1,\ldots,n$, belong to some suitable family of functions (usually an $L_2$-space) and are realisations/sample paths of some collection of independent and identically distributed (iid) random functions/stochastic processes $\{F_1(t),\ldots,F_n(t)\}$, $t\in\T$, with sample paths belonging to the family of functions in question. 


For many applications, however, the following two adequate questions may quite naturally arise: 

\begin{enumerate}

\item Does it make sense to assume that the random elements $F_1,\ldots,F_n$, which have generated the functional data set $\{f_1,\ldots,f_n\}$, are in fact iid? 

\item Is the study designed in such a way that the sample size $n$ is known a priori, or is $n$ in fact unknown before the data set is realised?
\end{enumerate}
\begin{figure}[H]
\begin{center}
\begin{tabular}{cc}
 \includegraphics*[width=0.35\textwidth,  height=0.35\textwidth]{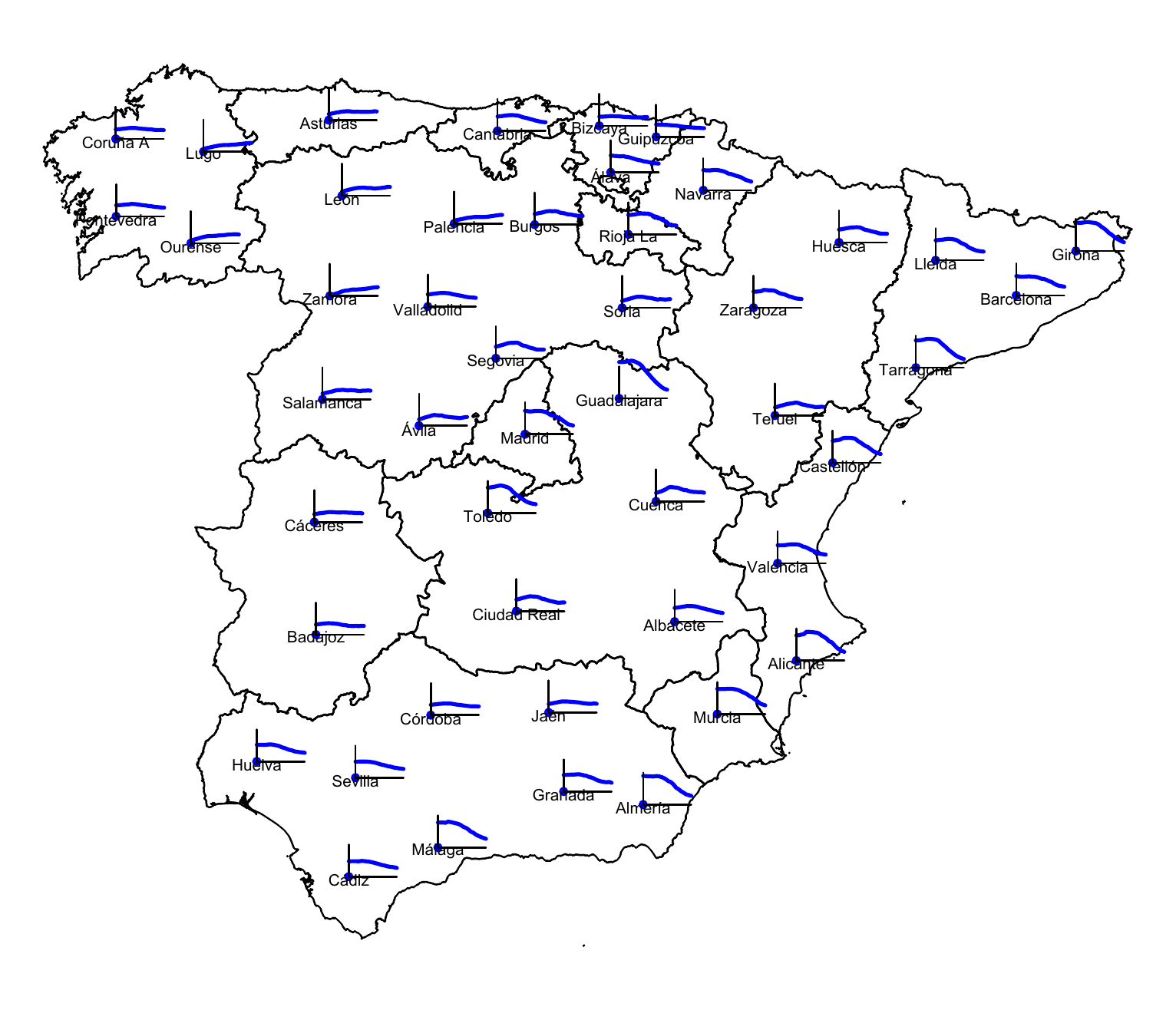}
&
 \includegraphics*[width=0.35\textwidth,  height=0.35\textwidth]{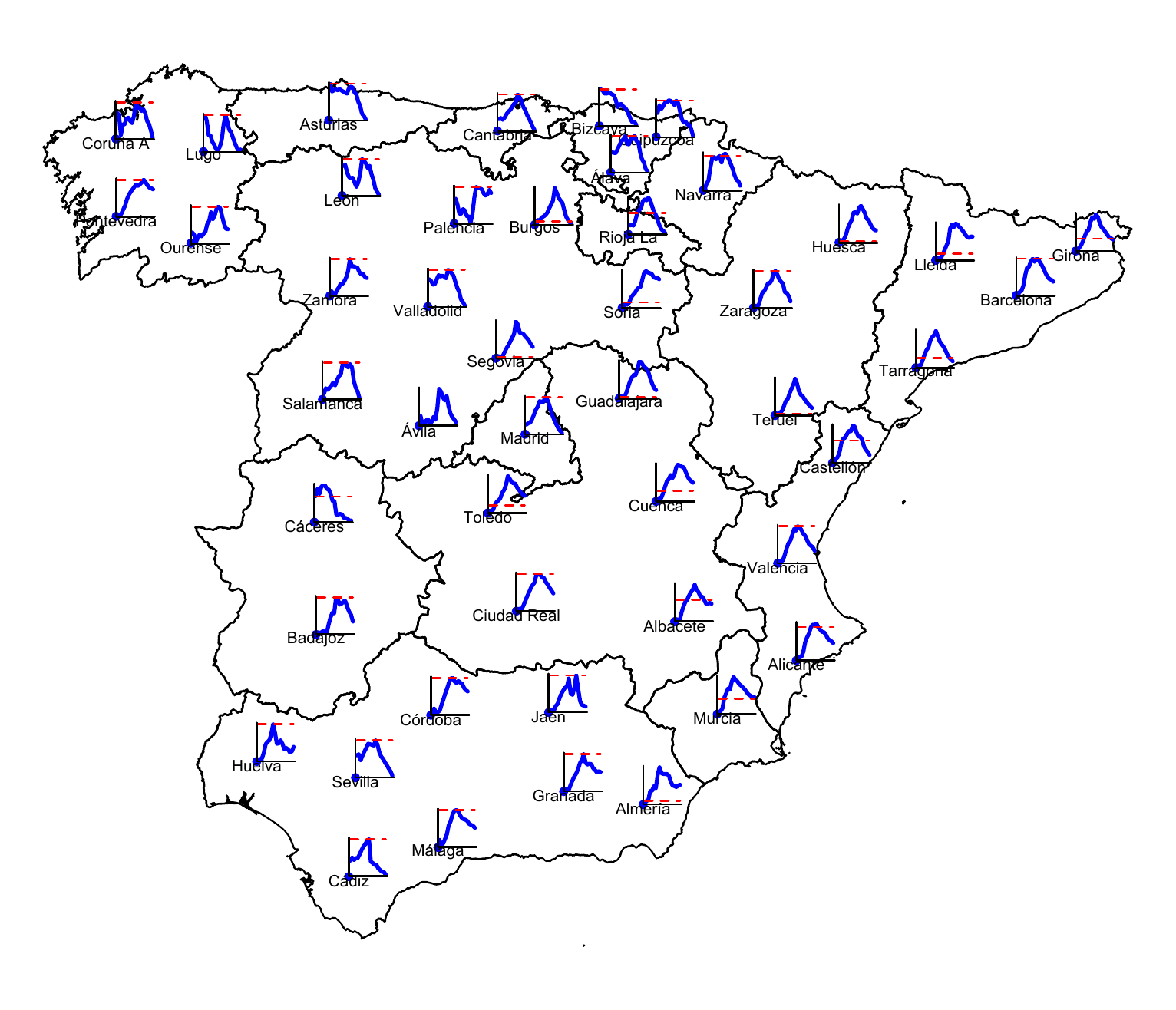}
 \cr
\includegraphics*[width=0.35\textwidth, height=0.35\textwidth]{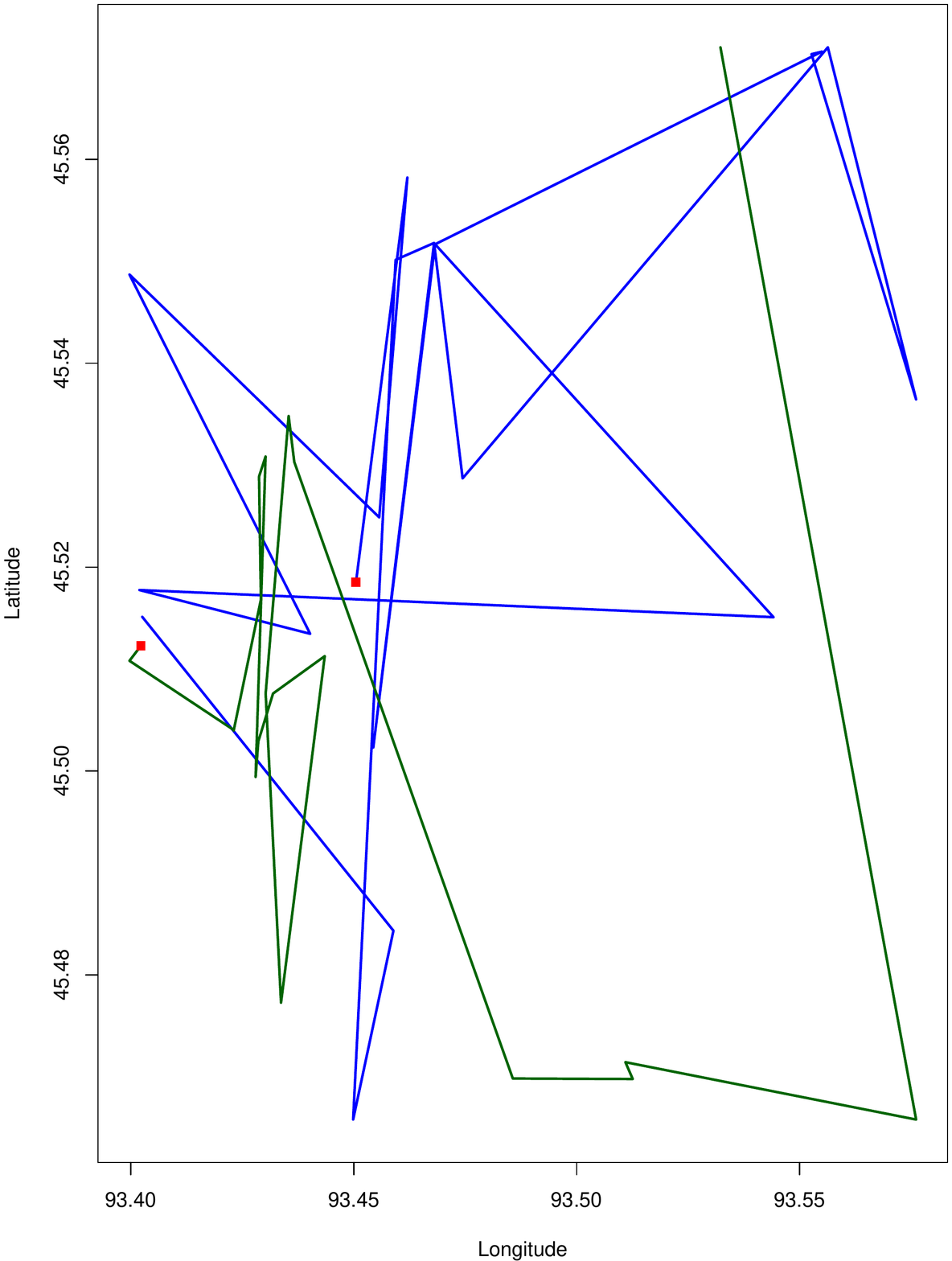}
&
\includegraphics*[width=0.35\textwidth, height=0.33\textwidth]{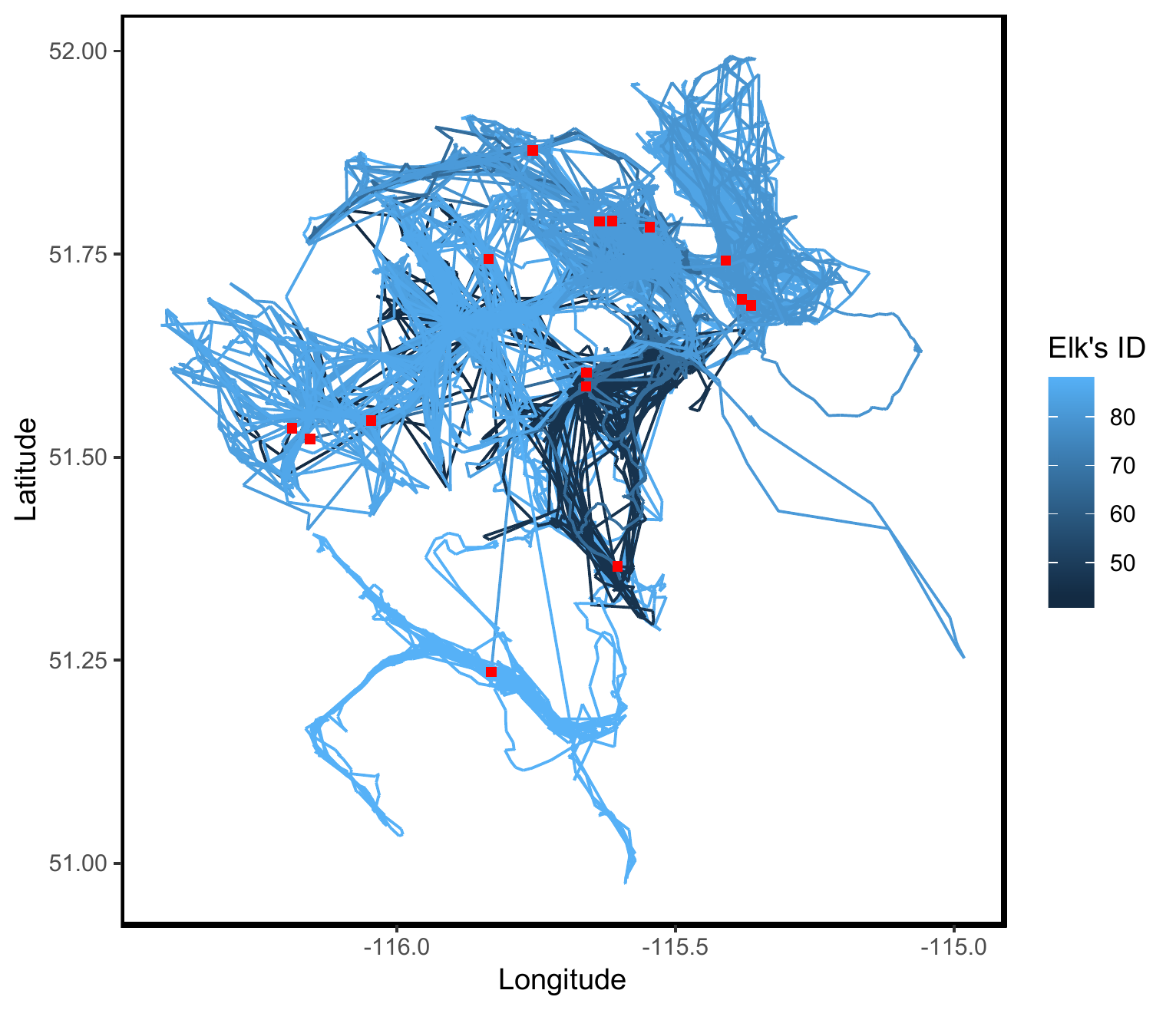}
\end{tabular}
 \caption{
 Top panels: Spanish province data. Log-scale demographic evolution (top left) and sex ratio (top right) in 47 provinces of Spain, for the years 1998 to 2017. 
 Bottom panel:  Movement tracks. The first 20 movement tracks of two Mongolian wolves (bottom left). Movement tracks of 15 Ya Ha Tinda elks in Banff national park, Canada (bottom right); 
 the red squares are the starting points of the tracks.}
 \label{Illu}
\end{center}
\end{figure}
Functional data sets (believed to be) generated in accordance with the above remarks will be referred to as {\em functional marked point patterns} and 
Figure~\ref{Illu} provides illustrative examples of such data sets. 
The top panels show two functional marked point patterns 
based on the 
centres of the provinces on the Spanish mainland. To each point, which corresponds to a centre, we have associated the demographic evolution of the population on logarithmic scale (left) and the sex ratio (right), over the the years 1998 to 2017. 
In the top right panel, for each of the $47$ functions/provinces, 
the horizontal red dashed line corresponds to $y=1$, which illustrates the case where we have the same size of genders in the province in question. 
 The 
bottom panels show animal movement tracks. The lower left panel shows the first 20 movement tracks of two Mongolian wolves, starting from random initial monitoring locations (red squares); the data are taken from the Movebank website. The lower right panel shows the movement tracks of 15 Ya Ha Tinda elk \citep{Hebblewhite:Merrill:08}, starting from some random initial monitoring locations.

Another setting where these questions also naturally arise is found in spatio-temporal geostatistics \citep{Montero:etal:15}. Assume that each of the data-generating stochastic processes $F_i(t)=Z(x_i,t)$, $t\in\T$, $i=1,\ldots,n$, is associated with a spatial location $x_i\in W\subset\R^d$ and that $Z(x,t)$, $(x,t)\in W\times\T$, is a (Gaussian) spatio-temporal random field. Here the functions $F_1,\ldots,F_n$ are clearly not independent (ignoring pathological cases) and one may further ask whether it would not in fact make sense to assume that the sampling/monitoring locations $x_1,\ldots,x_n$ are actually randomly generated. In addition, does it make sense to assume that the total number of such locations was fixed a priori, or did these locations e.g.\ appear over times (in relation to each other), thus allowing us to treat them as a randomly evolving entity with a random total number of components $N\geq1$? For instance, all the weather stations monitoring precipitation in a given country/region have (most likely) arrived over time, in relation to each other, rather than being placed at their individual locations at the same time. E.g., we do not know a priori how many stations will have appeared during the period 2010-2040 and where they will be located.


Taking these remarks into account, we argue that for many functional data sets $\{f_1(t),\ldots,f_n(t)\}$, $t\in\T\subset[0,\infty)$, $n\geq1$, 
it would make sense to assume i) that $n\geq1$ is the realisation of some discrete non-negative random variable $N$ and ii) that (conditional on $N=n$) the random functions $F_1,\ldots,F_n$ are possibly dependent. 
A natural way to tackle the statistical analysis under such non-standard assumptions 
is to assume that the functional data set is generated by a {\em point process} in some space $\F$ of functions $f:\T\to\R^k$. This would mean that we would model the functional data set (a functional marked point pattern) as the realisation of a set of random functions $\{F_1(t),\ldots,F_N(t)\}$, $t\in\T$, of random size $N$. Note that by construction, all components $F_i$ have the same marginal distributions. Under such a setup, a so-called binomial point process \citep{Moller,VanLieshout} would yield the classical FDA setup mentioned above. 
Note that the idea of analysing point patterns (collections of points) with attached functions has already been noted in the literature \citep{Comas,DelicadoGiraldoComas}.

It is often the case that 
these functions 
have some sort of spatial dependence. E.g., two functions $f_i$ and $f_j$, with starting points $f_i(0)$ and $f_j(0)$ which are spatially close to each other in $\R^k$, either gain or loose from each other's vicinity. Accordingly, it seems natural to generate $F_1,\ldots,F_N$ conditionally on some collection of random spatial locations $X_i$ and some further set of random variables $L_i$ associated with the random functions $F_i$; conditionally on the spatial locations, the $L_i$'s would influence the random functions $F_i$ in a non-spatial sense. We argue that the natural setting to do this is through {\em functional marked point processes (FMPPs)}. More precisely, we define an FMPP $\Psi=\{(X_i,(L_i,F_i))\}_{i=1}^N$ as a spatial point process $\Psi_G=\{X_i\}_{i=1}^N$ in $\R^d$ to which we assign marks $\{(L_i,F_i)\}_{i=1}^N$; note that by forcing all $L_i$ to take the same value, we may reduce the FMPP to the collection $\{(X_i,F_i)\}_{i=1}^N$.

We here take a full grip and provide a proper framework for FMPPs, where we in particular take into account that for the standard point process machinery to go through (in particular the use of regular conditional probability distributions), one has to assume that the mark space, and thereby the function space $\F$, is a Polish space \citep{DVJ2}. 
In particular, one may then provide a reference stochastic process $X^{\F}$, 
with sample paths in $\F$, 
whose distribution $\nu_{\F}$ on $\F$ acts as a reference measure which one integrates with respect to (in a Radon-Nikodym sense). We further provide a plethora of examples from the literature which fit into the FMPP framework and discuss these in some detail. Examples include geostatistics \citep{cressie:kornak:03} with random sampling locations, point processes marked with "spatio-temporal random closed sets", e.g.\ spatio-temporal boolean models \citep{Sebastian:etal:06}, constructed functional marks, e.g.\ so-called LISA functions \citep{MateuLorenzoPorcu}, and the Renshaw-S{\"a}rkk{\"a} growth-interaction model \citep{RS1,RS2}. To be able to carry out statistical analyses in the context of FMPPs, various moment characteristics, such as product densities, are required and we here cover such characteristics. A key observation here is that we, in contrast to previous works, completely move away from the (arguably unrealistic) assumption of stationarity. 
We then proceed to discussing various general marking structures, such as the marks having a common marginal distribution and the marks being (conditionally) independent. 
%
To study interactions between functional marks, we further define new types of summary statistics (of arbitrary order), which we refer to as {\em weighted marked reduced moment measures} and {\em mark correlation functionals}. These summary statistics are essentially mark-test function-weighted summary statistics 
which have been restricted to pre-specified mark-groupings. We study them in different contexts and show how they under different assumptions reduce to different existing summary statistics. 
In addition, we provide non-parametric estimators for all the summary statistics and show their unbiasedness.
We also show how these summary statistic estimators can be employed to 
carry out 
functional data analysis when the functional data-generating elements are spatially dependent (according to an FMPP). 
We finally apply our summary statistic estimators to the data illustrated in the top panels of Figure~\ref{Illu}, in order to analyse 
population structures such as demographic evolution and sex ratio of human population over time in Spanish provinces.
\section{Functional marked point processes}\label{sec:FMPP}



Throughout, let $\X$ be a subset of $d$-dimensional Euclidean space $\R^d$, $d\geq1$, which is either compact or given by all of $\R^d$. 
Denote by $\|\cdot\|=\|\cdot\|_d$ the $d$-dimensional Euclidean norm, by $\B(\X)$ the Borel sets of $\X\subset\R^d$ and by $|\cdot|=|\cdot|_d$ the Lebesgue measure on $\X$; $\int\de x$ denotes integration w.r.t.\ $|\cdot|$. It will be clear from the context whether $|\cdot|$ is used for the Lebesgue measure or the absolute value. We 
denote by $\B(\cdot)^n$ the $n$-fold product of an arbitrary Borel $\sigma$-algebra $\B(\cdot)$ with itself. Moreover, we 
denote by $\mu_1\otimes\mu_2$ the product measure generated by measures $\mu_1$ and $\mu_2$ and by $\mu_1^n$ the $n$-fold product of $\mu_1$ with itself. Recall further that a topological space is called {\em Polish} if there is a metric/distance which generates the underlying topology and turns the space into a complete and separable metric space. A closed ball of radius $r\geq0$, centred in $x\in S$, where the space $\mathcal{S}$ is equipped with a metric $d_{\mathcal{S}}(\cdot,\cdot)$, will be denoted by $B_{\mathcal{S}}[x,r]=\{y\in S:d_{\mathcal{S}}(x,y)\leq r\}$.

Consider a point process $\Psi_G=\{X_i\}_{i=1}^N$, $N\in\N_0=\{0,1,2,\ldots,\infty\}$, on $\X$ \citep{Illian,SKM}. 
Throughout the paper we refer to $\Psi_G$ as a {\em ground/unmarked} point process. 
To each point of $\Psi_G$ we may attach a further random element, a so-called mark, in order to construct a marked point process $\Psi$. 
In this paper, a mark is given by a $k$-dimensional random function/stochastic process $F_i(t)=(F_{i1}(t),\ldots,F_{ik}(t))$, $t\in\T\subset[0,\infty)$, a {\em functional mark}, possibly together with some further random variable $L_i$, which we refer to as an {\em auxiliary/latent mark}. The resulting marked point process $\Psi=\{(X_i,(L_i,F_i))\}_{i=1}^N$, $N\in\N_0$, will be referred to as a {\em functional marked point process (FMPP)}. 
The main purpose of including auxiliary marks is to control the supports of the functional marks, on the one hand, and on the other hand they may serve as indicators/labels for different types of points of the point process, in a classical multi-type point process sense.

\subsection{Construction of functional marked point processes}




To formally define an FMPP, we first need to specify the underlying mark space $\M$. The general theory for marked point processes \citep{DVJ1,DVJ2,VanLieshout} allows us to consider any Polish space $\M$ as mark space. 
Here we let the mark space be the Polish product space $\M=\A\times\F$ given by the product of 
\begin{itemize}
\item a Borel subset $\A\ni L_i$ of some Euclidean space $\R^{k_{\A}}$, $k_{\A}\geq1$, referred to as the {\em auxiliary/latent mark space}, 
\item a Polish {\em function space} $\F=\U^k\ni F_i$, $k\geq1$; 
each element $f=(f_1,\ldots,f_k)\in\F=\U^k$ 
has components $f_j:\T\to\R$, 
$j=1,\ldots,k$. 
\end{itemize}
Note that due to the Polish structures of 
these spaces, 
the Borel sets of $\M$ are given by the product $\sigma$-algebra $\B(\M)=\B(\A\times\F)=\B(\A)\otimes\B(\F)=\B(\R^{k_{\A}})\otimes\B(\U^k)=\B(\R)^{k_{\A}}\otimes\B(\U)^k$. 
Explicit examples of auxiliary and functional mark spaces are given in Appendix~\ref{s:MarkSpaceChoices}.



Let $\Y=\X\times\M$ and let $ N_{lf}$ be the collection of all point patterns, i.e.\ locally finite subsets $\psi=\{(x_1,l_1,f_1),\ldots,(x_n,l_n,f_n)\}\subset\Y$, $n\geq0$; $n=0$ corresponds to $\psi=\emptyset$. Note that local finiteness means that the cardinality $\psi(A)=|\psi\cap A|$ is finite for any bounded Borel set $A\in\B(\Y)$. Denote the corresponding counting measure $\sigma$-algebra on $ N_{lf}$ by $\NN_{lf}$ (see \citet[Chapter 9]{DVJ2}); $\NN_{lf}$ is the $\sigma$-algebra generated by the mappings $\psi\mapsto\psi(A)\in\N_0$, $\psi\in N_{lf}$, $A\in\B(\Y)$. 
By construction, since point patterns here are defined as subsets, all $\psi\in  N_{lf}$ are simple, i.e.\ $\psi(\{(x,l,f)\})\leq\psi_{G}(\{x\})\in\{0,1\}$ for any $(x,l,m)\in\X\times\A\times\F$.

\begin{definition}
Given some probability space $(\Omega,\Sigma,\P)$, a point process $\Psi=\{(X_1,L_1,F_1),\ldots,(X_N,L_N,F_N)\}$, $N\in\N_0$, on $\Y=\X\times\M=\X\times\A\times\F$ 
is a measurable mapping from 
$(\Omega,\Sigma,\P)$ to the space $(N_{lf},\NN_{lf})$. 

If a point process $\Psi$ on $\Y$ is such that the {\em ground/unmarked point process} $\Psi_G=\{x:(x,l,f)\in\Psi\}$ is a well defined point process in $\X$, we call $\Psi$ 
a \emph{(simple) functional marked point process (FMPP)} and when $\X\subset\R^{d-1}\times\R$, $d\geq2$, and $\Psi_G$ is a spatio-temporal point process in $\X$, we call $\Psi$ 
a \emph{spatio-temporal FMPP}. 
%
\end{definition}

Note that $\Psi$ either may be treated as a locally finite random subset $\Psi=\{(X_i,L_i,F_i)\}_{i=1}^N\subset\Y$, 
or as a random counting measure 
$$
\Psi(\cdot)
= \sum_{(x,l,f)\in\Psi}\delta_{(x,l,f)}(\cdot)
= \sum_{i=1}^N\delta_{(X_i,L_i,F_i)}(\cdot)
$$
on $(\Y,\B(\Y))$ with \emph{ground measure/process} 
$$
\Psi_{G}(\cdot) = \sum_{x\in\Psi_G}\delta_x(\cdot) 
=\sum_{(x,l,f)\in\Psi}\delta_{(x,l,f)}(\cdot\times\A\times\F)
= \sum_{i=1}^N\delta_{X_i}(\cdot) 
$$
on $(\X,\B(\X))$.
In the spatio-temporal case, it may be convenient to write $\Psi_G=\{(X_i,T_i)\}_{i=1}^N$ to emphasize that each ground process point has a spatial component, $X_i\in\R^{d-1}$, as well as a temporal component $T_i\in\R$.

\begin{rem}
Since all of the underlying spaces are Polish, we may choose a metric $d(\cdot,\cdot)$ on $\Y$ which turns $\Y$ into a complete and separable metric space, with metric topology given by the underlying Polish topology. E.g, we may consider
\beann
d((x_1,l_1,f_1),(x_2,l_2,f_2))
=
\max\{d_{\X}(x_1,x_2),d_{\A}(l_1,l_2),d_{\F}(f_1,f_2)\},
\eeann
where $d_{\X}(x_1,x_2)=\|x_1-x_2\|_d$ and the metrics $d_{\A}(\cdot,\cdot)$ and $d_{\F}(\cdot,\cdot)$ make $\A$ and $\F$ complete and separable metrics spaces \citep{VanLieshout}; when $\A=\R^{k_{\A}}$ or $\A$ is a compact subset of $\R^{k_{\A}}$ we may use $d_{\A}(l_1,l_2)=\|l_1-l_2\|_{k_{\A}}$. In the spatio-temporal case, it may be natural to consider $d_{\X}((x_1,t_1),(x_2,t_2))=\max\{\|x_1-x_2\|_{d-1},|t_1-t_2|\}$, $(x_1,t_1),(x_2,t_2)\in\X\subset\R^{d-1}\times\R=\R^{d}$ \citep{CronieLieshoutSTPP}, which is topologically equivalent to $d_{\X}((x_1,t_1),(x_2,t_2))=\|(x_1,t_1)-(x_2,t_2)\|_d$.
\end{rem}

We will write 
$P(R) = P_{\Psi}(R) = \P(\{\omega\in\Omega:\Psi(\omega)\in R\})$,
$R\in\NN_{lf}$, for the distribution of $\Psi$, i.e.\ the probability measure that $\Psi$ induces on $( N_{lf}, \NN_{lf})$. 
When $\X=\R^d$, for any $\psi\in N_{lf}$ and any $z\in\R^d$, we will write $\psi + z$ to denote 
$\sum_{(x,l,f)\in\psi}\delta_{(x+z,l,f)}$ (or $\{(x+z,l,m):(x,l,m)\in\psi\}$), i.e.\ a shift of $\psi$ in the ground space by the vector $z$.
If $\Psi + z \stackrel{d}{=} \Psi$, i.e.\ $P_{\Psi}(\cdot)=P_{\Psi+z}(\cdot)$, for any $z$, we say that $\Psi$ is {\em stationary}. 
Moreover, $\Psi$ is {\em isotropic} if $\Psi$ is rotation invariant in the ground space, i.e.\ the rotated FMPP $r\Psi=\{(rX_i, L_i, F_i)\}_{i=1}^N$ has the same distribution as $\Psi$ for any rotation $r$.

\subsection{Components of FMPPs}
\label{SectionComponents}
We emphasize that any collection of elements $\{(X_1,L_1,F_1),\ldots,(X_n,L_n,F_n)\}\subset\Psi$, $n\geq1$, consists of the combination of: 
\begin{itemize}
\item a collection of random spatial locations $X_1,\ldots,X_n\in\X$,
\item a collection $L_1,\ldots,L_n$ of random variables taking values in $\A$,
\item an $n$-dimensional random function/stochastic process $\{F_1(t),\ldots,F_n(t)\}_{t\in\T}\in(\R^k)^n$, with realisations in $\F^n$; formally, this is an unordered collection of $n$ stochastic processes in $\R^k$ with sample paths in $\F=\U^k\subset\{f|f:\T\rightarrow\R\}^k$.
\end{itemize}


In particular, $\Psi_{\X\times\A}=\{(X_i,L_i)\}_{i=1}^N$ is a marked point process of the usual kind, with locations in $\R^d$ and marks in $\A\subset\R^{k_{\A}}$, 
i.e.\ 
each auxiliary mark $L_i=(L_{1i},\ldots,L_{k_{\A}i})$ is given by a $k_{\A}$-dimensional random vector. Depending on how $\A$ and the distributions of the $L_i$'s are specified, we are able to consider an array of different settings. E.g., if $\A=\{1,\ldots,k_d\}$, $k_d\geq2$, each random variable 
$L_i$ has a discrete distribution on $\A$.
Since $\Psi_{\X\times\A}$ hereby becomes a multi-type/multivariate point process in $\R^d$, 
one may 
call such FMPPs \emph{multi-type/multivariate} \citep{DVJ1,VanLieshout,Gelfand:etal:10}. In  Appendix~\ref{s:MarkSpaceChoices},
we look closer at specific choices for $\A$. It is often convenient to write $\A=\A_d$ to emphasise when we have a discrete auxiliary mark space, such as $\A_d=\{1,\ldots,k_d\}$, and $\A=\A_c$ to emphasise when have a continuous space ((closure) of an open set), such as $\A_c=\R^{k_{\A}}$.

Within the current definition of FMPPs we may also consider the scenario where the auxiliary marks play no role, and thereby may be ignored. This may be obtained by e.g.\ setting $\A=\{c\}$ for some constant $c\in\R$, 
so that all auxiliary marks attain the value $c$, 
or equivalently, setting $L_i=c$ a.s.\ for any $i=1,\ldots,N$, assuming that $c\in\A$.

Note that when we want to consider functional marks with realisations given by functions $f(t)=(f_1(t),\ldots,f_k(t))\in\R^k$, $t\in\T$, which describe spatial paths, we let $k\geq2$. 
Often the spatial locations $X_i$ describe the initial location of such a path and it is then natural to assume that $d=k\geq2$ and $f(t)\in\X$ a.s.\ for any $t\in\T$. An application here would be that the marks describe movements of animals, living within some spatial domain $\X$; recall Figure \ref{Illu}. 

Recall that each functional mark $F_i(t)=(F_{i1}(t),\ldots,F_{ik}(t))\in\R^k$, $t\in\T\subset[0,\infty)$, $i=1,\ldots,N$, is realised in the measurable space $(\F,\B(\F))$, where $\F=\U^k$, $k\geq1$, and $\U$ are Polish function spaces (products of Polish spaces are Polish). 
By conditioning $\Psi$ on $\Psi_{\X\times\A}$, which includes conditioning on $N$, we obtain the random functional
$$
\Psi|\Psi_{\X\times\A}=\{F_1|\Psi_{\X\times\A},\ldots,F_N|\Psi_{\X\times\A}\}
=\{F_1(t)|\Psi_{\X\times\A},\ldots,F_N(t)|\Psi_{\X\times\A}\}_{t\in\T}\subset\F,
$$
which may be regarded as a stochastic process with dimension $N$ and with the same marginal distributions for all of its components. 
Due to the inherent temporally evolving nature of the functional marks, one may further consider some filtration $\Sigma_{\T}$, and thus obtain a filtered probability space $(\Omega,\Sigma,\Sigma_{\T},\P)$, such that all $F_i=\{F_i(t)\}_{t\in\T}$, $i=1,\ldots,N$, are adapted to $\Sigma_{\T}$ (see Appendix~\ref{s:FunctionSpaces}  for more details). 

\begin{rem}
Formally, $\Psi|\Psi_{\X\times\A}$ may be obtained as the point process generated by the family of regular conditional probabilities obtained by disintegrating $P_{\Psi}$ with respect to the distribution of $\Psi_{\X\times\A}$ on its point pattern space \citep[Appendix A1.5.]{DVJ1}. 

\end{rem}


We impose the Polish assumption on $\U$ in order to carry out the usual marked point process analysis \citep{DVJ1,DVJ2}; note that $\U$ being Polish implies that $\F$ is Polish and $\B(\F)=\B(\U^k)=\B(\U)^k$. 
However, choosing a Polish function space $\U$ is a delicate matter; 
note that \citet{MateuFMPP} did not address this issue. 
In Appendix~\ref{s:FunctionSpaces}, 
we consider functional mark spaces in more detail and there we cover the two most natural choices for $\U$, namely Skorohod spaces and $L_p$-spaces \citep{Billingsley,EthierKurtz,JacodShiryaev,Silvestrov}. 
Note that these two classes of functions are not mutually exclusive. 
Noting that, in general, the support $\supp(f)=\{t\in\T:f(t)\neq0\}\subset\T$ of a function $f\in\F$ need not be given by all of $\T$, 
in some contexts it may be natural to let $\Psi_{\X\times\A}$ govern the supports $\supp(F_i)=\{t\in\T:F_i(t)\neq0\in\R^k\}$, $i=1,\ldots,N$. 
To illustrate this idea, consider the case where $d=1$ and $\X=\T=[0,\infty)$, so that $\Psi_G=\{T_i\}_{i=1}^N\subset[0,\infty)$ is a temporal point process. In addition, assume that $k_{\A}=1$ and that each auxiliary mark $L_i$ is some non-negative random variable, such as an exponentially distributed one, which does not depend on $\Psi_G$. 
Let us think of $T_i$ and $L_i$ as a point's {\em birth time} and {\em lifetime}, respectively. Defining the corresponding {\em death time} as $D_i=T_i+L_i$, 
we may then e.g.\ let 
$$
F_i(t)|\Psi_{\X\times\A}=(F_{i1}(t)|\Psi_{\X\times\A},\ldots,F_{ik}(t)|\Psi_{\X\times\A})=0
$$ 
for all $t\notin[T_i,D_i)$ a.s., where $0$ is the $k$-dimensional vector of $0$s. Note further that there in addition to this may exist $t\in[T_i,D_i)$ such that $F_i(t)|\Psi_{\X\times\A}=0$ in some way (e.g.\ absorption), which is something governed by the distribution of $\{F_i(t)|\Psi_{\X\times\A}\}_{t\in\T}$ on $\F$. 
An explicit construction to obtain this when $k=1$ would e.g.\ be $F_i(t)=\1_{[T_i,D_i)}(t)Y_i((t-T_i)\wedge0)$, $t\in\T$, for some stochastic process $Y(t)$, $t\in[0,\infty)$, which starts in $0$.

\subsection{Reference measures and reference stochastic processes}
\label{SectionRefernceMeasures}
For the purpose of integration, among other things, we need a reference measure on $(\Y,\B(\Y))$. 
We let it be given by the product measure 
\begin{align}
\label{ReferenceMeasure}
\nu(C\times D\times E) 
=& [|\cdot|\otimes\nu_{\M}](C\times(D\times E))
= 
|C| \nu_{\M}(D\times E)
=
|C| [\nu_{\A}\otimes\nu_{\F}]](D\times E)
=
|C|\nu_{\A}(D)\nu_{\F}(E)
,
\end{align}
where $C\times D\times E\in\B(\Y)=\B(\X)\otimes\B(\A)\otimes\B(\F)$ and we note that, as usual, the reference measure on the ground space $\X$ is given by the Lebesgue measure $|\cdot|=|\cdot|_d$ on $\X\subset\R^d$, $d\geq1$. 
Moreover, we need $\nu_{\M}$ to be a finite measure so both $\nu_{\A}$ and $\nu_{\F}$ need to be finite measures on $(\A,\B(\A))$ and $(\F,\B(\F))$, respectively. 

Regarding the reference measure on the auxiliary mark space, in 
Appendix~\ref{s:MarkSpaceChoices} we provide a few examples based on different choices for $\A$. Most noteworthy here is that if $\A=\A_d$ is a discrete space then 
$\nu_{\A}=\nu_{\A_d}$ is a discrete measure $\nu_{\A_d}(\cdot)=\sum_{i\in\A_d}\Delta_i\delta_i(\cdot)$, $\Delta_i\geq0$ (e.g.\ the counting measure, given by $\Delta_i\equiv1$), if $\A=\A_c$ is a continuous 
space 
then 
we may choose $\nu_{\A}=\nu_{\A_c}$ to be the $k_{\A}$-dimensional Lebesgue measure on $\A$, and if $\A$ is unbounded, e.g.\ $\A=\R^{k_{\A}}$, then we may choose $\nu_{\A}$ to be some probability measure. If $\A=\A_d\times\A_c$ is given by a product of a discrete and a continuous space, then $\nu_{\A}$ can be taken to be a product measure $\nu_{\A_d}\otimes\nu_{\A_c}$. 

Turning to the functional mark space $(\F,\B(\F))$, consider some suitable reference random function/stochastic process
\begin{align}
\label{ReferenceProcess}
X^{\F}&=(X_1^{\F},\ldots,X_k^{\F}):(\Omega,\Sigma,\P)\rightarrow(\F,\B(\F))=(\U^k,\B(\U)^k),
\\
\Omega\ni\omega\mapsto X^{\F}(\omega) &= (X_1^{\F}(\omega),\ldots,X_k^{\F}(\omega)) = \{(X_1^{\F}(t;\omega),\ldots,X_k^{\F}(t;\omega))\}_{t\in\T}\in\U^k=\F,
\nn
\end{align}
where each $X^{\F}(\omega)$ is commonly referred to as a sample path/realisation of $X^{\F}$. 
This random element induces a probability measure 
\bea
\label{e:FunctionalReferenceMeasure}
\nu_{\F}(E) = \P(\{\omega\in\Omega : X^{\F}(\omega)\in E\}),
\quad E\in\B(\F),
\eea
on $\F$, which we will employ as our reference measure on $\F$. 
Note that the joint distribution on $(\F^n,\B(\F^n))$ of $n$ independent copies of $X^{\F}$ is given by $\nu_{\F}^n$, the $n$-fold product measure of $\nu_{\F}$ with itself. 
Moreover, if there is a suitable measure $\nu_{\U}$ on $\U$, we let $\nu_{\F}=\nu_{\U}^k$. 
Specifically, 
%
%
$\nu_{\F}$, or $X^{\F}$, 
should be chosen such that suitable absolute continuity 
results can be applied. 
More specifically, the distribution $P_Y$ on $(\F^n,\B(\F^n))$, $n\geq1$, of some stochastic process $Y=\{Y(t)\}_{t\in\T}\in\F^n=(\U^k)^n$ of interest should have some (functional) density/Radon-Nikodym derivative $f_Y$ with respect to $\nu_{\F}^n$, i.e.\ $P_Y(E)=\int_{E}f_Y(f)\nu_{\F}^n(df)=\E_{\nu_{\F}^n}[\1_E f_Y]$, $E\in\B(\F^n)$. 
Note that Kolmogorov's consistency theorem allows us to specify the (abstract) distribution $P_Y$ of $Y$ through its finite dimensional distributions (on $(\R^k)^n$).

In many situations, a natural choice for $\nu_{\F}$ is a Gaussian measure on $\B(\F)$, i.e.\ one corresponding to some Gaussian process $X^{\F}$, or the distribution corresponding to a Markov process $X^{\F}:(\Omega,\Sigma,\P)\rightarrow(\F,\B(\F))$. An often natural choice, which satisfies both of these properties, is the $k$-dimensional standard Brownian motion/Wiener process 
$$
X^{\F}=W=\{W(t)\}_{t\in\T}=\{(W_1(t),\ldots,W_k(t))\}_{t\in\T}\in\F=\U^k
,
$$
which is generated by the corresponding Wiener measure $\mathcal{W}_{\F}$ on $\B(\F)$. In certain cases one speaks of an abstract Wiener space or Cameron-Martin space.
Here issues related to absolute continuity have been extensively studied, and explicit constructions of Radon-Nikodym derivatives involve e.g.\ the Cameron-Martin-Girsanov (change of measure) theorem. 
For discussions, overviews and detailed accounts, see e.g.\ \citet{kallenberg2006foundations,rajput1972gaussian,maniglia2004gaussian,Skorohod} and the references therein.


Note that integration of a measurable function $h$ with respect to $\nu$ satisfies $\int_{\Y}h(x,l,f)\nu(d(x,l,f)) = \int_{\X}\int_{\A}\int_{\F}h(x,l,f)\de x \nu_{\A}(dl)\nu_{\F}(df)  = \int_{\X}\int_{\A}\int_{\U^k}h(x,l,f_1,\ldots,f_k)\de x \nu_{\A}(dl)\nu_{\U}(df_1)\cdots\nu_{\U}(df_k)$; whenever the auxiliary marks are (partially) discrete, the integral over $\A$ is (partially) replaced by a sum.

\section{FMPP examples
}\label{SectionMarkStructures}
The class of FMPPs provides a framework to give structure to a series of existing models and it allows for the construction of new important models and modelling frameworks, which have uses in different applications.  Below we provide some examples of explicit mark structures, which may be considered when constructing FMPP models. In Appendix~\ref{SectionClassesSTCFMPP} we further provide examples of classical point process models which are functional marked and in Appendix \ref{SectionApplications} we provide a few (further) examples of applications.

\subsection{Point processes with real valued marks}\label{SectionMPP}

Besides the fact that $\Psi_{\X\times\A}$ is already a marked point process with real valued marks, letting each $F_i$ a.s.\ take values in the class $\{f:\T\to\R^k \text{ is constant}\}\subset\F=\U^k$ the functional marks are given by the random vectors $F_i(t) \equiv \xi_i\in\R^k$, $t\in\T$, $i=1,\ldots N$, and we may replace $\Psi$ with the marked point process $\bar{\Psi}=\{(X_i,L_i,\xi_i);\, (X_i,L_i)\in\Psi_{\X\times\A}\}\subset\X\times\A\times\R^k$ with real-valued marks $(L_i,\xi_i)$. When $L_i$ is a discrete random variable which describes the point types (recall Section \ref{SectionComponents} and Appendix~\ref{s:AuxiliaryMarkSpace}), $\bar{\Psi}$ is a multi-type point process with $k$-variate real valued marks.

\subsection{Conditionally deterministic functional marks}
\label{s:DeterministicFunctionalMarks}
It may naturally be the case that $\Psi|\Psi_{\X\times\A}$ is not random, i.e.\ 
$\Psi|\Psi_{\X\times\A}=\{f_1,\ldots,f_N\}$ for some given deterministic functions $f_1,\ldots,f_N\in\F$ (obtained by letting the distribution of $\Psi|\Psi_{\X\times\A}$ be given by a product of Dirac masses on $\F$); in Appendix~\ref{s:MarkSpaceChoices}, we look closer at this scenario. 
One example of this is the \emph{growth-interaction process} \citep{Comas,MateuFMPP,Cronie,CronieSarkka,CronieForest,RenshawComas,RenshawComasMateu,RS1,RS2}, which is one of the models having given rise to a substantial part of the ideas underlying the current construction of FMPPs. In Appendix  \ref{SectionGI} we review the growth-interaction process within the setting of FMPPs and indicate some extensions for it. Note further that some of the other modelling frameworks provided below (partially) also fit into this framework.

\subsection{Marking with random closed sets -- geometric interpretation}\label{SectionDeterministicMarks}

We next illustrate how (spatio-temporal) FMPPs may be used to generate (spatio-temporal) point processes marked by random closed sets.

Consider a (spatio-temporal) FMPP $\Psi$ where the spatial locations $X_i$ are located in some subset of $\R^2$ and $k=1$, i.e.\ $\F=\U$, so that $F_i(t)=F_{i1}(t)\in\R$, $t\in\T$. 
In certain settings, such as in the forestry setting, 
one approach to visualising $\Psi$ is obtained by letting the Euclidean disk/ball 
with centre $X_i$ and radius $F_i(t)$, illustrate the space occupied by the $i$th point of $\Psi$ at time $t\in\T$; we here use the convention that 
a ball is empty 
if $r\leq0$. 
Now, consider the following temporally evolving random closed set 
\citep{SKM}:
\begin{align*}
\Xi(t) &= \bigcup_{i=1}^{N} B_{\X}[X_i,F_i(t)]\subset\R^2, 
\quad
t\in\T,
\\
\Xi &= \int_{\T} \Xi(dt)
= \bigcup_{i=1}^{N} \Xi_i = \bigcup_{i=1}^{N} 
\{(x,t)\in\R^2\times\T : F_i(t)>0, \|X_i-x\|\leq F_i(t)\}
.
\end{align*}
We see that whenever $\supp(F_i)$ is a.s.\ bounded, each \emph{deformed cylinder} $\Xi_i$ is a.s.\ a compact subset of $\R^2\times\R=\R^3$ if $\sup_{t\in\T}F_i(t)<\infty$ a.s.. 
We further note that we may consider the marked point processes $\{(X_i,\Xi_i)\}_{i=1}^N$ and $\{(X_i,B_{\X}[X_i,F_i(t)])\}_{i=1}^N$, which are point processes with marks given by random closed sets. Hence, FMPPs provide a way of defining e.g.\ {\em spatio-temporal Boolean models}.
Figure \ref{FigureGeometry} illustrates a realisation of such a spatio-temporal random closed set $\Xi$. 

\begin{figure}[!htbp]
\begin{center}
\includegraphics[width=0.3\textwidth]{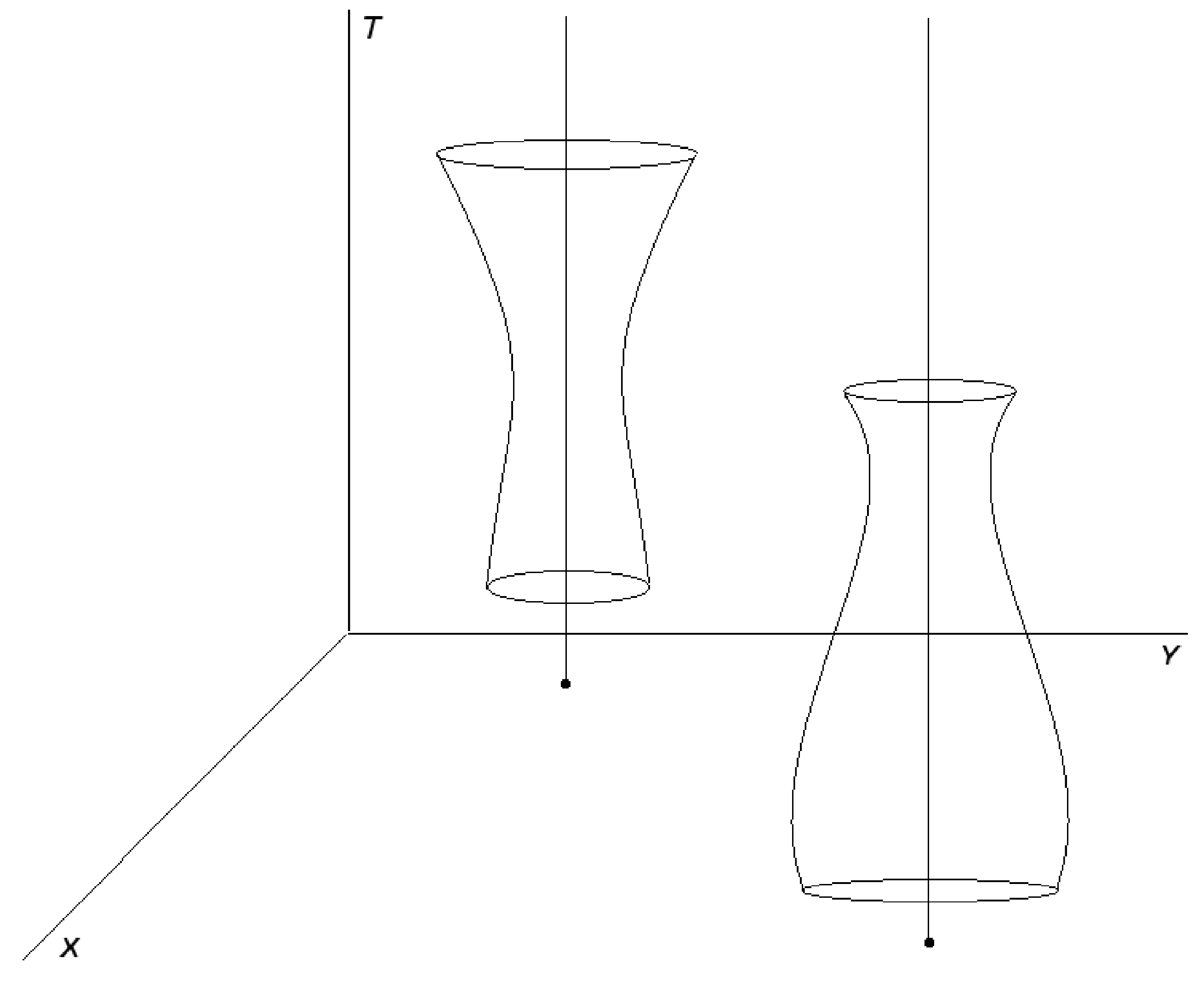}
\caption{An illustration of a realisation of a spatio-temporal random set $\Xi$.}
\label{FigureGeometry}
\end{center}
\end{figure}

The cross section of $\Xi$ at a given time $t$ gives us $\Xi(t)$; 
in the context of e.g.\ forest stand modelling, 
$\Xi(t)$ gives us the geometric representation of the cross section of the forest stand at time $t$, at some given height (usually \emph{breast height}). 
Note, in addition, that when $\X$ is bounded, depending on the form of the functional marks, we may derive geometric properties such as the expected coverage proportion $\frac{\pi}{|\X|}\sum_{n=0}^{\infty}\sum_{i=1}^{n}\E[F_i(t)^2|\Psi_{\X\times\A}]\P(N=n)$ of $\X$ at time $t$ (provided that the disks do not overlap). 

The auxiliary marks may clearly play different roles here. E.g., we may consider a multivariate spatio-temporal random closed set $\Xi$ by setting $\A=\{1,\ldots,k_d\}$, $d\geq2$. 
In addition, recalling the discussion on birth times and lifetimes in Section \ref{SectionComponents}, assume that the ground process $\Psi_G=\{(X_i,T_i)\}_{i=1}^N\subset\R^2\times\T$ is a spatio-temporal point process and that each auxiliary mark $L_i$ is a non-negative random variable. Calling $L_i$ the lifetime and $T_i$ the birth time of the $i$th point, by defining the corresponding death time $D_i=T_i+L_i$ and assuming that $F_i(t)|\Psi_{\X\times\A}=0$ 
for all $t\notin[T_i,D_i)$, we obtain that 
\begin{align*}
\Xi(t) &= \bigcup_{i : t\in[T_i,D_i),  F_i(t)>0} B_{\X}[X_i,F_i(t)], 
\quad
t\in\T,
\\
\Xi &= \bigcup_{i=1}^{N} 
\{(x,t)\in\R^2\times[T_i,D_i) : F_i(t)>0, \|X_i-x\|\leq F_i(t)\}
.
\end{align*}
Note that depending on the assumed supports for the lifetimes (bounded/unbounded), we may also absorb $T_i$ into the auxiliary mark.

\subsection{Spatio-temporal geostatistical marking}
\label{SectionGeostatMarkin}

For a marked point process with real valued marks, one often speaks of {\em geostatistical marking/random field marking}. This is the case where, conditionally on $\Psi_G=\{X_i\}_{i=1}^N$, the associated mark is given by $Z_{X_i}$, $i=1,\ldots,N$, where $Z=\{Z_x\}_{x\in\X}$ is some suitable random field. 
This may be regarded as \emph{sampling the random field $Z$ at  random locations} $\{X_i\}_{i=1}^{N}$. Note that this definition is slightly more general than the definition usually encountered in the literature, where one typically assumes that $Z$ is independent of $\Psi_G$ \citep{Illian,Baddeley:etal:16}. One setting which falls within this more general definition, where there is not necessarily independence between $Z$ and $\Psi_G$, is intensity-weighted marking; for more details see Section~\ref{SectionIntensityDependentMarks} and \citet{HoStoyan}. 

Within the FMPP-context, the idea of geostatistical marking  may be extended to the case where the marks are coming from a spatio-temporal random field $Z=\{Z_{x}(t)\}_{(x,t)\in\X\times\T}$. 
\begin{definition}\label{DefGeostatMarking}
Consider a spatio-temporal $k$-dimensional random field $Z_{x}(t)\in\R^k$, $(x,t)\in\X\times\T$, $k\geq1$. 
If conditionally on $\Psi_{\X\times\A}$ the functional marks of an FMPP $\Psi$ are given by $F_i = \{Z_{X_i}(t)\}_{t\in\T}\in\F=\U^k$, $i=1,\ldots,N$, we say that $\Psi$ has the \emph{spatio-temporal geostatistical marking} property, or that the spatio-temporal random field $Z$ is {\em sampled at random spatial locations}. 
\end{definition}




To provide an example of a model structure where we exploit spatio-temporal geostatistical marking, consider a multi-type spatio-temporal FMPP $\Psi=\{((X_i,T_i),L_i,F_i)\}_{i=1}^N$, with spatio-temporal ground process $\Psi_G=\{(X_i,T_i)\}_{i=1}^N\subset\X\subset\R^d\times[0,\infty)$ and auxiliary marks $L_i=(L_{1i},L_{2i})\in\A=\A_d\times\A_c\subset\R\times\R$, where $L_{1i}$ is a discrete random variable which takes values in $\A_d=\{1,\ldots,k_{d}\}$, $k_{d}\geq2$, and $L_{2i}$ is a continuous random variable with values in $\A_c=[0,\infty)$ (see Appendix~\ref{s:AuxiliaryMarkSpace}  for details on auxiliary mark spaces). 
In addition, viewing $T_i$ and $L_{2i}$ as the birth time and the lifetime of the $i$th point, respectively, define the death time of the $i$th point as $D_i=T_i+L_{2i}$. 
Given a.s.\ non-negative spatio-temporal random fields $Z_x^1(t),\ldots,Z_x^{k_{d}}(t)\in[0,\infty)$, $(x,t)\in\X\times\T$, i.e.\ one for each possible value of $L_{1i}$ (one for each class label), we let 
$$
F_i(t) = \1_{[T_i,D_i)}(t)\sum_{j=1}^{k_{d}}\1\{L_{1i}=j\}Z_{X_i}^j((t-T_i)\wedge0).
$$
In words, we have a population of $k_d$ different species, where for specie $j\in\{1,\ldots,k_{d}\}$, 
\begin{itemize}
\item the space-time locations are given by $\{(X_i,T_i):(X_i,T_i,(L_{1i},L_{2i}))\in\Psi_{\X\times\A}\cap\X\times\{j\}\times\A_c\}\subset\X$,
\item the size, i.e.\ the functional mark, of individual $i$ of specie $j$ is given by $F_i(t) = \1_{[T_i,D_i)}(t)Z_{X_i}^j((t-T_i)\wedge0)$, where the birth time $T_i$ determines when its size starts changing and its death time $D_i=T_i+L_{2i}$ determines when its size becomes $0$ again.
\end{itemize}
To exemplify further, in the forestry context, $Z_x^j(t)$, $(x,t)\in\X\times\T$, could model the height/diameter at breast hight of the trees of specie $j$.


%
\subsubsection{Spatio-temporal geostatistical prediction with sampling location errors}
When observations have been made of a spatio-temporal functional process, at a set of fixed known spatial locations $x_i \in \X$, $i=1,\ldots,n$, one often speaks of \emph{geostatistical functional data}. 
More specifically, given some underlying spatio-temporal random field/functional process 
$
\left\{Z_{x}(t):\ x\in \X\subset\R^{d}, t \in\T\right\},
$
we assume that we observe a set of functions, or rather spatially located curves $\left(Z_{x_{1}}(t),\ldots,Z_{x_{n}}(t)\right)$, obtained by sampling $Z$ at locations $x_i \in \X$, $i=1,\ldots,n$, for $t \in\T=[a,b]$, which define the set of functional observations. Each function is assumed to belong to $\F=\U=L_2(\T)$. The class of related data types comprise a broad family of spatially dependent functional data. 
For a good account on these types of data, the reader is referred to 
\citet{DelicadoGiraldoComas,GiraldoDelicadoMateu2010,GiraldoDelicadoMateu2011} and the references therein. 

Consider now the scenario where one would perform some geostatistical analysis 
within the setting described above when, in addition, there is uncertainty in the monitoring locations $x_i$, $i=1,\ldots,n$. This positioning error may be the result of imprecise positioning instruments, positional coordinate rounding or human error, e.g.\ map reading \citep{cressie:kornak:03}. 
In the purely spatial setting and in the case of a random field $Z$ sampled at randomly perturbed locations, geostatistical inference has been treated by e.g.\ \citet{chiles:delfiner:12,cressie:kornak:03} to some extent; \citet{cressie:kornak:03} use the terms {\em coordinate positioning model} and {\em feature positioning model}.
Note that one here samples the random field/spatial functional process $Z$ at the spatial locations $X_i=x_i+\varepsilon_i$, $i=1,\ldots,n$, where 
$\varepsilon_i$ is a $d$-dimensional random vector \citep{chiles:delfiner:12}.
When each $\varepsilon_i=\varepsilon(x_i)$ is generated through some random error field $\varepsilon(x)$, $x\in\X$, the locations $x_i$ may be dependently perturbed, whereby the sampling locations $X_i$ become spatially dependent; if $\varepsilon(x)$, $x\in\X$, is given by a white noise field, then the locations $x_i$ become independently perturbed.
We see that $\Psi_G=\{X_i\}_{i=1}^n$ constitutes a spatial point process with a fixed number of points $n$; recall that binomial point processes and simple sequential inhibition processes are examples of point processes with predetermined total point counts \citep{VanLieshout}.
Now, an FMPP is obtained by assigning $F_i=\{Z_{X_i} (t)\}$, $t\in\T$, to $X_i\in\Psi_G$ as functional mark. 
%
%
%
Consequently, the geostatistical framework could be extended to incorporate such randomness in the sampling locations. 
\citet{GiraldoDelicadoMateu2010} treat the deterministic case, i.e.\ $\varepsilon_i\equiv0$, and consider the estimator 
$\widetilde{Z_{x_0}}(t)=\widetilde{Z_{x_0}}(t|x_1,\ldots,x_n;\lambda)=\sum_{i=1}^{n}\lambda(x_i,t) Z_{x_i}(t)$, where $\lambda:\X\times\T\rightarrow\R$ belongs to $L_2(\X\times\T)$, for prediction of the marginal random process $\{Z_{x_0}(t)\}_{t\in\T}$, $x_0\in\X$.
Assuming that the locations are in fact random, we obtain the predictor $\widehat{Z_{x_0}}(t)=\sum_{i=1}^{n}\lambda(X_i,t) Z_{X_i}(t)=\sum_{i=1}^{n}\lambda(X_i,t) F_i(t)$ and the associated prediction problem may now be expressed as minimising \citep{GiraldoDelicadoMateu2010}
\begin{align*}
\lambda&\mapsto\E\left[\int_{\T}(\widehat{Z_{x_0}}(t)-Z_{x_0}(t))^2 \de t \right]
=
\E\left[\int_{\T}\left(\sum_{i=1}^{n}\lambda(X_i,t) Z_{X_i}(t) - Z_{x_0}(t)\right)^2 \de t \right]
\\
&=
\int_{\X^n}
\E^{v_1,\ldots,v_n}
\left[
\int_{\T}
\left(
\widetilde{Z_{x_0}}(t|v_1,\ldots,v_n;\lambda)
- Z_{x_0}(t)\right)^2 
\de t
\right]
\rho_G^{(n)}(v_1,\ldots,v_n)\de v_1\cdots\de v_n
\end{align*}
with respect to $\lambda:\X\times\T\rightarrow\R$ in $L_2(\X\times\T)$ such that $\sum_{i=1}^n \lambda(x_i+\varepsilon_i,t)=1$ for all $t\in \T$. This follows by the Campbell-Mecke formula and Fubini's theorem. Here $\rho_G^{(n)}$ is the $n$th product density of the ground process $\Psi_G=\{X_i\}_{i=1}^n$ and $\E^{x_1,\ldots,x_n}[\cdot]$ denotes expectation under the $n$-point Palm distribution of $\Psi_G$ (see Section \ref{nFoldPalm}). We interpret $\lambda\mapsto\E^{v_1,\ldots,v_n}
[\int_{\T}(\widetilde{Z_{x_0}}(t|v_1,\ldots,v_n;\lambda)-Z_{x_0}(t))^2 \de t]$ as the function to be minimised under deterministic sampling ($\varepsilon_i\equiv0$), when the spatial sampling locations are given by $v_1,\ldots,v_n\in\X$; we weight this by $\rho_G^{(n)}(v_1,\ldots,v_n)\de v_1\cdots\de v_n$, which may be interpreted as the infinitesimal probability that $\Psi_G$ has points at $v_1,\ldots,v_n$. 


%
%
 

\subsection{Constructed functional marks
}\label{lisa}
Another important class of marks are {\em constructed
marks} which, paraphrasing \citet{Illian}, are marks reflecting the geometries of point configurations in neighbourhoods of the individual points. In particular, they are sometimes used to identify points that are different from the normal points in a point pattern \citep{Illian,StoyanStoyan}. Constructed marks are either numerical or functional and here we consider {\em constructed functional  marks} (CFMs); for further details on constructed numerical marks, see e.g.\ \citet{Illian}. 

A broad class of  CFMs can be obtained by using the idea of LISA ({\em Local Indicators of Spatial Association}) functions. Formally, a LISA function for a point $X_i\in\Psi_G$ is a statistic which describes local dependence with respect to $X_i$. Explicitly, LISA functions, which may be incorporated as functional marks, are constructed through a function $S(\cdot)$ such that $S(h,X_i;\Psi_G\setminus\{X_i\})=F_{i}(h)$, $h\in\T=[0,\infty)$, where $F_i$ (possibly with additional parameters) has sample paths in $\F=\U$. Loosely speaking, $h$ is a given distance which specifies which points $X_j\in\Psi_G\setminus\{X_i\}$ should be included in an $h$-neighbourhood of $X_i$, in order to determine the local $h$-distance dependence \citep{anselin:95}. 

In the context of spatial point processes, \citet{Getis:Franklin:87}
used a local version of a Ripley $K$-function estimator, i.e.\ an estimator of the individual $K$-function at point $X_i\in\Psi_G$, given  by $F_i(h)=K_{X_i}(h)=\Psi_G(B_{\X}[X_i,h]\setminus\{X_i\})=\Psi_G(B_{\X}[X_i,h])-1$, to show that points can exhibit different behaviours when examined at different scales of analysis. 
\citet{CollinsCressie} developed {\em second order product density LISA functions} to examine the behaviour of the individual points in a point pattern in terms of their relation to the neighbouring points at several scales simultaneously. This allows for identifying points with similar neighbourhood structures. These two are examples of CFMs that can be attached to points of a point process to turn it into an FMPP. For more examples of CFMs  in terms of LISA functions, see \citet{Illian} and the references therein.

\subsection{Intensity-dependent marks}
\label{SectionIntensityDependentMarks}
A step forward in the marking of stationary unmarked point processes is to allow the distributions of
the marks to be dependent on the local intensity, as suggested by \citet{HoStoyan,MyllymakiPenttinen} in the context of stationary log-Gaussian Cox processes \citep{MollerSyversveen,Moller}.  This intensity-dependent marking assumes conditional independence to hold for the marks, given the random intensity. 
Heuristically, these models allow the marks to be large/small in areas of low/high point intensity and small/large in areas of high/low point intensity. For instance, in forest stands, where there is spatial competition for resources, small marks would mean that many trees are located close to each other. 
For log-Gaussian Cox processes, intensity-dependent marking leads to a correlation of the marks which is affected by the second order properties of the unmarked Cox process $\Psi_G$. 
The set-up developed in \citet{MyllymakiPenttinen} allows the mean and the variance of the mark distribution to be affected by the local intensity, and this setup has been employed for the marking of log Gaussian Cox processes. 
Here one may test for mark independence as well as for independence between marks and locations \citep{Grabarnik2011,Schlather2004}.

For a spatio-temporal point process $\Psi_G=\{(X_i,T_i)\}_{i=1}^N$ with intensity $\rho_G(\cdot)$ (see Section \ref{SectionPointProcessCharacteristics}), in the current FMPP context we may extend these ideas as follows. 
\begin{definition}
A spatio-temporal FMPP $\Psi$ with ground process $\Psi_G=\{(X_i,T_i)\}_{i=1}^N\subset\X\subset\R^{d-1}\times\T$, $d\geq2$, with intensity $\rho_G(\cdot)$, is said to have \emph{spatio-temporal intensity-dependent marks} if, conditionally on $\Psi_G$ and the auxiliary marks, the functional marks $F_i(t)$, $t\in\T$, $i=1,\ldots,N$, are given as functions $t\mapsto h(\rho_G(X_i,t))$, $t\in\T$, $i=1,\ldots,N$, for some (random) function $h:\R\to\R$.
\end{definition} 

For instance, we may have
$$
F_i(t)|\Psi_{\X\times\A}=a+b\rho_G(X_i,t)+\varepsilon(X_i,t),
\quad a,b\in\R,
$$
where $\varepsilon(x,t)$ is a spatio-temporal zero mean Gaussian noise process. This can also be seen as an example of geostatistical marking. Further, note that spatio-temporal intensity-dependent marking falls in the category of conditionally deterministic functional marks 
if the function $h(\cdot)$ is deterministic. 


\section{Moment characteristics for FMPPs}
\label{SectionPointProcessCharacteristics}

Besides illustrating the connections above, the aim of this paper is to consider different statistical approaches which allow us to analyse point pattern data with functional marks. 
For a wide range of summary statistics, the core elements are intensity functions and higher order product density functions. We next consider product densities and intensity reweighted product densities 
for FMPPs. 
In Appendix \ref{s:MarkSpaceChoices} we look closer at what these entities look like under various auxiliary and functional mark space choices.




\subsection{Product densities and intensity functionals}
%

Let $\Psi$ be an FMPP with ground process $\Psi_G$. Given some $n\geq1$ and some measurable functional $h:\Y^n=\X^n\times\A^n\times\F^n\rightarrow[0,\infty)$, 
consider
\begin{align}
\label{e:StoyanMeasure}
\alpha_h^{(n)}
&=
\E\left[
\sum\nolimits_{(x_1,l_1,f_1),\ldots,(x_n,l_n,f_n)\in\Psi}^{\neq}
h((x_1,l_1,f_1),\ldots,(x_n,l_n,f_n))
\right]
.
\end{align}
Here $\sum^{\neq}$ denotes summation over distinct $n$-tuples. We first note that the $n$th order factorial moment measure $\alpha^{(n)}(A_1\times\cdots\times A_n)$ of $\Psi$ is retrieved by letting $h$ be the indicator function for the set $A_1\times\cdots\times A_n = (C_1\times D_1\times E_1)\times\cdots\times(C_n\times D_n\times E_n)\in\B(\Y^n) = \B(\X\times\M)^n=\B(\X\times\A\times\F)^n$. Note further that $\alpha^{(n)}$ coincides with the $n$th order \emph{moment measure}  
\(
\mu^{(n)}(A_1\times\cdots\times A_n) = \E[\Psi(A_1)\cdots\Psi(A_n)]
\)
when $A_1,\ldots,A_n\in\B(\Y)$ are disjoint.

Assume next that the $n$th order \emph{(functional) product density} $\rho^{(n)}$, i.e.\ the Radon-Nikodym derivative of $\alpha^{(n)}$ with respect to the $n$-fold product of the reference measure $\nu$ in (\ref{ReferenceMeasure}) with itself, exists. We have that $\alpha^{(n)}$ and $\rho^{(n)}$ satisfy the following {\em Campbell formula} \citep{SKM}:
\begin{align}
\label{Campbell}
&
\alpha_h^{(n)}
=
\int_{\X\times\A\times\F}\cdots\int_{\X\times\A\times\F} h((x_1,l_1,f_1),\ldots,(x_n,l_n,f_n)) 
\alpha^{(n)}(d((x_1,l_1,f_1),\ldots,(x_n,l_n,f_n)))
\\
&=
\int_{\X\times\A\times\F}\cdots\int_{\X\times\A\times\F} h((x_1,l_1,f_1),\ldots,(x_n,l_n,f_n)) 
\rho^{(n)}((x_1,l_1,f_1),\ldots,(x_n,l_n,f_n)) \prod_{i=1}^{n}
\underbrace{\de x_i\nu_{\A}(dl_i)\nu_{\F}(df_i)}_{=\nu(dx_i\times dl_i\times df_i)}
.
\nn
\end{align}
Heuristically, 
$\rho^{(n)}((x_1,l_1,f_1),\ldots,(x_n,l_n,f_n)) \prod_{i=1}^{n} \nu(d(x_i,l_i,f_i))$ is interpreted as the probability of having ground process points in the infinitesimal neighbourhoods $dx_1,\ldots,dx_n\subset\X$ of $x_1,\ldots,x_n$, with associated marks belonging to the infinitesimal neighbourhoods $d(l_1,f_1),\ldots,d(l_n,f_n)\subset\A\times\F$ of the mark locations $(l_1,f_1),\ldots,(l_n,f_n)$.

Turning to the ground process $\Psi_{G}$, 
through $\alpha^{(n)}$ we may define the $n$th order \emph{ground factorial moment measure} $\alpha_{G}^{(n)}(\cdot)=\alpha^{(n)}(\cdot\times\A\times\F)$ and its Radon-Nikodym derivative $\rho_{G}^{(n)}$ with respect to the $n$-fold product $|\cdot|^n$ of the Lebesgue measure $|\cdot|$ with itself, which is called the $n$th order \emph{ground product density}. 
Note that by letting the function $h$ in \eqref{Campbell} be a function on $\X$ only, we obtain a Campbell formula for the ground process $\Psi_G$. 
Moreover, by the existence of $\rho_{G}^{(n)}$ and $\rho^{(n)}$, it follows that \citep{Heinrich2013}
\begin{align}
\label{ProductDensityCFMPP}
\rho^{(n)}((x_1,l_1,f_1),\ldots,(x_n,l_n,f_n)) 
&= 
Q_{x_1,\ldots,x_n}^{\M}((l_1,f_1),\ldots,(l_n,f_n)) 
\rho_{G}^{(n)}(x_1,\ldots,x_n)
\\
&=
Q_{(x_1,l_1),\ldots,(x_n,l_n)}^{\F}(f_1,\ldots,f_n)
Q_{x_1,\ldots,x_n}^{\A}(l_1,\ldots,l_n)
\rho_{G}^{(n)}(x_1,\ldots,x_n)
,
\nn
\end{align}
where 
\bea
\label{AuxMarkDensities}
Q_{x_1,\ldots,x_n}^{\A}:\A^n\to[0,\infty) 
,
&&
x_1,\ldots,x_n\in\X
,
\\
\label{FunMarkDensities}
Q_{(x_1,l_1),\ldots,(x_n,l_n)}^{\F}:\F^n=(\U^k)^n\to[0,\infty),
&&
(x_1,l_1),\ldots,(x_n,l_n)\in\X\times\A
,
\eea
are densities of the families  
\begin{align}
\label{AuxiliaryMarkDistributions}
P_{x_1,\ldots,x_n}^{\A}(D_1\times\cdots\times D_n)
&=\int_{D_1\times\cdots\times D_n}
Q_{x_1,\ldots,x_n}^{\A}(l_1,\ldots,l_n)
\nu_{\A}(d l_1)\cdots\nu_{\A}(d l_n)
,
\\
\label{FunctionalMarkDistributions}
P_{(x_1,l_1),\ldots,(x_n,l_n)}^{\F}(E_1\times\cdots\times E_n)
&=
\int_{E_1\times\cdots\times E_n}
Q_{(x_1,l_1),\ldots,(x_n,l_n)}^{\F}(f_1,\ldots,f_n)
\prod_{i=1}^{n}\nu_{\F}(d f_i)
,
\end{align}
$(D_1\times E_1),\ldots,(D_n\times E_n)\in\B(\M)=\B(\A\times\F)$, of (regular) conditional probability distributions. 
We interpret $Q_{x_1,\ldots,x_n}^{\A}(\cdot)$ as the density of the conditional joint probability distribution 
of $n$ auxiliary marks in $\A$, given that $\Psi$ indeed has $n$ points at the locations $x_1,\ldots,x_n\in\X$. 
Similarly, $Q_{(x_1,l_1),\ldots,(x_n,l_n)}^{\F}(\cdot)$ is interpreted as the density of the conditional joint probability distribution  
of $n$ functional marks in $\F$, given that $\Psi_G$ has points at the $n$ locations $x_1,\ldots,x_n\in\X$ with attached auxiliary marks $l_1,\ldots,l_n\in\A$. 
Recalling Sections \ref{SectionComponents} and \ref{SectionRefernceMeasures}, 
we see that $P_{(x_1,l_1),\ldots,(x_n,l_n)}^{\F}(\cdot)$ represents the probability distribution on $(\F^n,\B(\F^n))$ of $n$ components of $\Psi|\Psi_{\X\times\A}=\{F_1|\Psi_{\X\times\A},\ldots,F_N|\Psi_{\X\times\A}\}$, which may be seen as an $n$-dimensional random function/stochastic process 
$\{F_1(t)|\Psi_{\X\times\A},\ldots,F_n(t)|\Psi_{\X\times\A}\}_{t\in\T}\subset\F$. 
This distribution is absolutely continuous with respect to the reference measure $\nu_{\F}^n$, i.e.\ the distribution of an $n$-dimensional version of the reference process $X^{\F}$, with density given by (\ref{FunMarkDensities}). 
Note that $\rho^{(n)}$ is (partly) a functional since one of its component, $Q_{(x_1,l_1),\ldots,(x_n,l_n)}^{\F}(\cdot)$, is a functional; 
here, we use the term 'functional' for any mapping which 
takes a function as one of its arguments.
The two regular probability distribution families \eqref{AuxiliaryMarkDistributions} and \eqref{FunctionalMarkDistributions} constitute the so-called $n$-point mark distributions \citep{SKM}:
\begin{align*}
&P_{x_1,\ldots,x_n}^{\M}((D_1\times E_1)\times\cdots\times(D_n\times E_n))
=\int_{D_1\times\cdots\times D_n}
P_{(x_1,l_1),\ldots,(x_n,l_n)}^{\F}(E_1\times\cdots\times E_n)
P_{x_1,\ldots,x_n}^{\A}(d(l_1,\ldots,l_n))
\\
&=\int_{(D_1\times E_1)\times\cdots\times(D_n\times E_n)} Q_{x_1,\ldots,x_n}^{\M}((l_1,f_1),\ldots,(l_n,f_n))
\prod_{i=1}^{n}
\nu_{\A}(dl_i)\nu_{\F}(df_i)
\nonumber
.
\end{align*}

The \emph{intensity measure} is given by $\mu(A) = \mu^{(1)}(A) = \alpha^{(1)}(A) = \E[\Psi(A)]$, $A=C\times D\times E\in\B(\Y)$, and since $\rho^{(1)}$ exists, 
\begin{align}
\label{IntensityMeasure}
\mu(A)
= \int_{C\times D\times E}\rho^{(1)}(x,l,f) \de x\nu_{\A}(d l)\nu_{\F}(d f)
= \int_{C\times D\times E}
Q_{(x,l)}^{\F}(f)
Q_{x}^{\A}(l)
\rho_G(x) \de x\nu_{\A}(d l)\nu_{\F}(d f)
\end{align}
and we refer to $$
\rho(x,l,f) = \rho^{(1)}(x,l,f) = Q_{(x,l)}^{\F}(f) Q_{x}^{\A}(l) \rho_G(x)
$$ 
as the {\em intensity functional} of the FMPP $\Psi$. Here $\rho_{G}(\cdot)=\rho_{G}^{(1)}(\cdot)$ is the intensity of the ground process, $\Psi_G$. 

We finally point out that $\rho_G^{(n)}$ and $\rho^{(n)}$ are in fact the intensity function and the intensity functional of the point processes 
\begin{align}
\label{FactorialProcess}
\Psi_G^{n\neq}
&=
\{(x_1,\ldots,x_n)\in\Psi^n : x_i\neq x_j \text{ if } i\neq j\}\subset \X^n,
\\
\Psi^{n\neq}&=\{((x_1,l_1,f_1),\ldots,(x_n,l_n,f_n))\in\Psi^n : (x_i,l_i,f_i)\neq(x_j,l_j,f_j) \text{ if } i\neq j\}
\nonumber
\\
&\stackrel{a.s.}{=}
\{((x_1,l_1,f_1),\ldots,(x_n,l_n,f_n))\in\Psi^n : x_i\neq x_j \text{ if } i\neq j\}\subset (\X\times\A\times\F)^n
,
\nonumber
\end{align}
respectively; the last equality follows since $\Psi$ is a marked point process.

%



\subsection{Correlation functionals}
\label{s:CorrelationFunctionals}
Pair correlation functions, which are not in fact correlations in the usual sense, are valuable tools for studying second order dependence properties of point processes. These may be generalised to arbitrary orders $n\geq2$ to characterise $n$-point interactions between the points of a point process, and here in the FMPP context we will refer to them as correlation functionals.
Assuming that $\rho$ and $\rho^{(n)}$, $n\geq1$, exist, the $n$th order {\em correlation functional} is defined as
\begin{align}
\label{nPointCorrelationFunction}
g_{\Psi}^{(n)}((x_1,l_1,f_1),\ldots,(x_n,l_n,f_n)) 
&=
\frac{\rho^{(n)}((x_1,l_1,f_1),\ldots,(x_n,l_n,f_n))}
{\rho(x_1,l_1,f_1)\cdots\rho(x_n,l_n,f_n)}
\\
&=
\gamma_{x_1,\ldots,x_n}^{\M}
((l_1,f_1),\ldots,(l_n,f_n))
g_{G}^{(n)}(x_1,\ldots,x_n),
\nn
\end{align}
where 
\begin{align}
\label{nPointMarkDensityRatios}
\gamma_{x_1,\ldots,x_n}^{\M}
((l_1,f_1),\ldots,(l_n,f_n))
&=\gamma_{(x_1,l_1),\ldots,(x_n,l_n)}^{\F}
(f_1,\ldots,f_n)
\gamma_{x_1,\ldots,x_n}^{\A}
(l_1,\ldots,l_n)
,
\\
\gamma_{x_1,\ldots,x_n}^{\A}
(l_1,\ldots,l_n)
&=
\frac{Q_{x_1,\ldots,x_n}^{\A}(l_1,\ldots,l_n)}
{Q_{x_1}^{\A}(l_1)\cdots Q_{x_n}^{\A}(l_n)},
\nonumber
\\
\gamma_{(x_1,l_1),\ldots,(x_n,l_n)}^{\F}
(f_1,\ldots,f_n)
&=
\frac{Q_{(x_1,l_1),\ldots,(x_n,l_n)}^{\F}(f_1,\ldots,f_n)}
{Q_{(x_1,l_1)}^{\F}(f_1)\cdots Q_{(x_n,l_n)}^{\F}(f_n)}
\nonumber
\end{align}
and
\[
g_{G}^{(n)}(x_1,\ldots,x_n)=
\frac{\rho_{G}^{(n)}(x_1,\ldots,x_n)}
{\rho_{G}(x_1)\cdots\rho_{G}(x_n)}
\]
is the $n$th order {\em correlation function} of the ground process, $\Psi_G$. 
Note that $\gamma_{(x_1,l_1),\ldots,(x_n,l_n)}^{\F}(\cdot)$ represents the conditional joint density of $n$ functional marks, given their associated locations and auxiliary marks, divided by the conditional marginal densities of these functional marks, given  their corresponding associated locations and auxiliary marks. An analogous interpretation holds for the second term, but then regarding the auxiliary marks instead and conditioned only on the locations. 
The particular case $n=2$, i.e., 
$
g_{\Psi}^{(2)}((x_1,l_1,f_1),(x_2,l_2,f_2)) 
= 
\gamma_{x_1,x_2}^{\M}
((l_1,f_1),(l_2,f_2))
g_{G}^{(2)}(x_1,x_2)
, 
$ 
is referred to as the \emph{pair correlation functional (pcf)} and we note that $g_G^{(2)}(x_1,x_2)=\rho_G^{(2)}(x_1,x_2)/(\rho_G(x_1)\rho_G(x_2))$ is the pair correlation function of the ground process \citep{BaddeleyEtAl,SKM}. 
When $n=2$, the first term on the right hand side in \eqref{nPointCorrelationFunction} may be expressed as $\gamma_{x_1,x_2}^{\A}
(l_1,l_2)Q_{(x_1,l_1),(x_2,l_2)}^{\F}(f_1|f_2)/Q_{(x_1,l_1)}^{\F}(f_1)$, where $Q_{(x_1,l_1),(x_2,l_2)}^{\F}(f_1|f_2)$ represents a conditional density on $\F$ of one functional mark, $F_1$, given another functional mark, $F_2$, as well as the associated locations and auxiliary marks. 

\section{FMPP model structures}
\label{SectionIndependentMarks}

We next look closer at a few structural distributional assumptions and model structures for FMPPs. 
In the context of the auxiliary marks we have already highlighted some effects of imposing different independence assumptions on the marks. 
Here, we mainly focus on two assumptions which will play a role in the statistical analysis: common marginal mark distributions and (location-dependent) independent marking. 
In Appendix~\ref{SectionClassesSTCFMPP} we further provide a few different functional marked classical point process models.

\subsection{Common mark distributions}
An assumption which may be realistic in a variety of different contexts is that the marks are not necessarily independent but they have the same marginal distributions. We next look closer at this setting and we note that the statements below should be understood in an almost everywhere (a.e.) setting.

\begin{definition}
Let $\Psi$ be an FMPP with ground process $\Psi_G$ and consider the following scenarios, defined conditionally on $\Psi_G$. 
\begin{itemize}
\item $\Psi$ has a {\em common (marginal) mark distribution}: The marginal 1-dimensional distributions of all marks $(L_i,F_i)$, $i=1,\ldots,N$, are the same, i.e.\ they do not depend on the spatial locations. Here the 1-point mark distributions 
$P_{x}^{\M}(D\times E) = \int_{D} P_{(x,l)}^{\F}(E) P_{x}^{\A}(dl)$, $x\in\X$, 
$D\times E\in\A\times\F$, 
satisfy
$$
P_{x}^{\M}(D\times E) 
\equiv 
P^{\M}(D\times E)
=
\int_{D\times E} Q^{\M}(l,f) \nu_{\M}(d(l,f))
=
\int_{D\times E}
Q_{l}^{\F}(f) Q^{\A}(l) \nu_{\A}(dl)\nu_{\F}(df),
$$
for some probability measure $P^{\M}(D\times E)$, which has density $Q^{\M}(l,f)=Q_{l}^{\F}(f) Q^{\A}(l)$ 
with respect to $\nu_{\M}=\nu_{\A}\otimes\nu_{\F}$. 
This is e.g.\ the case when $\Psi$ is {\em stationary} \citep[Thm 3.5.1.]{SchneiderWeil}; $P^{\M}(\cdot)$ is then commonly referred to as {\em the} mark distribution. 

\item $\Psi$ has a {\em common (marginal) functional mark distribution}: Each $F_i|\Psi_{\X\times\A}\in\Psi|\Psi_{\X\times\A}$, $i=1,\ldots,N$, has the same marginal distribution on $(\F,\B(\F))$, which neither depends on its spatial location nor its auxiliary mark. 
Here $P_{(x,l)}^{\F}\equiv P^{\F}$ and $Q_{(x,l)}^{\F}\equiv Q^{\F}$, $(x,l)\in\X\times\A$.

\end{itemize}

\end{definition}


Under the assumption of a common mark distribution, it may further be the case that the common mark distribution $P^{\M}$ coincides with the reference measure  $\nu_{\M}=\nu_{\A}\otimes\nu_{\F}$ (so $\nu_{\A}$ and $\nu_{\F}$ must be probability measures), which implies that $Q^{\M}(l,f) = Q_{l}^{\F}(f) Q^{\A}(l)\equiv1$ and the correlation functionals satisfy
\begin{align}
\label{e:CorrelationFunctionalsCommonMark}
g_{\Psi}^{(n)}((x_1,l_1,f_1),\ldots,(x_n,l_n,f_n)) 
&= 
Q_{(x_1,l_1),\ldots,(x_n,l_n)}^{\F}(f_1,\ldots,f_n)
Q_{x_1,\ldots,x_n}^{\A}(l_1,\ldots,l_n)
g_{G}^{(n)}(x_1,\ldots,x_n)
.
\end{align}
E.g., $\nu_{\A}$ may be a Bernoulli distribution with parameter $p\in[0,1]$ and $\A=\A_d=\{0,1\}$, and $\nu_{\F}$ a Wiener measure $\mathcal{W}_{\F}$, whereby (marginally) $L_i$ is a Bernoulli random variable and $F_i$ is a Brownian motion, which are independent of each other.

Under the weaker assumption that $\Psi$ has a common functional mark distribution, 
recalling the reference process $X^{\F}$ in (\ref{ReferenceProcess}), which has $\nu_{\F}$ as distribution, when additionally $P^{\F}=\nu_{\F}$ we here obtain that, marginally, each component $F_i|\Psi_{\X\times\A}$, $i=1,\ldots,N$, has the same distribution as $X^{\F}$. 
%
To provide an example for this setting, note e.g.\ that for the (stochastic) growth-interaction model, conditionally on $N=1$, i.e.\ $\Psi=\{(X_1,L_1,F_1)\}$, we have that the distribution of $F_1|\Psi_{\X\times\A}=\{F_1(t)|(X_1,L_1)\}_{t\in\T}$ does not change with $(X_1,L_1)$.



\begin{rem}
Note that when $\Psi$ has a common functional mark distribution we do not necessarily assume that there is a {\em common (marginal) auxiliary mark distribution}, i.e.\ that $\Psi_{\X\times\A}$ has a common mark distribution. Under such an assumption, all $L_i|\Psi_G$, $i=1,\ldots,N$, have the same marginal distributions, which do not depend on the spatial locations, whereby $P_{x}^{\A}\equiv P^{\A}$ and $Q_{x}^{\A}\equiv Q^{\A}$, $x\in\X$.
%
Hence, if there is a common auxiliary mark distribution as well as a common functional mark distribution, it follows that $P_{x}^{\M}(D\times E) \equiv P^{\M}(D\times E)=P^{\F}(E)P^{\A}(D)$, $D\times E\in\A\times\F$, $x\in\X$, i.e.\ $L_i$ and $F_i$ are conditionally independent for any $i=1,\ldots,N$. This is a stronger assumption than the assumption of a common mark distribution and it holds e.g.\ when $P^{\M}=\nu_{\M}=\nu_{\A}\otimes\nu_{\F}$.

\end{rem}

\subsection{Location-dependent independent marking and random labelling}
\label{s:IndependentMarking}
We next turn to two common notions of mark independence: location-dependent independent marking and random labelling. 

\begin{definition}
We say that an FMPP $\Psi$ is {\em (location-dependent) independently marked} if, conditional on its ground process $\Psi_G$, all marks $(L_i, F_i)$, $i=1,\ldots,N$, are independent but not necessarily identically distributed \citep[Definition 6.4.III]{DVJ1}. 


By further adding the assumption of a common marginal mark distribution to 
independent marking, 
so that the marks become independent and identically distributed as well as independent of the ground process $\Psi_G$, we obtain the definition of {\em random labelling}.
\end{definition}

Hereinafter, we will use the shorter term 'independent marking', thus leaving out the part 'location-dependent', in keeping with \citet{DVJ1}. 
Under 
independent marking, 
each mark $(L_i, F_i)$ may depend on its associated spatial location and it follows that
\begin{align}
\label{e:IndependentMarking}
P_{x_1,\ldots,x_n}^{\M}((D_1\times E_1)\times\cdots\times(D_n\times E_n))
&=
\prod_{i=1}^n
P_{x_i}^{\M}(D_i\times E_i)
=
\prod_{i=1}^n
\int_{D_i}
P_{(x_i,l_i)}^{\F}(E_i)
P_{x_i}^{\A}(d l_i)
\\
&=\int_{D_1\times E_1}
\cdots
\int_{D_n\times E_n}
\prod_{i=1}^n
\underbrace{
Q_{(x_i,l_i)}^{\F}(f_i)
Q_{x_i}^{\A}(l_i)
}_{=Q_{x_i}^{\M}(l_i,f_i)}
\nu_{\A}(dl_i)\nu_{\F}(df_i)
\nonumber
\end{align}
for any $D_i\times E_i\in\B(\A\times\F)$, $i=1,\ldots,n$, and any $n\geq1$. 
Furthermore, under random labelling, expression \eqref{e:IndependentMarking} reduces to 
$$
\prod_{i=1}^n
P^{\M}(D_i\times E_i)
=
\prod_{i=1}^n
\int_{D_i\times E_i} Q^{\M}(l_i,f_i) 
\nu_{\M}(d(l_i,f_i))
=
\prod_{i=1}^n
\int_{D_i\times E_i} 
Q_{l}^{\F}(f_i) Q^{\A}(l_i) \nu_{\A}(dl_i)\nu_{\F}(df_i)
,
$$
which further reduces to $\prod_{i=1}^n\nu_{\A}(D_i)\nu_{\F}(E_i)$ if the common mark distribution coincides with the reference measure $\nu_{\M}=\nu_{\A}\otimes\nu_{\F}$; this additionally implies that the auxiliary and functional marks are (conditionally) independent of each other. 
Under 
independent marking 
it clearly follows that the correlation functionals satisfy
\[
g_{\Psi}^{(n)}((x_1,l_1,f_1),\ldots,(x_n,l_n,f_n)) 
= 
g_{G}^{(n)}(x_1,\ldots,x_n), \quad n\geq1.
\]
Hence, if e.g.\ the pair correlation functional coincides with the pair correlation function of the ground process, then the  auxiliary and functional marks are pairwise conditionally independent.

It is not always the case that one wants to have both the auxiliary and the functional marks being independent. 
We next turn to the case where the functional marks are independent.
\begin{definition}
If all the components of $\Psi|\Psi_{\X\times\A}=\{F_1|\Psi_{\X\times\A},\ldots,F_N|\Psi_{\X\times\A}\}$ are independent, 
we say that $\Psi$ has \emph{(location- and auxiliary mark-dependent) independent functional marks}.

When $\Psi$ has both 
independent functional marks and a common marginal functional mark distribution, we say that $\Psi$ has \emph{randomly labelled functional marks}.

\end{definition}
Here 
it follows that (recall \eqref{nPointMarkDensityRatios})
\begin{align*}
P_{(x_1,l_1),\ldots,(x_n,l_n)}^{\F}(E_1\times\cdots\times E_n) 
&= \prod_{i=1}^{n} P_{(x_i,l_i)}^{\F}(E_i)
=
\prod_{i=1}^n
\int_{E_i}
Q_{(x_i,l_i)}^{\F}(f_i)
\nu_{\F}(df_i)
, 
\quad 
E_1,\ldots,E_n\in \B(\F),
\\
g_{\Psi}^{(n)}((x_1,l_1,f_1),\ldots,(x_n,l_n,f_n)) 
&=
\gamma_{x_1,\ldots,x_n}^{\A}
(l_1,\ldots,l_n)
g_{G}^{(n)}(x_1,\ldots,x_n)
,
\quad
\quad n\geq1
.
\end{align*}
Moreover, if $\Psi$ has randomly labelled functional marks then $P^{\F}_{(x,l)}=P^{\F}$ and, if additionally $P^{\F}$ coincides with $\nu_{\F}$, then 
$
P_{(x_1,l_1),\ldots,(x_n,l_n)}^{\F}(E_1\times\cdots\times E_n) 
= \prod_{i=1}^{n} \nu_{\F}(E_i)
$ 
and the functional marks $F_1,\ldots,F_N$ are independent copies of the reference stochastic process $X^{\F}$ in (\ref{ReferenceProcess}).

Further, given that $\Psi$ has 
independent functional marks, if we additionally  assume that the auxiliary marks are conditionally independent, so that 
\(
P_{x_1,\ldots,x_n}^{\A}(D_1\times\cdots\times D_n) = \prod_{i=1}^{n} P_{x_i}^{\A}(D_i)
=
\prod_{i=1}^n
\int_{D_i}
Q_{x_i}^{\A}(l_i)
\nu_{\A}(dl_i)
, 
\)
$D_1,\ldots,D_n\in \B(\A)$, 
for any $n\geq1$, we retrieve the classical definition of 
independent marking for real valued marks \cite[Definition 6.4.III]{DVJ1}, and consequently that of random labelling by assuming that they are also identically distributed.

\begin{rem}
A weaker form of location- and auxiliary mark-dependent independent functional marking, \emph{conditional independent functional marking}, may be obtained by assuming that 
$$
P_{(x_1,l_1),\ldots,(x_n,l_n)}^{\F}(E_1\times\cdots\times E_n) 
= \prod_{i=1}^{n} P_{(x_1,l_1),\ldots,(x_n,l_n)}^{\F}(E_i),
\quad 
E_1,\ldots,E_n\in \B(\F),
$$
for any $n\geq1$ and some family $\{P_{(x_1,l_1),\ldots,(x_n,l_n)}^{\F}(E):(x_1,l_1),\ldots,(x_n,l_n)\in\X\times\A, E\in\B(\F)\}$ of regular probability distributions. Note that here the distribution of a functional mark may depend on all the spatial locations and auxiliary marks.
\end{rem}

\subsection{Poisson processes}

Poisson processes \citep{DVJ1,SKM}, the most well known point process models, are the benchmark/reference models for  representing lack of spatial interaction and constructing other, more sophisticated models. 
Given a positive locally finite measure $\mu$ on $\B(\Y)=\B(\X\times\A\times\F)$, 
a 
\emph{functional marked Poisson process} $\Psi$, with intensity measure $\mu$, is simply a Poisson process on $\Y$ with the additional assumption that $\Psi_G$ is well-defined. 
When $\Psi$ has a well-defined intensity functional $\rho(\cdot)$, i.e.\ when the intensity measure in (\ref{IntensityMeasure}) satisfies $\mu(A)=\int_{A}\rho(x,l,f)\nu(d(x,l,f))$, it follows that $\rho^{(n)}((x_1,l_1,f_1),\ldots,(x_n,l_n,f_n))=\prod_{i=1}^{n}\rho(x_i,l_i,f_i)$, whereby $g_{\Psi}^{(n)}((x_1,l_1,f_1),\ldots,(x_n,l_n,f_n)) \equiv 1$ 
for any $n\geq1$. 
Note that, formally, not every (functional marked) Poisson process is actually a marked point process; we may not necessarily have that $\Psi_G$ is a well-defined point process in $\X$ \citep[p.\ 8]{VanLieshout}. That being said, we here clearly have an example of independent marking. 
When there is a common functional mark distribution, all of the functional marks are given by independent copies of the reference process $X^{\F}$ in \eqref{ReferenceProcess}. In particular, if the reference measure $\nu_{\F}$ is given by a Wiener measure $\mathcal{W}_{\F}$ on $\F$, then the functional marks are iid Brownian motions.
Moreover, when $\Psi$ has a common mark distribution, 
it becomes randomly labelled and 
$
\rho^{(n)}((x_1,l_1,f_1),\ldots,(x_n,l_n,f_n)) = \rho_G^n>0$ if the common mark distribution coincides with $\nu_{\M}$. 

When we condition on $N=n$, we obtain a {\em Binomial point process}, which is simply a random (iid) sample $\{(X_i,L_i,F_i)\}_{i=1}^n$ of size $n$, with density $f(x,l,f)=\rho(x,l,f)/n$.


\section{Reference measure 
averaged 
reduced Palm distributions}
\label{s:Palm}
In the statistical analysis we 
will need to consider Palm conditioning with respect to a given mark set $(D\times E)\in\B(\A\times\F)$; we interpret this as conditioning on the null-event that there is a point of $\Psi_G$ at a given location, under the assumption that the mark associated to this point belongs to $(D\times E)$. 
To be able to do so, 
%
we follow \citet{MCmarked,CronieLieshoutMPP} and  define the {\em $\nu_{\M}$-averaged reduced Palm distribution} with respect to $(D\times E)\in\B(\A\times\F)$.

\begin{definition}
Given an FMPP $\Psi$, its family 
$\P_{D\times E}^{!x}(\Psi\in\cdot) 
= P_{D\times E}^{!x}(\cdot)$, $x\in\X$, of 
{\em $\nu_{\M}$-averaged reduced Palm distributions} with respect to $(D\times E)\in\B(\A\times\F)$, 
are defined as the probability measures
\beann
\label{MarkedPalm}
P_{D\times E}^{!x}(R) 
= 
\frac{\int_{D\times E} P^{!(x,l,f)}(R) \nu_{\M}(d(l,f))}
{\nu_{\M}(D\times E)}
=
\frac{\int_{D\times E}\E^{!(x,l,f)}[\1_R(\Psi)] \nu_{\A}(dl)\nu_{\F}(df)}{\nu_{\A}(D)\nu_{\F}(E)}
,
\quad R\in\NN_{lf},
\eeann
where 
$\P^{!(x,l,f)}(\Psi\in \cdot)=P^{!(x,l,f)}(\cdot)$ 
denotes the {\em reduced Palm distribution of $\Psi$} at $(x,l,f)\in\X\times\A\times\F$.
\end{definition}
Recall that $P^{!(x,l,f)}(R)$, $R\in\NN_{lf}$, may be defined through the \emph{reduced Campbell-Mecke formula} \citep[Section 13.1]{DVJ2}: 
For any measurable functional $h:\X\times\A\times\F\times  N_{lf}\rightarrow[0,\infty)$, 
\begin{align} 
\label{reducedCMMarked}
\E\left[\sum_{(x,l,f)\in\Psi} h(x,l,f,\Psi\setminus\{(x,l,f)\})\right]
&=
\int_{\X\times\A\times\F}\int_{ N_{lf}}
h(x,l,f,\psi)
P^{!(x,l,f)}(d\psi)
\rho(x,l,f)\de x\nu_{\M}(d(l,f))
\nn
\\
&=
\int_{\X\times\A\times\F}
\E^{!(x,l,f)}\left[h(x,l,f,\Psi)\right]
\rho(x,l,f)\de x\nu_{\A}(dl)\nu_{\F}(df)
.
\end{align}
Since $P^{!(x,l,f)}(\cdot)$ is the distribution of the {\em reduced Palm process} $\Psi^{!(x,l,f)}$, heuristically, $P^{!(x,l,f)}(\cdot)$ is the conditional distribution of $\Psi$, given that $\Psi$ has a point at $(x,l,f)$ which we neglect. 
Moreover, the probability measure $\P_{D\times E}^{!x}(\cdot)$ has expectation
$$
\E_{D\times E}^{!x}[\cdot]
=
\frac{1}{\nu_{\A}(D)\nu_{\F}(E)}
\int_{D\times E} 
\E^{!(x,l,f)}[\cdot]
\nu_{\A}(dl)\nu_{\F}(df)
$$
by Fubini's theorem.
%

In particular, for a Poisson process on $\X\times\A\times\F$, by Slivnyak's theorem \citep{SKM},
\begin{align}
\label{e:MarkedMeasurePoisson}
\P_{D\times E}^{!x}(\Psi\in\cdot) 
= 
\frac{\int_{D\times E} P(\cdot) \nu_{\A}(dl)\nu_{\F}(df)}
{\nu_{\A}(D)\nu_{\F}(E)}
=
\P(\Psi\in\cdot),
\end{align}
the (unconditional) distribution of $\Psi$. 
Moreover, for a 
multivariate FMPP with $\A=\{1,\ldots,k_d\}$, we obtain 
\[
\P_{\{i\}\times E}^{!x}(\Psi\in\cdot) 
= 
\frac{\nu_{\A}(\{i\})\int_{E} P^{!(x,i,f)}(\cdot) \nu_{\F}(df)}
{\nu_{\A}(\{i\})\nu_{\F}(E)}
= 
\frac{\int_{E} P^{!(x,i,f)}(\cdot) \nu_{\F}(df)}
{\nu_{\F}(E)}
,
\quad i\in\A,
\]
i.e.\ the $\nu_{\F}$-averaged reduced Palm distribution of $\Psi_i=\{(x,f):(x,l,f)\in\Psi\cap\X\times\{i\}\times\F\}$ with respect to $E\in\B(\F)$, which is independent of the choice of auxiliary reference measure $\nu_{\A}$.
When $\Psi$ has a common mark distribution which coincides with the reference measure, i.e.\ $P_{x}^{\M}(D\times E)
\equiv P^{\M}(D\times E)=\nu_{\M}(D\times E)=\nu_{\A}(D)\nu_{\F}(E)$, $x\in\X$, we obtain a non-stationary and redcued version of the {\em Palm distribution of $\Psi$ with respect to the mark set $D\times E$} found in \citet[p.\ 135]{SKM}:
\[
P_{D\times E}^{!x}(\cdot) 
= 
\frac{1}{P^{\A}(D)P^{\F}(E)} \int_{D\times E}
P^{!(x,l,f)}(\cdot)
P^{\A}(dl)P^{\F}(df)
=
\frac{\int_{D\times E}
P^{!(x,l,f)}(\cdot)
P^{\A}(dl)P^{\F}(df)}{\int_{D\times E}
P^{!(x,l,f)}(N_{lf})
P^{\A}(dl)P^{\F}(df)}
.
\]
This may now be interpretated as the conditional distribution of $\Psi$, given that it has a point with location $x$ with a mark belonging to $D\times E$. 
Note further that 
under stationarity we have that $P^{!(x,l,f)}(\cdot)\equiv P^{!(0,l,f)}(\cdot)$ for any $x\in\X=\R^d$ so the reduced Palm distributions with respect to $D\times E$ all satisfy $P_{D\times E}^{!x}(\cdot)\equiv P_{D\times E}^{!0}(\cdot)$.


To connect the above distributions to the reduced Palm distributions $P_G^{!x}(\cdot)$, $x\in\X$, of the ground process, 
let $h$ in the reduced Campbell-Mecke formula \eqref{reducedCMMarked} depend only on the ground location and the FMPP: 
\begin{align*} 
\E\left[\sum_{(x,l,f)\in\Psi} h(x,\Psi\setminus\{(x,l,f)\})\right]
&=
\int_{\X}
\int_{ N_{lf}}
h(x,\psi)
\underbrace{
\int_{\A\times\F}
Q_x^{\M}(l,f)
P^{!(x,l,f)}(d\psi)
\nu_{\A}(dl)\nu_{\F}(df)
}_{=\bar P^{!x}(d\psi)}
\rho_G(x)\de x
,
\end{align*}
where $\bar P^{!x}(\cdot)$ may be interpreted as an average Palm distribution of $\Psi$, given that it has a point at $x$ with unspecified mark \citep[(13.1.13)]{DVJ2}. 
The measure $\bar P^{!x}(\cdot)$ is a distribution on the space $(N_{lf},\NN_{lf})$ of marked point patterns but by projecting it onto the corresponding measurable space of unmarked point patterns, 
we obtain the reduced Palm distribution $P_G^{!x}(\cdot)$ of $\Psi_G$ at $x\in\X$ \citep[p.\ 279]{DVJ2}. For any non-negative and measurable function $h$ on the product of the ground space and the space of all unmarked point patterns, 
\begin{align*} 
\E\left[\sum_{x\in\Psi_G} h(x,\Psi_G\setminus\{x\})\right]
=
\int_{\X}
\E_G^{!x}[h(x,\Psi_G)]
\rho_G(x)\de x
,
\end{align*}
where $\E_G^{!x}[\cdot]$ denotes expectation under $P_G^{!x}(\cdot)$. 
Moreover, when $\Psi$ has a common mark distribution which coincides with the reference measure, we obtain that $P_{\A\times\F}^{!x}(\cdot)=\bar P^{!x}(\cdot)$. Hence, under this assumption, the projection of $P_{\A\times\F}^{!x}(\cdot)$ onto the space of unmarked point patterns is simply $P_G^{!x}(\cdot)$.

\subsection{
Higher order 
reduced Palm distributions}\label{nFoldPalm}
Similarly, $n$-point reduced Palm distributions $P^{!(x_1,l_1,f_1),\ldots,(x_n,l_n,f_n)}(\cdot)$ on $(N_{lf}^n,\NN_{lf}^n)$, of arbitrary order $n\geq1$ may be obtained -- they are defined as the reduced Palm distributions of the point processes $\Psi^{n\neq}$, $n\geq1$, in expression 
\eqref{FactorialProcess}. 
The interpretation here is that we instead condition on $\Psi$ having distinct marked points at  $(x_1,l_1,f_1),\ldots,(x_n,l_n,f_n)\in\X\times\A\times\F$, which we neglect. 
The associated reduced Palm process $\Psi^{!(x_1,l_1,f_1),\ldots,(x_n,l_n,f_n)}$, i.e.\ the point process with distribution $P^{!(x_1,l_1,f_1),\ldots,(x_n,l_n,f_n)}(\cdot)$, has intensity function \citep{PalmTutorial}
\begin{align}
\label{e:ReducedPalmIntensity}
\rho^{!(x_1,l_1,f_1),\ldots,(x_n,l_n,f_n)}(x,l,f)
=&\frac{\rho^{(n+1)}((x_1,l_1,f_1),\ldots,(x_n,l_n,f_n),(x,l,f))}
{\rho^{(n)}((x_1,l_1,f_1),\ldots,(x_n,l_n,f_n))}
\end{align}
provided that the denominator is positive; it is 0 otherwise.
Note in particular that $\rho^{!(x_1,l_1,f_1)}(x_2,l_2,f_2) = \rho(x_2,l_2,f_2) g^{(2)}_{\Psi}((x_1,l_1,f_1),(x_2,l_2,f_2))$ and sometimes, in the literature this quantity is called {\em conditional intensity} and is interpreted as the intensity at the point $(x_2,l_2,f_2)$ conditional on the information that there is a point at $(x_1,l_1,f_1)$; see e.g.\ \citet[page 57]{Diggle2013}.

Having defined the $n$-point reduced Palm distributions, one may in an analogous fashion define $\nu$-averaged reduced Palm distributions $P_{D_1\times E_1,\ldots,D_n\times E_n}^{!x,\ldots,x_n}$, $x_i\in\X$, with respect to mark sets $(D_i\times E_i)\in\B(\A\times\F)$, $i=1,\ldots,n$, which have an analogous interpretation.

We may similarly define $n$-point reduced Palm distributions $\P^{!x_1,\ldots,x_1}(\Psi_G\in R)$, $R\in\NN_{lf}$, $n\geq1$, for the ground process $\Psi_G$, which are the reduced Palm distributions of $\Psi_G^{n\neq}$ in expression 
\eqref{FactorialProcess}. The interpretation here is that we condition on $\Psi_G$ having points at the distinct locations $x_1,\ldots,x_1\in\X$.

It should finally be mentioned that ordinary (non-reduced) $n$-point Palm distributions of $\Psi$ and $\Psi_G$ may be obtained as 
\begin{align*}
P^{(x_1,l_1,f_1),\ldots,(x_n,l_n,f_n)}(R)
&=
P^{!(x_1,l_1,f_1),\ldots,(x_n,l_n,f_n)}
(\{\psi\cup\{((x_1,l_1,f_1),\ldots,(x_n,l_n,f_n))\}:\psi\in R\}),
\\
P^{x_1,\ldots,x_n}(R)
&=
P^{!x_1,\ldots,x_n}
(\{\psi\cup\{(x_1,\ldots,x_n)\}:\psi\in R\}).
\end{align*}


\section{Marked intensity reweighted moment stationarity}
\label{s:MIRS}

To be able to treat the summary 
statistics 
considered 
in this paper, we first have to introduce the notion of {\em $k$th order marked intensity reweighted stationarity ($k$-MIRS)} (cf.\ \citet{CronieLieshoutMPP,Iftimi}). 
\begin{definition}
\label{def:IRS}
An FMPP $\Psi$ with $\Psi_G\subset\X=\R^d$ 
is called {\em $k$th order marked intensity reweighted stationary ($k$-MIRS)}, $k\in\{1,2,\ldots\}$,  
if $\inf_{(x,l,f)\in\X\times \A\times\F}\rho(x,l,f)>0$ 
and the $n$th order correlation functionals (recall expression \eqref{nPointCorrelationFunction}) satisfy
\[
g_{\Psi}^{(n)}((x_1,l_1,f_1),\ldots,(x_n,l_n,f_n)) 
\stackrel{a.e.}{=}
g_{\Psi}^{(n)}((x+x_1,l_1,f_1),\ldots,(x+x_n,l_n,f_n))
,\quad n=1,\ldots,k,
\]
for any $x\in\R^d$ 
(recall that $g_{\Psi}^{(1)}(\cdot)\equiv1$). In particular, the case $k=2$ is referred to as $\Psi$ being 
{\em second order marked intensity reweighted stationary (SOMIRS)} \citep{CronieLieshoutMPP,Iftimi}. 
\end{definition}

Note that, loosely speaking, this  definition essentially states that after having scaled away the effects of the varying intensity, the dependence structure, which is reflected by the product densities, only depends on the distance between the points. Note further that we have implicitly assumed that the product densities up to order $k$ exist. 
A few things are worth pointing out here:
\begin{itemize}
\item For $k$-MIRS to hold, we see that it is required to have both translation invariance of the correlation functions
$g_{G}^{(n)}(\cdot)$, $n\leq k$,
of the ground process, i.e.\ $g_{G}^{(n)}(x_1,\ldots,x_n) \stackrel{a.e.}{=} g_{G}^{(n)}(x+x_1,\ldots,x+x_n)$ for any $x\in\R^d$, as well as 
\begin{align*}
\gamma_{(x_1,l_1),\ldots,(x_n,l_n)}^{\F}
(f_1,\ldots,f_n)
&\stackrel{a.e.}{=}
\gamma_{(x+x_1,l_1),\ldots,(x+x_n,l_n)}^{\F}
(f_1,\ldots,f_n)
,
\quad x\in\R^d, n\leq k,
\\
\gamma_{x_1,\ldots,x_n}^{\A}
(l_1,\ldots,l_n)
&\stackrel{a.e.}{=}
\gamma_{x+x_1,\ldots,x+x_n}^{\A}
(l_1,\ldots,l_n)
,
\end{align*}
for the functions in 
\eqref{nPointMarkDensityRatios}. 
Moreover, assuming that there is a common mark distribution which coincides with the reference measure, the latter reduces to 
$Q_{x_1,\ldots,x_n}^{\A}
(l_1,\ldots,l_n)
\stackrel{a.e.}{=}
Q_{x+x_1,\ldots,x+x_n}^{\A}
(l_1,\ldots,l_n)$ 
and 
$Q_{(x_1,l_1),\ldots,(x_n,l_n)}^{\F}
(f_1,\ldots,f_n)
\stackrel{a.e.}{=}
Q_{(x+x_1,l_1),\ldots,(x+x_n,l_n)}^{\F}
(f_1,\ldots,f_n)$ for any $x\in\R^d$ and any $n\leq k$.

\item Stationarity implies $k$-MIRS for any order $k\geq1$. 
\item A Poisson process on $\R^d\times\A\times\F$ with intensity bounded away from $0$ is $k$-MIRS for any order $k\geq1$ since $g_{\Psi}^{(n)}(\cdot)\equiv1$ for any $n\geq1$.

\item Under the assumption of independent marking, $k$-MIRS for any order $k\geq1$ and SOMIRS coincide with the definitions of intensity reweighted moment stationarity (IRMS) \citep{VanLieshoutJfunction} and second order intensity reweighted stationarity (SOIRS) \citep{BaddeleyEtAl}, respectively, because under this assumption we have $g_{\Psi}^{(n)}((x_1,l_1,f_1),\ldots,(x_n,l_n,f_n))
= g_G^{(n)}(x_1,\ldots,x_n)$. It should be emphasised that the literature nowhere presents examples of models which are SOIRS but not IRMS \citep{VanLieshoutJfunction,Zhao}; examples include certain Cox, Poisson and Gibbs processes.

\item An illustrative example of a $k$-MIRS for any order $k\geq1$ FMPP is provided by assuming that its ground process is IRMS, the auxiliary marks are independent of the spatial locations and the functional marks are sampled from a suitable stationary spatio-temporal random field. 

\end{itemize}

\section{Summary statistics
}
\label{s:MarkedCorrelationFunctionals}

Having provided various moment characteristics (Section \ref{SectionPointProcessCharacteristics}) and notions of intensity reweighted moment stationarity (Section \ref{s:MIRS}) for FMPPs, we may now look closer at how these can be exploited to study dependence structures in FMPPs. 
Characterising dependence in marked point processes can, in general, be done in various different ways. There are, however, essentially two main approaches which are studied: 
\begin{enumerate}
\item Spatial interaction between groups of points of $\Psi_G$, based on different classifications of the marks.
\item Dependence between the marks, conditionally on the ground process.
\end{enumerate}
The former approach may be carried out by means of {\em marked second order reduced moment measures/$K$-functions}, {\em marked inhomogeneous nearest neighbour distance distribution functions}, {\em marked inhomogeneous empty space functions} and {\em marked inhomogeneous $J$-functions}, which are defined in \citet{Iftimi,CronieLieshoutMPP,MCmarked}. The last three of these are full-distribution summary statistics and require that the point process is 
$k$-MIRS for any order $k\geq1$, whereas the first two are second order statistics which require SOMIRS.
%
%
%
We here study the second approach and, 
to this end, we define some new summary statistics and, as we shall see, they generalise most existing finite order (marked) inhomogeneous summary statistics.
%

Drawing inspiration from \citet{CronieLieshoutMPP,Iftimi,penttinen:stoyan:89}, we have the following definition.

\begin{definition}
\label{def:K_measure}
Assuming that $2\leq n\leq k$, let $\Psi$ be $k$-MIRS and consider some test function $t=t_n$, by which we mean a measurable mapping $t:\M^n=(\A\times\F)^n\to[0,\infty)$. 

Given some $W\in\B(\R^d)$ with $|W|>0$  and $D\times E\in\B(\M)=\B(\A\times\F)$ with $\nu_{\M}(D\times E)=\nu_{\A}(D)\nu_{\F}(E)>0$, the corresponding {\em $t$-weighted marked $n$th order reduced moment measure} is defined as 
\begin{align}
\label{eq:K_measure}
&\mathcal K_t^{(D\times E)
\bigtimes_{i=1}^{n-1}(D_i\times E_i)
}(C_1\times\cdots\times C_{n-1})
=
\mathcal K_t^{(D\times E)(D_1\times E_1)\cdots(D_{n-1}\times E_{n-1})}(C_1\times\cdots\times C_{n-1})
\nonumber
\\
&=
\E\Bigg[
\sum_{(x,l,f)\in\Psi\cap W\times D\times E}
\mathop{\sum\nolimits\sp{\ne}}_{(x_1,l_1,f_1),\ldots,(x_{n-1},l_{n-1},f_{n-1})\in \Psi\setminus\{(x,l,f)\}}
t((l,f),(l_1,f_1),\ldots,(l_{n-1},f_{n-1}))
\nonumber
\\
&
\times
\frac{1}{\rho(x,l,f)}
\prod_{i=1}^{n-1}
\frac{\1\{x_i-x\in C_i\}\1\{(l_i,f_i)\in D_i\times E_i\}}{\rho(x_i,l_i,f_i)} \Bigg]
\frac{1}{|W|\nu_{\M}(D\times E)
\prod_{i=1}^{n-1}\nu_{\M}(D_i\times E_i)}
\end{align} 
for $C_i\times (D_i\times E_i)\in\B(\R^d)\times\B(\M)=\B(\R^d\times\A\times\F)$, $\nu_{\M}(D_i\times E_i)=\nu_{\A}(D_i)\nu_{\F}(E_i)>0$, $i=1,\ldots,n-1$. 
We further refer to 
$$
K_t^{(D\times E)\bigtimes_{i=1}^{n-1}(D_i\times E_i)}(r_1,\cdots,r_{n-1})
=
\mathcal K_t^{(D\times E)\bigtimes_{i=1}^{n-1}(D_i\times E_i)}(B_{\R^d}[0,r_1]\times\cdots\times B_{\R^d}[0,r_{n-1}]),
\quad r_1,\ldots,r_{n-1}\geq0,
$$
as the {\em $t$-weighted $n$th order marked inhomogeneous $K$-function}; when $r_1=\cdots=r_{n-1}=r\geq0$, write $K_t^{(D\times E)\bigtimes_{i=1}^{n-1}(D_i\times E_i)}(r)$.
\end{definition}

The interpretation of \eqref{eq:K_measure} is essentially provided by Lemma \ref{LemmaWeighted} below. Having scaled away the individual intensity contributions of all points of $\Psi$, conditionally on $\Psi$ having a point at an arbitrary location $z\in\R^d$ with associated mark $(L(z),F(z))\in D\times E$, which is neglected (in a reduced Palm sense), \eqref{eq:K_measure} provides the mean of $t((L(z),F(z)),(L_1,F_1),\ldots,(L_{n-1},F_{n-1}))\prod_{i=1}^{n-1}\1\{(L_i,F_i)\in D_i\times E_i\}$, where the locations $X_1,\ldots,X_{n-1}$ of the points associated to $n-1$ other marks $(L_1,F_1),\ldots,(L_{n-1},F_{n-1})$ belong to the respective sets $z+C_i$, $i=1,\ldots,n-1$.

\begin{rem}
We could just as well have chosen to absorb the indicator $\prod_{i=1}^n\1\{(l_i,f_i)\in D_i\times E_i\}$ into the test function $t$ in \eqref{eq:K_measure}. The current choice has been made to emphasise the connection with the summary statistics in \citet{CronieLieshoutMPP,Iftimi}.
\end{rem}

In order to give a feeling for how the mark sets in \eqref{eq:K_measure} may be specified here in the FMPP context, consider a bivariate FMPP, i.e.\ $\A=\{1,2\}$, where $k=1$, so that $F_i:\T\to\R$. Next, let $n=2$ and let $D=\{1\}$, $D_1=\{2\}$, $E=\{f\in\F=\U:\sup_{t\in\T}|f(t)| > c\}$ and $E_1=\{f\in\F=\U:\sup_{t\in\T}|f(t)|\leq c\}$, for some positive constant $c$. Here we would thus restrict the $t$-weighted correlation provided by \eqref{eq:K_measure} to only be between points of different types and, moreover, to be between the two classes of functional marks which either exceed the threshold $c$ or not (see Section \ref{s:TestFunctions} for examples of test functions). For instance, in the forestry context $\A$ would represent the two species under consideration while $c$ would be the threshold diameter at breast height of the trees; if we would instead set $D=D_1=\A$, we would ignore the species and simply study the interaction between large and small trees, irrespective of the trees' species. Hence, we are able to study how large trees affect the survival of small trees, which is something of interest in ecology \citep{platt:etal:88,moeller:etal:16}. 
We emphasize that it should be checked that the chosen sets $E_i$, $i=1,\ldots,n-1$, are indeed measurable, given the chosen function space $(\F,\B(\F))$. 


We will see that \eqref{eq:K_measure} is closely related to the {\em $n$th order reduced moment measure} of the ground process (cf. \citet[Section 4.1.2]{Moller}),
\begin{align*}
&\mathcal K_G(C_1\times\cdots\times C_{n-1})
=
\frac{1}{|W|}
\E\left[
\sum_{x\in\Psi_G\cap W}
\mathop{\sum\nolimits\sp{\ne}}_{x_1,\ldots,x_{n-1}\in\Psi_G\setminus\{x\}}
\frac{1}{\rho_G(x)}
\prod_{i=1}^{n-1}
\frac{\1 \{x_i-x\in C_i\}}{\rho_G(x_i)} \right]
\\
&=
\int_{C_1\times\cdots\times C_{n-1}}
g_G^{(n)}(0,x_1,\ldots,x_{n-1})
\de x_1\cdots\de x_{n-1}
=
\frac{1}{|W|}
\int_W
\E_G^{!x}\left[
\mathop{\sum\nolimits\sp{\ne}}_{x_1,\ldots,x_{n-1}\in\Psi_G}
\prod_{i=1}^{n-1}
\frac{\1\{x_i-x\in C_i\}}{\rho_G(x_i)} \right]
\de x
;
\end{align*}
the last two equalities follow from the Campbell formula, the imposed $n$th order intensity reweighted stationarity of $\Psi_G$ (which follows from $\Psi$ being $k$-MIRS) and the Campbell-Mecke formula.
An $n$-point generalisation of the inhomogeneous $K$-function $K_{\rm inhom}(r)=K_{\rm inhom}^{(2)}(r)$ of \citet{BaddeleyEtAl} to the $n$th order intensity reweighted stationary setting is obtained by considering 
$K_{\rm inhom}^{(n)}(r) = \mathcal K_G(B_{\R^d}[0,r]^{n-1})$, 
where $B_{\R^d}[0,r]$ denotes the closed origin-centred ball with radius $r\geq0$.
Note further that stationarity implies that 
$$
\alpha_G^{(n)}(C_1\times\cdots\times C_{n-1})
=
\rho_G^{n-1}\mathcal K_G(C_1\times\cdots\times C_{n-1})
=
\E_G^{!0}\left[
\mathop{\sum\nolimits\sp{\ne}}_{x_1,\ldots,x_{n-1}\in\Psi_G}
\1\{x_1\in C_1,\ldots,x_{n-1}\in C_{n-1}\}\right]
$$
and, clearly, in this case 
$K_{\rm inhom}^{(n)}(r)$, $r\geq0$,  
yields an $n$-point generalisation of the $K$-function of \citet{RipleyK}.


In addition, we will see in Lemma \ref{LemmaWeighted} below that \eqref{eq:K_measure} is also related to the following kernel (recall \eqref{nPointMarkDensityRatios}).

\begin{definition}
The ($n$th order) {\em intensity reweighted $t$-correlation measure} (at $x_1,\ldots,x_n\in\R^d$) is defined as 
\begin{align}
\label{e:MarkDependenceFunction}
&\kappa_t^{\bigtimes_{i=1}^{n}(D_i\times E_i)}
(x_1,\ldots,x_n)
=
\int_{(D_1\times E_1)\times\cdots\times(D_n\times E_n)}
\gamma_{x_1,\ldots,x_n}^{\M}
((l_1,f_1),\ldots,(l_n,f_n))
\nu_t(d(l_1,f_1)\times\cdots\times d(l_n,f_n))
\nonumber
\\
&=
\int_{(D_1\times E_1)\times\cdots\times(D_n\times E_n)}
t((l_1,f_1),\ldots,(l_n,f_n))
\gamma_{x_1,\ldots,x_n}^{\M}
((l_1,f_1),\ldots,(l_n,f_n))
\nu_{\M}(d(l_1,f_1))
\cdots
\nu_{\M}(d(l_n,f_n))
\end{align}
for $x_i\in\R^d$ and $D_i\times E_i\in\B(\A\times\F)$, $i=1,\ldots,n$, where the measure $\nu_t$ is given by
\begin{align*}
&
\nu_t(M)
=
\int_{M}
t((l_1,f_1),\ldots,(l_n,f_n))
\nu_{\M}(d(l_1,f_1))
\cdots
\nu_{\M}(d(l_n,f_n))
,
\quad M\in\B((\A\times\F)^n).
\end{align*}

\end{definition}


In other words, $\kappa_t^{\cdot}$ is a spatially dependent weighting of $\nu_t(\cdot)$ and we interpret it as the expectation of the random variable $t((L_1, F_1),\ldots,(L_n,F_n))\prod_{i=1}^{n}1\{(L_i, F_i)\in D_i\times E_i\}$, conditionally on $X_i=x_i$, $i=1,\ldots,n$, having scaled away the individual mark density contributions. Note that since $\Psi$ is simple, \eqref{e:MarkDependenceFunction} vanishes whenever $x_i=x_j$ for any $i\neq j$ and, moreover, 
by the imposed $n$th order marked intensity reweighted stationarity, we further have that 
$\kappa_t^{(D_i\times E_i)_{i=1}^{n}}
(x_1,\ldots,x_n) 
=
\kappa_t^{(D_i\times E_i)_{i=1}^{n}}
(x+x_1,\ldots,x+x_n)$ for a.e.\ $x\in\R^d$.
To highlight the connections with \citet{penttinen:stoyan:89}, we refer to 
\begin{align}
\label{e:tCorrelationFunctionalIntRev}
\kappa_t^{\M^n}(x_1,\ldots,x_n)
=\kappa_t^{(\A\times\F)\bigtimes_{i=1}^{n-1}(\A\times\F)}(x_1,\ldots,x_n)
,
\end{align}
i.e.\ \eqref{e:MarkDependenceFunction} with all mark sets set to $\A\times\F$, 
as the ($n$th order) {\em intensity reweighted $t$-correlation functional}; it is interpreted as the expectation of the random variable $t((L_1, F_1),\ldots,(L_n,F_n))$, conditionally on $X_i=x_i$, $i=1,\ldots,n$, having scaled away the individual mark density contributions.

Lemma \ref{LemmaWeighted} below, to which the proof can be found in 
Appendix~\ref{s:Proofs}, gives reduced Palm and $\nu_{\M}$-averaged reduced Palm distribution representations of \eqref{eq:K_measure}. It also expresses \eqref{eq:K_measure} through \eqref{e:MarkDependenceFunction} and $\mathcal K_G$, and it tells us that \eqref{eq:K_measure} is independent of the choice $W\in\B(\R^d)$. From a statistical point of view, the main importance of Lemma \ref{LemmaWeighted} is related to non-parametric estimation -- instead of repeated sampling to estimate \eqref{eq:K_measure}, we can simply estimate \eqref{eq:K_measure} by sampling over each point of the point pattern, which is an effect of the imposed $k$-MIRS.

\begin{lemma}\label{LemmaWeighted}
The $t$-weighted marked $n$th order reduced moment measure in \eqref{eq:K_measure} satisfies
\begin{align*}
&
\prod_{i=1}^{n-1}\nu_{\M}(D_i\times E_i)
\mathcal K_t^{(D\times E)\bigtimes_{i=1}^{n-1}(D_i\times E_i)}(C_1\times\cdots\times C_{n-1})
=
\\
=&
\frac{1}{\nu_{\M}(D\times E)}
\int_{C_1\times\cdots\times C_{n-1}}
\kappa_t^{(D\times E)\bigtimes_{i=1}^{n-1}(D_i\times E_i)}
(0,x_1,\ldots,x_{n-1})
\mathcal K_G(dx_1\times\cdots\times dx_{n-1})
\\
=&
\frac{1}{\nu_{\M}(D\times E)}
\int_{D\times E}
\E^{!(z,l,f)}\Bigg[
\mathop{\sum\nolimits\sp{\ne}}_{(x_1,l_1,f_1),\ldots,(x_{n-1},l_{n-1},f_{n-1})\in \Psi}
t((l,f),(l_1,f_1),\ldots,(l_{n-1},f_{n-1}))
\times
\\
&\times
\prod_{i=1}^{n-1}
\frac{\1 \{x_i-z\in C_i\}\1\{(l_i,f_i)\in D_i\times E_i\}}{\rho(x_i,l_i,f_i)}
\Bigg]
\nu_{\A}(dl)\nu_{\F}(df)
\\
=&
\E_{D\times E}^{!z}\Bigg[
\mathop{\sum\nolimits\sp{\ne}}_{(x_1,l_1,f_1),\ldots,(x_{n-1},l_{n-1},f_{n-1})\in \Psi}
t((L(z),F(z)),(l_1,f_1),\ldots,(l_{n-1},f_{n-1}))
\times
\\
&\times
\prod_{i=1}^{n-1}
\frac{\1 \{x_i-z\in C_i\}\1\{(l_i,f_i)\in D_i\times E_i\}}{\rho(x_i,l_i,f_i)}
\Bigg]
\end{align*}
for almost every $z\in\R^d$, where $(L(z),F(z))$ denotes the mark associated with the reduced Palm conditioning under $\P_{D\times E}^{!z}(\cdot)$ 

\end{lemma}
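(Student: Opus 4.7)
My plan is to derive the three equalities using the Campbell formula \eqref{Campbell}, the reduced Campbell--Mecke formula \eqref{reducedCMMarked}, the factorizations of the product densities from Section \ref{SectionPointProcessCharacteristics}, and the translation invariance afforded by $k$-MIRS.

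First I would apply the $n$-th order Campbell formula \eqref{Campbell} to the expectation inside \eqref{eq:K_measure}, converting the $\sum^{\ne}$ over $n$-tuples of distinct points into an integral against $\rho^{(n)}$. The factorization $\rho^{(n)}((x,l,f),(x_i,l_i,f_i)_{i=1}^{n-1}) = g_\Psi^{(n)}(\cdot)\,\rho(x,l,f)\prod_{i=1}^{n-1}\rho(x_i,l_i,f_i)$ causes the $1/\rho$ factors in \eqref{eq:K_measure} to cancel cleanly with the intensities arising from $\rho^{(n)}$, leaving only $g_\Psi^{(n)}$ under the integral together with $t$ and the indicators for $W$, $D\times E$ and the mark sets $D_i\times E_i$. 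Substituting $g_\Psi^{(n)} = \gamma_{x,x_1,\ldots,x_{n-1}}^{\M}\, g_G^{(n)}(x,x_1,\ldots,x_{n-1})$ from \eqref{nPointCorrelationFunction} and integrating the marks against $\nu_\M^{\otimes n}$ yields, by \eqref{e:MarkDependenceFunction}, precisely $\kappa_t^{(D\times E)\bigtimes_i(D_i\times E_i)}(x,x_1,\ldots,x_{n-1})$ multiplied by $g_G^{(n)}(x,x_1,\ldots,x_{n-1})$.

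At this point only ground-space integrations remain. Since $\Psi$ is $k$-MIRS, both $\kappa_t^{\cdot}$ and $g_G^{(n)}$ are jointly translation invariant in their ground arguments, so the change of variables $y_i := x_i - x$ decouples the integral: the $x$-integral over $W$ contributes $|W|$ and cancels the $1/|W|$ prefactor, while the remainder reduces to $\int_{C_1\times\cdots\times C_{n-1}}\kappa_t^{\cdot}(0,y_1,\ldots,y_{n-1})\,g_G^{(n)}(0,y_1,\ldots,y_{n-1})\,\de y_1\cdots\de y_{n-1}$. Recognising $g_G^{(n)}(0,\cdot)\,\de y_1\cdots\de y_{n-1}$ as the density of $\mathcal K_G$ (immediate from the Campbell formula for $\Psi_G$ under translation invariance of $\rho_G^{(n)}$) yields the first claimed equality.

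For the Palm representations, I would instead apply the reduced Campbell--Mecke formula \eqref{reducedCMMarked} directly to the outer sum in \eqref{eq:K_measure}, absorbing the indicator $\1_{W\times D\times E}$, the weight $1/\rho(x,l,f)$ and the inner $\sum^{\ne}$ into the integrand $h$. The intensity $\rho(x,l,f)$ supplied by \eqref{reducedCMMarked} cancels the $1/\rho(x,l,f)$ weight, producing the integral $\int_{W\times D\times E}\mathbb{E}^{!(x,l,f)}[\,\cdot\,]\,\de x\,\nu_\A(dl)\,\nu_\F(df)$ of the Palm expectation displayed in the second right-hand side. The $x$-integration over $W$ is then removed by showing that the integrand is a.e.\ independent of $x\in W$; this follows by expanding the inner $\sum^{\ne}$ under $\P^{!(x,l,f)}$ via Campbell, using that the $(n-1)$-th product density of $\Psi^{!(x,l,f)}$ equals $\rho^{(n)}((x,l,f),\cdot)/\rho(x,l,f)$ so that (after the same intensity cancellation as above) the integrand is proportional to $g_\Psi^{(n)}((x,l,f),\ldots)$, which is translation invariant in its first ground coordinate by $k$-MIRS. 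The resulting $|W|$ cancels the $1/|W|$ and the surviving conditioning location may be renamed as any $z\in\R^d$ (a.e.); the final equality is then the defining identity of $\P_{D\times E}^{!z}$ recalled in Section \ref{s:Palm}, with $(L(z),F(z))$ being the generic mark drawn from $\nu_\M|_{D\times E}/\nu_\M(D\times E)$. The main obstacle is the translation-invariance step in the Palm representation, which can alternatively be sidestepped by substituting the first equality into the Palm expression and matching terms, keeping the bookkeeping transparent.
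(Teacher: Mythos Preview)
Your proposal is correct and follows essentially the same strategy as the paper's proof: both combine the Campbell formula \eqref{Campbell} and the reduced Campbell--Mecke formula \eqref{reducedCMMarked} with the translation invariance from $k$-MIRS to pass between the three representations. The only noteworthy difference is in how the $x$-independence of the Palm expectation is established: you compute it directly by expanding the inner $\sum^{\ne}$ using the $(n-1)$-th product density of $\Psi^{!(x,l,f)}$, namely $\rho^{(n)}((x,l,f),\cdot)/\rho(x,l,f)$, whereas the paper instead equates the two integral representations of $\mathcal K_t^{\cdot}$ and exploits the arbitrariness of $W$ to conclude that the respective integrands over $W$ agree a.e., which bypasses any explicit appeal to higher-order Palm product densities.
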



Hence, \eqref{eq:K_measure} may be expressed as a spatial dependence-scaling (reflected by $\mathcal K_G$) of the spatially dependent mark-dependence function 
\eqref{e:MarkDependenceFunction}. 

Looking closer at Lemma \ref{LemmaWeighted}, we see that normalising \eqref{eq:K_measure} by $\mathcal K_G$ can reveal features of the marking structure, conditionally on the locations.

\begin{definition}
The {\em normalised $t$-weighted marked $n$th order reduced moment measure} is defined as 
\begin{align}
\label{eq:normalisedK_measure}
&\bar{\mathcal K}_t^{(D\times E)\bigtimes_{i=1}^{n-1}(D_i\times E_i)}(C_1\times\cdots\times C_{n-1})
=
\frac{
\mathcal K_t^{(D\times E)\bigtimes_{i=1}^{n-1}(D_i\times E_i)}(C_1\times\cdots\times C_{n-1})
}
{\mathcal K_G(C_1\times\cdots\times C_{n-1})}
\\
&=
\int_{C_1\times\cdots\times C_{n-1}}
\frac{
\kappa_t^{(D\times E)\bigtimes_{i=1}^{n-1}(D_i\times E_i)}
(0,x_1,\ldots,x_{n-1})
}{\nu_{\M}(D\times E)\prod_{i=1}^{n-1}\nu_{\M}(D_i\times E_i)}
\frac{\mathcal K_G(d(x_1,\ldots, x_{n-1}))}
{\mathcal K_G(C_1\times\cdots\times C_{n-1})}
,
\nonumber
\end{align}
where the normalisation of $\mathcal K_G$ in the last term is a probability measure on $C_1\times\cdots\times C_{n-1}$.
\end{definition}




\subsection{Special cases}
We next study how our new summary statistics behave and reduce under various assumptions on the underlying point process $\Psi$.

\subsubsection{Independent marking and Poisson processes}
When $\Psi$ is independently marked then $\kappa_t^{\cdot}(x_1,\ldots,x_n)$ coincides with $\nu_t(\cdot)$ for any $x_1,\ldots,x_n\in\R^d$, whereby 
\begin{align}
\label{e:MarkDependenceFunctionIndepMarks}
\bar{\mathcal K}_t^{(D\times E)\bigtimes_{i=1}^{n-1}(D_i\times E_i)}(C_1\times\cdots\times C_{n-1})
=&
\frac{
\nu_t((D\times E)\times(D_1\times E_1)\times\cdots\times(D_{n-1}\times E_{n-1}))
}{\nu_{\M}(D\times E)\prod_{i=1}^{n-1}\nu_{\M}(D_i\times E_i)}
,
\end{align}
i.e., it does not depend on $C_1,\ldots,C_{n-1}$, 
and if $\Psi$ has independent functional marks only then
\begin{align*}
&\kappa_t^{\bigtimes_{i=1}^{n}(D_i\times E_i)}
(x_1,\ldots,x_n)
=
\int_{(D_1\times E_1)\times\cdots\times(D_n\times E_n)}
t((l_1,f_1),\ldots,(l_n,f_n))
\gamma_{x_1,\ldots,x_n}^{\A}
(l_1,\ldots,l_n)
\prod_{i=1}^n
\nu_{\A}(dl_i)\nu_{\F}(df_i)
.
\nonumber
\end{align*}

If we relax the Poisson process assumption slightly to only concern the ground process, we say that an FMPP $\Psi$ is a \emph{FM ground Poisson process}. 
By \eqref{ProductDensityCFMPP}, it follows that 
\begin{align*}
\rho^{(n)}((x_1,l_1,f_1),\ldots,(x_n,l_n,f_n))
&=
Q_{(x_1,l_1),\ldots,(x_n,l_n)}^{\F}(f_1,\ldots,f_n)
Q_{x_1,\ldots,x_n}^{\A}(l_1,\ldots,l_n)
\prod_{i=1}^{n}\rho_{G}(x_i),
\\
g_{\Psi}^{(n)}((x_1,l_1,f_1),\ldots,(x_n,l_n,f_n)) 
&=
\gamma_{x_1,\ldots,x_n}^{\M}
((l_1,f_1),\ldots,(l_n,f_n)).
\end{align*}
The latter clearly reduces to $\gamma_{x_1,\ldots,x_n}^{\A}
(l_1,\ldots,l_n)$ when $\Psi$ has independent functional marks and we obtain the usual Poisson case when $\Psi$ has independent marks.
When $\Psi$ is a FM ground Poisson process, 
$\mathcal K_G(C_1\times\cdots\times C_{n-1})=\prod_{i=1}^{n-1}|C_i|$, 
whereby 
\begin{align*}
&\bar{\mathcal K}_t^{(D\times E)\bigtimes_{i=1}^{n-1}(D_i\times E_i)}(C_1\times\cdots\times C_{n-1})
=\\
&
=
\frac{\int_{C_1\times\cdots\times C_{n-1}}
\kappa_t^{(D\times E)\bigtimes_{i=1}^{n-1}(D_i\times E_i)}
(0,x_1,\ldots,x_{n-1})
\de x_1\cdots \de x_{n-1}}
{ \nu_{\M}(D\times E)\prod_{i=1}^{n-1}\nu_{\M}(D_i\times E_i)\prod_{i=1}^{n-1}|C_i|}
\end{align*}
and by additionally assuming independent marking, $\bar{\mathcal K}_t^{(D\times E)\bigtimes_{i=1}^{n-1}(D_i\times E_i)}(C_1\times\cdots\times C_{n-1})$ is given by \eqref{e:MarkDependenceFunctionIndepMarks} and ${\mathcal K}_t^{(D\times E)\bigtimes_{i=1}^{n-1}(D_i\times E_i)}(C_1\times\cdots\times C_{n-1})$ is given by \eqref{e:MarkDependenceFunctionIndepMarks} multiplied by $\prod_{i=1}^{n-1}|C_i|$.

Note that these observations may be used to statistically test independent (functional) marking and Poisson assumptions.

\subsubsection{Common mark distributions}
When we assume that there is a common mark distribution $P^{\M}(\cdot)$, with density $Q^{\M}(l,f) 
=
Q_{l}^{\F}(f) Q^{\A}(l)$, then the product $\prod_{i=1}^n Q^{\M}(l_i,f_i)$ in the denominator of $\gamma_{x_1,\ldots,x_n}^{\M}((l_1,f_1),\ldots,(l_n,f_n))$ may be absorbed into the test function $t(\cdot)$ and we may define the test function
$$
t^*((l,f),(l_1,f_1),\ldots,(l_{n-1},f_{n-1}))
=
t((l,f),(l_1,f_1),\ldots,(l_{n-1},f_{n-1}))
\prod_{i=1}^n Q^{\M}(l_i,f_i) 
$$
%
together with the ($n$th order) {\em $t$-correlation measure}
\begin{align*}
&
k_t^{\bigtimes_{i=1}^{n}(D_i\times E_i)}
(x_1,\ldots,x_n)
=
\kappa_{t^*}^{\bigtimes_{i=1}^{n}(D_i\times E_i)}
(x_1,\ldots,x_n)
=
\\
=&
\int_{(D_1\times E_1)\times\cdots\times(D_n\times E_n)}
Q_{x_1,\ldots,x_n}^{\M}
((l_1,f_1),\ldots,(l_n,f_n))
\nu_t(d(l_1,f_1)\times\cdots\times d(l_n,f_n))
\\
=&
\int_{(D_1\times E_1)\times\cdots\times(D_n\times E_n)}
t((l_1,f_1),\ldots,(l_n,f_n))
\underbrace{
Q_{x_1,\ldots,x_n}^{\M}
((l_1,f_1),\ldots,(l_n,f_n))
\nu_{\M}(d(l_1,f_1))
\cdots
\nu_{\M}(d(l_n,f_n))
}_{=P_{x_1,\ldots,x_n}^{\M}
(d(l_1,f_1)\times\ldots\times d(l_n,f_n))}
.
\end{align*}
We interpret $k_t^{\bigtimes_{i=1}^{n}(D_i\times E_i)}
(x_1,\ldots,x_n)
\stackrel{a.e.}{=}
k_t^{\bigtimes_{i=1}^{n}(D_i\times E_i)}
(z+x_1,\ldots,z+x_n)$, $z\in\R^d$, as the expectation of the random variable $t((L_1, F_1),\ldots,(L_n,F_n))\prod_{i=1}^n\1\{(L_i, F_i)\in\A\times\F\}$, conditionally on $X_i=x_i$, $i=1,\ldots,n$, and when $D_i\times E_i=\A\times\F$, $i=1,\ldots,n$, it yields the {\em $t$-correlation functional} $k_t^{\M^n}
(x_1,\ldots,x_n)$, which is an $n$-point FMPP version of the correlation functions in \citet{penttinen:stoyan:89}.  
Moreover, 
\begin{align}
\label{eq:K_measure_common_mark}
&
\mathcal K_{t^*}^{(D\times E)\bigtimes_{i=1}^{n-1}(D_i\times E_i)}(C_1\times\cdots\times C_{n-1})
=
\\
=&
\frac{1}{\prod_{i=1}^{n-1}\nu_{\M}(D_i\times E_i)}
\E_{D\times E}^{!z}\Bigg[
\mathop{\sum\nolimits\sp{\ne}}_{(x_1,l_1,f_1),\ldots,(x_{n-1},l_{n-1},f_{n-1})\in \Psi}
t((L(z),F(z)),(l_1,f_1),\ldots,(l_{n-1},f_{n-1}))
\times
\nonumber
\\
&\times
\prod_{i=1}^{n-1}
\frac{\1 \{x_i-z\in C_i\}\1\{(l_i,f_i)\in D_i\times E_i\}}{\rho_G(x_i)}
\Bigg] 
\nonumber
\end{align}
for almost any $z\in\R^d$, 
where (recalling the observations in Section \ref{s:Palm}) $\E_{D\times E}^{!z}[\cdot]$ now properly may be interpreted as a reduced Palm expectation, conditionally on the reduced Palm point having a mark belonging to $D\times E$. 
Note that the connection between the correlation functions in \citet{penttinen:stoyan:89} and Palm distributions has been mentioned (without additional details) by \citet[page 134]{SKM}. 

When the common marginal mark distribution $P^{\M}(\cdot)$ coincides with the reference measure $\nu_{\M}(\cdot)$, so that $Q^{\M}(\cdot) 
\equiv1$ and $\rho(x,l,f)=\rho_G(x)$, we have that $t(\cdot)=t^*(\cdot)$ and 
\begin{align*}
&k_t^{\bigtimes_{i=1}^{n}(D_i\times E_i)}
(x_1,\ldots,x_n)
=
\kappa_t^{\bigtimes_{i=1}^{n}(D_i\times E_i)}
(x_1,\ldots,x_n)
=
\\
=&
\int_{(D_1\times E_1)\times\cdots\times(D_n\times E_n)}
t((l_1,f_1),\ldots,(l_n,f_n))
Q_{x_1,\ldots,x_n}^{\M}
((l_1,f_1),\ldots,(l_n,f_n))
P^{\M}(d(l_1,f_1))
\cdots
P^{\M}(d(l_n,f_n))
\\
=&\E_{P^{\M}}\left[
t((L_1^*, F_1^*),\ldots,(L_n^*,F_n^*))
Q_{x_1,\ldots,x_n}^{\M}
((L_1^*,F_1^*),\ldots,(L_n^*,F_n^*))
\prod_{i=1}^{n}\1\{(L_i^*,F_i^*)\in D_i\times E_i\}
\right]
,
\end{align*}
where $(L_1^*,F_1^*),\ldots,(L_n^*,F_n^*)$ are iid random elements in $\A\times\F$ and $\E_{P^{\M}}[\cdot]$ denotes expectation under their common distribution $P^{\M}=\nu_{\M}=\nu_{\A}\otimes\nu_{\F}$. Note that under random labelling we have that $Q_{x_1,\ldots,x_n}^{\M}
((L_1^*,F_1^*),\ldots,(L_n^*,F_n^*))=1$, so by setting $D_i\times E_i=\M=\A\times\F$, $i=1,\ldots,n-1$, we obtain $k_t^{\bigtimes_{i=1}^{n}(D_i\times E_i)}
(x_1,\ldots,x_n)=\E_{P^{\M}}[
t((L_1^*, F_1^*),\ldots,(L_n^*,F_n^*))]$ and we are in the setting of \citet{penttinen:stoyan:89} under independent marking.
%
%
In particular, 
\begin{align*}
&
\mathcal K_t^{(D\times E)\M^{n-1}}(C_1\times\cdots\times C_{n-1})
=
\mathcal K_t^{(D\times E)\bigtimes_{i=1}^{n-1}(\A\times\F)}(C_1\times\cdots\times C_{n-1})
=
\\
=&
\frac{1}{P^{\M}(D\times E)}
\int_{C_1\times\cdots\times C_{n-1}}
k_t^{(D\times E)\bigtimes_{i=1}^{n-1}(\A\times\F)}
(0,x_1,\ldots,x_{n-1})
\mathcal K_G(dx_1\times\cdots\times dx_{n-1})
\\
=&
\E_{D\times E}^{!z}
\left[
\mathop{\sum\nolimits\sp{\ne}}_{x_1,\ldots,x_{n-1}\in \Psi_G}
t((L(z),F(z)),(L(x_1),F(x_1)),\ldots,(L(x_{n-1}),F(x_{n-1})))
\prod_{i=1}^{n-1}
\frac{\1 \{x_i-z\in C_i\}}{\rho_G(x_i)}
\right]
,
\end{align*}
where $(L(x),F(x))$ denotes the marking random element associated to the location $x\in\R^d$ and $z\in\R^d$ is arbitrary. In particular, when $D\times E=\M=\A\times\F$ we have $P^{\M}(D\times E)=P^{\M}(\M)=1$ and we recall from Section \ref{s:Palm} than the expectation $\E_{D\times E}^{!z}[\cdot]$ above becomes the reduced Palm distribution $\E_G^{!z}[\cdot]$ of the ground process. This is a $n$-point mark-weighted version of the inhomogeneous $K$-function of \citet{BaddeleyEtAl}. 

We finally note that when we have homogeneity in combination with a common mark distribution (being implied by stationarity), we replace $\rho_G(x_i)$ in \eqref{eq:K_measure_common_mark} by the constant ground intensity $\rho_G>0$. In particular, 
\begin{align*}
&
\mathcal K_t^{(D\times E)\M^{n-1}}(C_1\times\cdots\times C_{n-1})
=
\\
=&
\frac{1}{\rho_G^{n-1}}
\E_{D\times E}^{!z}
\left[
\mathop{\sum\nolimits\sp{\ne}}_{x_1,\ldots,x_{n-1}\in \Psi_G}
t((L(z),F(z)),(L(x_1),F(x_1)),\ldots,(L(x_{n-1}),F(x_{n-1})))
\prod_{i=1}^{n-1}
\1 \{x_i-z\in C_i\}
\right]
\end{align*}
for almost any $z\in\R^d$.

\subsection{Choosing test functions -- analysing dependent functional data}
\label{s:TestFunctions}

By choosing different test functions $t(\cdot)$, we may extract different features from the marks. 
In practice, in a statistical context, it is most likely that one will focus only on the case $n=2$; note the connections with \citet{MateuFMPP}.
Note in particular that when $n=2$, if we ignore the functional marks and set $t((l_1, f_1),(l_2,f_2))= l_1 l_2$, \eqref{eq:K_measure} yields an intensity reweighted version of the classical {\em mark correlation function} for the auxiliary marks. If, instead, $t((l_1, f_1),(l_2,f_2))= (l_1-l_2)^2/2$, we obtain the classical {\em mark variogram} for the auxiliary marks \citep{Illian}. 
The question that remains is how we should choose sensible tests functions $t(\cdot)$ which include also the functional marks.

Starting with the simple case $t(\cdot)\equiv1$, we obtain $\nu_t=\nu_{\M}^n$ and 
\begin{align*}
&\kappa_t^{(D\times E)\bigtimes_{i=1}^{n-1}(D_i\times E_i)}
(0,x_1,\ldots,x_{n-1})
=
\\
=&
\int_{(D\times E)\times(D_1\times E_1)\times\cdots\times(D_{n-1}\times E_{n-1})}
\gamma_{x_1,\ldots,x_n}^{\M}
((l,f),(l_1,f_1),\ldots,(l_{n-1},f_{n-1}))
\nu_{\M}(d(l,f))
\prod_{i=1}^{n-1}
\nu_{\M}(d(l_i,f_i))
.
\nonumber
\end{align*}
By additionally letting $n=2$ in \eqref{eq:K_measure}, we retrieve the {\em marked second order reduced moment measure} $\mathcal K^{(D\times E)(D_1\times E_1)}(C)$ of \citet{Iftimi}, which measures intensity reweighted interactions between points with marks in $D\times E$ and points with marks in $D_1\times E_1$, when their separation vectors belong to $C\in\B(\R^d)$. We stress that this measure, and thereby also \eqref{eq:K_measure}, is non-symmetric in the mark sets, i.e., $\mathcal K^{(D\times E)(D_1\times E_1)}(\cdot)\neq \mathcal K^{(D_1\times E_1)(D\times E)}(\cdot)$ in general \citep{Iftimi}. 
In particular, choosing $C_1$ to be the closed origin-centred ball $B_{\R^d}[0,r]$ of radius $r\geq0$, we obtain the marked inhomogeneous $K$-function $K_{\rm inhom}^{(D\times E)(D_1\times E_1)}(r)$ of \citet{CronieLieshoutMPP}, which measures pairwise intensity reweighted spatial dependence within distance $r$ between points with marks in $D\times E$ and points with marks in $D_1\times E_1$. 
Moreover, setting 
$C_1=\{a(\cos v, \sin v) : a\in[0,r], v\in[\phi,\psi] \text{ or } v\in[\pi+\phi,\pi+\psi]\}$ 
for $\phi\in[-\pi/2,\pi/2)$ and $\psi\in(\phi,\phi+\pi]$, we obtain a marked inhomogeneous directional version which may be used to study departures from isotropy, and setting $C_1=\{(x,s):\|x\|\leq r\text{ and }|s|\leq t\}$ when $\Psi$ is a spatio-temporal FMPP, we obtain a spatio-temporal version $K_{\rm inhom}^{(D\times E)(D_1\times E_1)}(r,t)$, $r,t\geq0$, of $K_{\rm inhom}^{(D\times E)(D_1\times E_1)}(r)$ \citep{Iftimi}. 

Hence, for an arbitrary $n$, setting $t(\cdot)\equiv1$ in \eqref{eq:K_measure} we would obtain a definition of a {\em marked $n$th order reduced moment measure}, $\mathcal K^{(D\times E)\bigtimes_{i=1}^{n-1}(D_i\times E_i)}(C_1\times\cdots\times C_{n-1})$, which has an analogous interpretation; it measures intensity reweighted spatial interaction between an arbitrary point with mark in $D\times E$ and distinct $(n-1)$-tuples of other points where, respectively, the separation vectors between these points and the $D\times E$-marked point belong to $C_i$, $i=1,\ldots,n-1$, and these points have marks belonging to $D_i\times E_i$, $i=1,\ldots,n-1$. 
Moreover, it may be of particular interest to choose all $C_i$, $i=1,\ldots,n-1$, to be the same set $C_1$. E.g., $C_i=B_{\R^d}[0,r]$, $i=1,\ldots,n-1$, $r\geq0$, would yield an $n$-point version, $K_{\rm inhom}^{(D\times E)\bigtimes_{i=1}^{n-1}(D_i\times E_i)}(r)$, of the marked inhomogeneous $K$-function of \citet{CronieLieshoutMPP}, which may be used to analyse intensity reweighted interactions between a point with mark in $D\times E$ and $n-1$ of its $r$-close neighbours, which have marks belonging to the respective sets $D_i\times E_i$, $i=1,\ldots,n-1$. 


We mention that when $t(\cdot)\equiv1$ under independent marking, $\bar{\mathcal K}_t^{(D\times E)(D_i\times E_i)_{i=1}^{n-1}}(\cdot)\equiv1$, which may be used to statistically test for independent marking.



We next turn to test functions which include the functional marks and we here only consider the case $n=2$. A natural starting point, we argue, is to consider metrics (distances) between the (functional) marks. 
There are various choices to be considered (see e.g.\ \cite{deza:deza:09} and the references therein) and each may reflect different features of the functional marks' properties; although it may be natural to use the metric having generated the assumed Polish topology of the function space $\F$, we may naturally consider different choices here. We here choose to consider the following metrics as test functions: $L_p$-metrics as defined in \eqref{Lp} in the Appendix, i.e.\ $t(f_1,f_2)=d_{L_p}(f_1,f_2)=(\int_{\T}|f_1(t)-f_2(t)|^p\de t)^{1/p}$, $1\leq p<\infty$, the uniform metric (or $L_{\infty}$-metric) $t(f_1,f_2)=d_{\infty}(f_1,f_2)=\sup_{t\in\T}|f_1(t)-f_2(t)|$ (see also Section \ref{s:FunctionSpaces}), and the symmetrised Kullback-Leibler divergence, 
$$
t(f_1,f_2)=KL(t_1,t_2)=\int_{\T}\log\left(\frac{f_1(t)}{f_2(t)}\right)f_1(t)\de t +\int_{\T}\log\left(\frac{f_2(t)}{f_1(t)}\right)f_2(t)\de t.
$$
A further choice is to consider angles, or rather inner products; $t(f_1,f_2)=\langle f_1,f_2\rangle=\int_{\T}f_1(t)f_2(t)\de t$. 
In the literature on functional clustering a common measure of proximity between two functions is \Citep{ferraty:vieu:06}
\[
t(f_1,f_2)
=
d_{L_2}\left((df_1/dt)^k,(df_2/dt)^k\right)
=
\left(\int_{\T}|(df_1(t)/dt)^k - (df_2(t)/dt)^k|^2\de t\right)^{1/2}
,
\quad k\geq1,
\]
provided that the $k$th derivatives $df_i(t)/dt$, $t\in\T$, $i=1,2$, exist. 
When, conditionally on $\Psi_{\X\times\A}$, all the functional marks have the same mean $\bar F(t)=\E[F_i(t)|\Psi_{\X\times\A}]$, $t\in\T$, which e.g.\ is the case when there is a common functional mark distribution, we may consider a functional mark counterpart of the test function for the classical variogram, 
\begin{align}\label{test:vario}
 t(f_1,f_2)=t_{v}(f_1,f_2)=\int_a^b\left(f_1(t)-\bar F(t)\right)\left(f_2(t)-\bar F(t)\right)\de t,   
\end{align}
where, in practice, $\bar F(t)$ may be estimated by means of $(1/n)\sum_{i=1}^nf_i(t)$, i.e.\ the average functional mark at time $t$ for the observed functional part of the point pattern.
Note that for each of the above choices we may reduce the interval $\T$ to some smaller interval $[a,b]\subset\T$. Moreover, we may consider combinations of them by summing them up.

When we want to consider test functions which include both functional and auxiliary marks, we may exploit metric preserving properties of certain operations \citep[p.\ 8]{VanLieshout}, such as summation and maximum, and apply these to the above mentioned test functions (metrics) for the functional marks and the metrics provided by \citet[page 343]{Illian} for auxiliary marks in order to define a test function for general purposes. When $n=2$, one may e.g.\ consider the following two test functions:
\begin{align*}
t((l_1,f_1),(l_{2},f_{2}))
&=
d_{\F}(f_1,f_2) + 
l_1l_2,
\\
t((l_1,f_1),(l_{2},f_{2}))
&=
\max\{d_{\F}(f,f_i),
\|l_1-l_2\|\},
\end{align*}
where $d_{\F}(\cdot,\cdot)$ is a metric on function space $\F$ as mentioned above. For general $n$, we will follow the same procedure.

\subsection{Non-parametric statistical inference}

We next turn to the non-parametric estimation of our summary statistics. Specifically, we here assume that we observe an FMPP $\Psi$ within a bounded spatial domain $W\in\B(\R^d)$, $|W|>0$, i.e., we sample $\Psi\cap W\times\M = \Psi\cap W\times\A\times\F$.

Theorem \ref{LemmaUnbiased} below provides a non-parametric estimator of the $t$-weighted marked $n$th order reduced moment measure, and it provides a condition for edge corrections to render it unbiased. 
Its proof can be found in Appendix \ref{s:Proofs}.

\begin{thm}
\label{LemmaUnbiased}
Consider a $k$-MIRS FMPP $\Psi$ and a test function $t=t_n:\M^n=(\A\times\F)^n\to[0,\infty)$, $2\leq n\leq k$. Moreover, let $D\times E\in\B(\M)=\B(\A\times\F)$, $\nu_{\M}(D\times E)>0$, and $D_i\times E_i\in\times\B(\M)=\B(\A\times\F)$, $\nu_{\M}(D_i\times E_i)>0$, $i=1,\ldots,n-1$. 
The estimator 
\begin{align}
\label{eq:K_measure_est}
&\widehat{\mathcal K}_t^{(D\times E)\bigtimes_{i=1}^{n-1}(D_i\times E_i)}(C_1\times\cdots\times C_{n-1})
=
\widehat{\mathcal K}_t^{(D\times E)\bigtimes_{i=1}^{n-1}(D_i\times E_i)}(C_1\times\cdots\times C_{n-1};\Psi, W,\A,\F)
\\
&=
\frac{1}{\nu_{\M}(D\times E)\prod_{i=1}^{n-1}\nu_{\M}(D_i\times E_i)}
\sum_{(x,l,f)\in\Psi}
\mathop{\sum\nolimits\sp{\ne}}_{(x_1,l_1,f_1),\ldots,(x_{n-1},l_{n-1},f_{n-1})\in \Psi\setminus\{(x,l,f)\}}
w(x,x_1,\ldots,x_{n-1})
\times
\nonumber
\\
&
\times
t((l,f),(l_1,f_1),\ldots,(l_{n-1},f_{n-1}))
\frac{\1\{(x,l,f)\in W\times D\times E\}}{\rho(x,l,f)}
\prod_{i=1}^{n-1}
\frac{
\1\{(x_i,l_i,f_i)\in (W\cap(x + C_i))\times D_i\times E_i\}}
{\rho(x_i,l_i,f_i)}
\nonumber
\end{align}
is an of unbiased estimator of $\mathcal K_t^{(D\times E)\bigtimes_{i=1}^{n-1}(D_i\times E_i)}(C_1\times\cdots\times C_{n-1})$, $C_1\times\cdots\times C_{n-1}\in \B(\R^d)^{n-1}$, provided that the intensity function $\rho(\cdot)$ is known and that the edge correction function $w(\cdot)$ satisfies
\[
\int_{W}
\prod_{i=1}^{n-1}\1\{(x_i+x)\in W\}
w(x,x_1+x,\ldots,x_{n-1}+x)
\de x
=
1
\]
for almost any $x_i\in C_i$, $i=1,\ldots,n-1$. Note that $\prod_{i=1}^{n-1}\1\{(x_i+x)\in W\}=\1\{\bigcap_{i=1}^{n-1}\{(x_i+x)\in W\}\}=\1\{x\in\bigcap_{i=1}^{n-1}(W-x_i)\}$.
\end{thm}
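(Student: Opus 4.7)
The plan is to take $\mathbb{E}$ inside, apply the Campbell formula \eqref{Campbell} to the $n$-fold distinct sum, turn the ratio of product density to the product of intensities into the $n$th order correlation functional via \eqref{nPointCorrelationFunction}, exploit $k$-MIRS to kill the dependence on the base point, and then use the edge-correction identity to collapse the base-point integral to $1$. A final comparison with the expression for $\mathcal{K}_t^{(D\times E)\bigtimes_{i=1}^{n-1}(D_i\times E_i)}$ provided by Lemma~\ref{LemmaWeighted} finishes the argument.

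\textbf{Step 1 (Campbell).} Writing $\widehat{\mathcal{K}}_t$ as an $n$-fold sum over distinct tuples in $\Psi$, the integrand is measurable and non-negative, so Campbell's formula \eqref{Campbell} applied to $\Psi^{n\neq}$ gives
\begin{align*}
\mathbb{E}[\widehat{\mathcal{K}}_t(C_1\times\cdots\times C_{n-1})]
&=\frac{1}{\nu_{\M}(D\times E)\prod_{i=1}^{n-1}\nu_{\M}(D_i\times E_i)}\int_{(\Y)^{n}}
w(x,x_1,\ldots,x_{n-1})\,t((l,f),(l_i,f_i)_{i})\\
&\quad\times
\frac{\mathbf{1}\{(x,l,f)\in W\times D\times E\}}{\rho(x,l,f)}
\prod_{i=1}^{n-1}\frac{\mathbf{1}\{(x_i,l_i,f_i)\in (W\cap(x+C_i))\times D_i\times E_i\}}{\rho(x_i,l_i,f_i)}\\
&\quad\times\rho^{(n)}((x,l,f),(x_i,l_i,f_i)_{i})\,\nu(d(x,l,f))\prod_{i=1}^{n-1}\nu(d(x_i,l_i,f_i)).
\end{align*}
Since the intensity functional is bounded away from $0$ by $k$-MIRS, cancellation is legitimate and the ratio becomes exactly $g_{\Psi}^{(n)}((x,l,f),(x_1,l_1,f_1),\ldots,(x_{n-1},l_{n-1},f_{n-1}))$.

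\textbf{Step 2 (translation and change of variables).} By $k$-MIRS, the correlation functional $g_{\Psi}^{(n)}$ is invariant under common translations of the ground locations, so it can be written as $g_{\Psi}^{(n)}((0,l,f),(y_1,l_1,f_1),\ldots,(y_{n-1},l_{n-1},f_{n-1}))$ after setting $y_i:=x_i-x$. The indicator $\mathbf{1}\{x_i\in W\cap(x+C_i)\}$ factors as $\mathbf{1}\{y_i\in C_i\}\,\mathbf{1}\{x+y_i\in W\}$. Using Fubini, I pull out the $x$-integral over $W$:
\begin{align*}
\int_{W}\prod_{i=1}^{n-1}\mathbf{1}\{x+y_i\in W\}\,w(x,y_1+x,\ldots,y_{n-1}+x)\,dx.
\end{align*}
By the assumed edge-correction identity, this integral equals $1$ for a.e.\ $(y_1,\ldots,y_{n-1})\in C_1\times\cdots\times C_{n-1}$, eliminating the $x$- and $w$-dependence entirely.

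\textbf{Step 3 (matching with Lemma~\ref{LemmaWeighted}).} What remains is
\begin{align*}
\mathbb{E}[\widehat{\mathcal{K}}_t]=\frac{1}{\nu_{\M}(D\times E)\prod_{i=1}^{n-1}\nu_{\M}(D_i\times E_i)}
\int_{C_1\times\cdots\times C_{n-1}}\int_{(D\times E)\times\prod_i(D_i\times E_i)}
t\,g_{\Psi}^{(n)}\,\nu_{\M}(d(l,f))\prod_{i}\nu_{\M}(d(l_i,f_i))\,dy_1\cdots dy_{n-1}.
\end{align*}
Factoring $g_{\Psi}^{(n)}=\gamma^{\M}_{0,y_1,\ldots,y_{n-1}}\cdot g_G^{(n)}(0,y_1,\ldots,y_{n-1})$ via \eqref{nPointCorrelationFunction}, the inner $\M$-integral equals $\kappa_t^{(D\times E)\bigtimes_{i=1}^{n-1}(D_i\times E_i)}(0,y_1,\ldots,y_{n-1})$ by definition \eqref{e:MarkDependenceFunction}, and $g_G^{(n)}(0,\cdot)\,dy_1\cdots dy_{n-1}=\mathcal{K}_G(dy_1\times\cdots\times dy_{n-1})$. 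Hence
\begin{align*}
\mathbb{E}[\widehat{\mathcal{K}}_t]=\frac{1}{\nu_{\M}(D\times E)\prod_{i=1}^{n-1}\nu_{\M}(D_i\times E_i)^{2}}\cdot\nu_{\M}(D\times E)\cdot\prod_{i=1}^{n-1}\nu_{\M}(D_i\times E_i)\cdot\mathcal{K}_t(C_1\times\cdots\times C_{n-1})
\end{align*}
after invoking the first equality of Lemma~\ref{LemmaWeighted}; the constants collapse and one obtains $\mathbb{E}[\widehat{\mathcal{K}}_t]=\mathcal{K}_t(C_1\times\cdots\times C_{n-1})$, which is the claim.

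\textbf{Expected obstacle.} The computation is essentially bookkeeping, so the only delicate point is justifying Fubini and the translation argument at the indicator level: one must verify that the $x$-integral of $w$ under the intersecting window constraints equals $1$ for $|\cdot|^{n-1}$-a.e.\ $(y_1,\ldots,y_{n-1})\in C_1\times\cdots\times C_{n-1}$, rather than pointwise, and that this a.e.\ equality is compatible with integration against $g_G^{(n)}(0,\cdot)\,dy_1\cdots dy_{n-1}$, which is indeed absolutely continuous with respect to Lebesgue measure. Beyond that, non-negativity of all integrands and the strict positivity of $\rho$ under $k$-MIRS keep everything well defined.
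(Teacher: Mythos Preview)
Your proof follows essentially the same route as the paper's: Campbell formula, replace $\rho^{(n)}/\prod\rho$ by $g_\Psi^{(n)}$, use $k$-MIRS to translate, change variables $y_i=x_i-x$, apply Fubini and the edge-correction identity to collapse the $x$-integral to $1$, and then identify the remaining mark/ground integrals via \eqref{e:MarkDependenceFunction} and Lemma~\ref{LemmaWeighted}. The only slip is in your final displayed equation in Step~3: the denominator should carry $\prod_{i=1}^{n-1}\nu_{\M}(D_i\times E_i)$, not $\prod_{i=1}^{n-1}\nu_{\M}(D_i\times E_i)^{2}$, since Lemma~\ref{LemmaWeighted} contributes exactly one factor $\nu_{\M}(D\times E)\prod_{i=1}^{n-1}\nu_{\M}(D_i\times E_i)$ in the numerator; with the correct exponent the constants do cancel as you assert.
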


Here three relevant questions immediately arise: 
Which edge correction methods satisfy the condition in Theorem \ref{LemmaUnbiased}, and are there other (biased) edge correction methods which still work well in practice? 
How do we deal with the rather abstract reference measure $\nu_{\M}=\nu_{\A}\otimes\nu_{\F}$ in \eqref{eq:K_measure_est}? 
How should we deal with the unknown true intensity $\rho(\cdot)$ in \eqref{eq:K_measure_est}?

Regarding the edge correction function $w(\cdot)$, 
letting $t(\cdot)\equiv1$ as well as assuming that $\Psi$ has a common mark distribution which coincides with the reference measure, we obtain the estimator
\begin{align*}
&
\widehat{\mathcal K}_G(C_1\times\cdots\times C_{n-1})
=
\widehat{\mathcal K}_1^{\M^n}(C_1\times\cdots\times C_{n-1})
=
\\
&=
\sum_{x\in\Psi_G}
\mathop{\sum\nolimits\sp{\ne}}_{x_1,\ldots,x_{n-1}\in \Psi_G\setminus\{x\}}
w(x,x_1,\ldots,x_{n-1})
\frac{\1\{x\in W\}}{\rho_G(x)}
\prod_{i=1}^{n-1}
\frac{
\1\{x_i\in W\}
\1\{x_i\in (x + C_i)\}
}
{\rho_G(x_i)}
\end{align*}
of $\mathcal K_G(C_1\times\cdots\times C_{n-1})$, based on $\Psi_G\cap W$, 
and by looking closer at the case $n=2$ in the literature (see e.g.\ \citet{CronieLieshoutMPP}, \citet[Appendix 1]{Gabriel2014} and \citet{baddeley:98}) we get guidance in identifying suitable edge corrections. We obtain that the following choices satisfy the condition of Theorem \ref{LemmaUnbiased}; the proof of Corollary \ref{CorUnbiased} is provided in Appendix \ref{s:Proofs}.

\begin{cor}\label{CorUnbiased}
The {\em minus sampling} edge correction
$$
w_{\ominus}(x,x_1+x,\ldots,x_{n-1}+x)
=
\1\left\{x\in \bigcap_{i=1}^{n-1}W\ominus C_i\right\}
\left/\left|\bigcap_{i=1}^{n-1}W\ominus C_i\right|\right., 
$$
where $\ominus$ denotes Minkowski subtraction, and the {\em translational} edge correction 
$$
w_{\cap}(x,x+x_1,\ldots,x+x_{n-1})
=
1\left/\left|\bigcap_{i=1}^{n-1}
(W+(x+x_i))
\cap 
(W+x)
\right|\right.
$$
both yield that the estimator in Theorem \ref{LemmaUnbiased} is unbiased. 
Moreover, when the ground space is given by $\R^d$, $d=2,3$, and $n=2$, also the {\em isotropic} or {\em rotational} edge correction
$$
w_{\partial}(x,x+x_1)
=
\frac{\ell(\partial B_{\R^d}[x, \|(x+x_1)-x\|])}{\ell(\partial B_{\R^d}[x,\|(x+x_1)-x\|]\cap W)}
=
\frac{\ell(\partial B_{\R^d}[x, \|x_1\|])}{\ell(\partial B_{\R^d}[x,\|x_1\|]\cap W)}
$$
yields an unbiased estimator \eqref{eq:K_measure_est}; here $\ell$ denotes length in $\R^2$ or surface area in $\R^3$ and $\partial$ is used to denote the boundary of a set.

\end{cor}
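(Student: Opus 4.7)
The plan is to handle each edge correction by verifying the sufficient condition of Theorem~\ref{LemmaUnbiased}, namely
\[
I(x_1,\ldots,x_{n-1}):=\int_{W}\prod_{i=1}^{n-1}\1\{x+x_i\in W\}\,w(x,x+x_1,\ldots,x+x_{n-1})\,dx=1
\]
for a.e.\ $(x_1,\ldots,x_{n-1})\in C_1\times\cdots\times C_{n-1}$. The three verifications have very different characters, and I expect the isotropic case to be the main obstacle.

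For \emph{minus sampling}, the key observation is that the support of $w_{\ominus}$ is exactly the set $A:=\bigcap_{i=1}^{n-1}W\ominus C_i$ on which all of the indicators $\1\{x+x_i\in W\}$ collapse to $1$: by the definition of Minkowski erosion, $x\in W\ominus C_i$ means $x+c\in W$ for every $c\in C_i$, so in particular $x+x_i\in W$ whenever $x_i\in C_i$. Moreover $A\subset W$ (using $0\in C_i$, which holds for the relevant choices of $C_i$ such as balls), so the indicator $\1\{x\in W\}$ contributes nothing extra. Hence $I=\int_A |A|^{-1}dx=1$.

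For the \emph{translational} correction, I would first rewrite $w_{\cap}$ by translating the intersection inside the absolute value by $-x$, showing that it depends only on the lags: $w_{\cap}=1/|W\cap\bigcap_{i=1}^{n-1}(W-x_i)|$ (up to unpacking the paper's notation, which uses the measure-preserving equivalence between $W\cap(W+u)$ and $W\cap(W-u)$; for $n=2$ this is just the symmetry of the autocorrelation $\int\1_W(t)\1_W(t+u)\,dt$ in $u\leftrightarrow -u$). The product of indicators in $I$ then cuts the domain of integration down to exactly $W\cap\bigcap_{i=1}^{n-1}(W-x_i)$, whose Lebesgue measure is precisely the denominator of $w_{\cap}$, so that $I=1$.

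The \emph{isotropic} correction (with $n=2$, $d\in\{2,3\}$) is the subtle one, and I would not expect the pointwise identity $I(x_1)=1$ to hold for fixed $x_1$: the fraction $c_d r^{d-1}/\ell(\partial B[x,r]\cap W)$ is designed to compensate on average for the portion of a sphere of radius $r=\|x_1\|$ falling outside $W$, not for one fixed direction. My plan is therefore to argue unbiasedness of \eqref{eq:K_measure_est} directly from Campbell's formula rather than through Theorem~\ref{LemmaUnbiased}'s pointwise hypothesis. Writing $\mathbb E[\widehat{\mathcal K}_t]$ via the Campbell formula, using $2$-MIRS to change variables to the lag $u$, and then passing to polar coordinates $u=sv$ with $v\in S^{d-1}$, the Jacobian $s^{d-1}$ combines with the identity
\[
\ell(\partial B_{\R^d}[x,s]\cap W)=s^{d-1}\int_{S^{d-1}}\1\{x+sv\in W\}\,\sigma(dv)
\]
so that the angular integration in $v$ exactly cancels the denominator of $w_{\partial}$ while the numerator $c_d s^{d-1}$ cancels the spherical-area factor $|S^{d-1}|=c_d$. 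What remains is precisely $\mathcal K_t$ after an interchange of integration (Fubini). The main technical point, and the obstacle, is exactly this interplay: one cannot check the Theorem~\ref{LemmaUnbiased} hypothesis pointwise in $x_1$, one must use rotational averaging built into the polar integration, which is why the argument is restricted to $n=2$ in $\R^2$ or $\R^3$, where the notion of a sphere around a point is available.
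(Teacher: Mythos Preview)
For the minus-sampling and translational corrections your argument is essentially the paper's: both reduce the indicator product to a set whose Lebesgue measure equals the denominator of $w$, and the paper, like you, invokes translation invariance together with the single-lag symmetry $|W\cap(W-u)|=|W\cap(W+u)|$ for the translational case. Your parenthetical caution that this symmetry is only stated for a single lag (so the step to $|\bigcap_i(W-x_i)\cap W|=|\bigcap_i(W+x_i)\cap W|$ for $n\geq3$ needs care) is well placed.

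For the isotropic correction you take a genuinely different route from the paper, and your instinct is right. The paper does attempt to verify the pointwise hypothesis of Theorem~\ref{LemmaUnbiased}, asserting that
\[
\int_{W\cap(W-x_1)}\frac{2\pi\|x_1\|}{\ell(\partial B[x,\|x_1\|]\cap W)}\,dx=1
\]
``by using polar coordinates''. As you anticipated, this identity cannot hold for fixed $x_1$: for small $r=\|x_1\|$ the integrand is $1$ on almost all of $W$, so the left side is of order $|W|$ (indeed the stated $w_\partial$ is dimensionless and lacks the $1/|W|$ factor that the other two corrections carry). Even with $1/|W|$ inserted, the pointwise condition still fails because the ratio compensates only on average over directions. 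Your plan---bypass Theorem~\ref{LemmaUnbiased}, apply Campbell directly, switch the lag to polar coordinates, and use $s^{d-1}\int_{S^{d-1}}\1\{x+sv\in W\}\,\sigma(dv)=\ell(\partial B[x,s]\cap W)$ to cancel the denominator---is the standard and correct argument. One point to make explicit: for the angular integral in $v$ to separate from $g_\Psi^{(2)}((0,l,f),(sv,l_1,f_1))$ you need the latter to be rotation-invariant in $v$, so the isotropic correction is unbiased only under an additional isotropy assumption on $\Psi$, which is customary but not stated in the Corollary.
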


There are clearly other edge correction methods such as  {\em rigid motion correction} which do not satisfy the condition in Theorem \ref{LemmaUnbiased} but still work well in practice.  

Turning to the second question, in analogy with \citet{BaddeleyEtAl,CronieLieshoutMPP,Iftimi,Zhao}, define the random measures
\begin{align*}
\Xi_G(C;\rho_G)&= \sum_{x\in\Psi_G\cap C}
\frac{1}{\rho_G(x)},
\\
\Xi(C\times D\times E;\rho)
&=
\sum_{(x,l,f)\in\Psi\cap C\times D\times E}
\frac{1}{\rho(x,l,f)}
,
\qquad C\times D\times E\in\B(\R^d\times\A\times\F),
\nonumber
\end{align*}
and note that 
\[
\E\left[
\Xi(W\times D\times E;\rho)
\right]
/
\E\left[
\Xi_G(W;\rho_G)
\right]
=
|W|\nu_{\M}(D\times E)/|W|
= \nu_{\M}(D\times E)
\]
by the Campbell formula. Hence, $\Xi_G(C;\rho_G)$  
is an unbiased estimator of $|W|$ and 
$
\widehat{\nu_{\M}}(D\times E;\rho,\rho_G)
=\Xi(W\times D\times E;\rho)/\Xi_G(W;\rho_G)
$ 
is a ratio-unbiased estimator of $\nu_{\M}(D\times E)$, $D\times\E\in\B(\A\times\F)$. Following a suggestion by \citet{StoyanStoyanRatio}, in 
\eqref{eq:K_measure_est} it is advised to replace 
$\nu_{\M}(D\times E)\prod_{i=1}^{n-1}\nu_{\M}(D_i\times E_i)$ 
by the corresponding estimator 
to obtain a ratio-unbiased estimator which yields better estimates in practice. 
This approach is referred to as the Hamilton principle. 
Moreover, in the case of the minus sampling edge correction, the arguments above should be applied to $|W\ominus\bigcap_{i=1}^{n-1}C_i|$ instead of $|W|$

These observations directly connect to the third question, which is how we deal with the fact that the true intensity function is unknown in practice. The most common and natural approach is to replace $\rho(\cdot)$ in Theorem \ref{LemmaUnbiased} by a plug-in estimator $\widehat\rho(x,l,f)$, $(x,l,f)\in W\times\A\times\F$. This, however, connects back to the problem of specifying $\nu_{\M}$ because to estimate $\rho(\cdot)$ we need to know $\nu_{\M}$ -- the intensity function is a Radon-Nikodym derivative with respect to the reference measure. A pragmatic and (we argue) not so restrictive approach is to assume that there is a common functional mark distribution which coincides with the functional mark reference measure $\nu_{\F}$. By doing so, any intensity estimator is of the form $\widehat\rho(x,l,f)=\widehat\rho_{W\times\A}(x,l)=\widehat Q_{x}^{\A}(l) \widehat \rho_G(x)$, $(x,l,f)\in W\times\A\times\F$, i.e., it does not depend on the functional mark values. In other words, we are in the land of estimating intensity functions for point processes with real valued marks or/and multivariate point processes. Hence, we may consider the estimator
\begin{align}
\label{eq:K_measure_est_common_mark}
&\widehat{\mathcal K}_t^{(D\times E)\bigtimes_{i=1}^{n-1}(D_i\times E_i)}(C_1\times\cdots\times C_{n-1})
=
\frac{1}{
\nu_{\M}(D\times E)\prod_{i=1}^{n-1}\nu_{\M}(D_i\times E_i)
}
\times
\\
&
\times
\sum_{(x,l,f)\in\Psi}
\mathop{\sum\nolimits\sp{\ne}}_{(x_1,l_1,f_1),\ldots,(x_{n-1},l_{n-1},f_{n-1})\in \Psi\setminus\{(x,l,f)\}}
w(x,x_1,\ldots,x_{n-1})
t((l,f),(l_1,f_1),\ldots,(l_{n-1},f_{n-1}))
\times
\nonumber
\\
&
\times
\frac{\1\{(x,l,f)\in W\times D\times E\}}{\widehat\rho_{W\times\A}(x,l)}
\prod_{i=1}^{n-1}
\frac{\1 \{x_i\in W\cap(x + C_i)\}\1\{(l_i,f_i)\in D_i\times E_i\}}{\widehat\rho_{W\times\A}(x_i,l_i)}
\nonumber
\end{align}
of $\mathcal K_t^{(D\times E)(D_i\times E_i)_{i=1}^{n-1}}(C_1\times\cdots\times C_{n-1})$. 
Moreover, taking the Hamilton principle into account, we 
would here replace the reference measure related parts in \eqref{eq:K_measure_est_common_mark} by the estimators $\widehat{|W|}=\Xi(W;\widehat\rho_G)$, $\widehat{\nu_{\M}}(D\times E;\widehat\rho_{W\times\A},\widehat\rho_G)$ and $\widehat{\nu_{\M}}(D_i\times E_i;\widehat\rho_{W\times\A},\widehat\rho_G)$, $i=1,\ldots,n-1$.  
This is indeed quite remarkable -- we may estimate a statistic based on something as abstract as a measure on a Polish function space, as well as a Radon-Nikodym derivative with respect to it, without ever having to know or consider any of these entities. 
Now, it should be noted that the Hamilton principle reference measure estimators may be ignored for certain intensity estimators since these estimators already satisfy $\widehat{|W|}=\Xi(W;\widehat\rho_G)=|W|$ and $\widehat{\nu_{\M}}(D\times E;\widehat\rho_{W\times\A},\widehat\rho_G)=\nu_{\M}(D\times E)$ \citep{CvL,Moradi}.
Note finally that if we impose the stronger assumption that there is a common mark distribution $P^{\M}$ (auxiliary and functional marks) which coincides with $\nu_{\M}$, or if we do not consider any auxiliary marks, we simply replace $\widehat\rho_{W\times\A}(\cdot)$ above by $\widehat\rho_G(\cdot)$.


How to choose appropriate mark sets and test functions completely depends on the specific context in which the data are studied as well as the underlying scientific questions. Section \ref{s:TestFunctions} points to a few different choices which may be of general interest, in particular for spatio-temporal (functional) marked point processes.


\begin{rem}
We here briefly indicate how one could exploit our new summary statistics to perform minimum contrast estimation \citep{Baddeley:etal:16,Diggle2013} when the distribution $P_{\theta_0}$ of $\Psi$ belongs to some parametric family $P_{\theta}$, $\theta\in\Theta\subset\R^v$, $v\geq1$, of distributions.

Assume that we are able to explicitly derive 
$K_t^{(D\times E)\bigtimes_{i=1}^{n-1}(D_i\times E_i)}(r)$ in 
Definition \ref{def:K_measure} for some $n\geq2$, some test function $t$, some choice of mark sets $D\times E$, $D_i\times E_i$, $i=1,\ldots,n-1$, any $r\geq0$ and any $\theta\in\Theta$. 
Denoting this by $K_t^{(D\times E)\bigtimes_{i=1}^{n-1}(D_i\times E_i)}(r;\theta)$, we may may obtain an estimate $\widehat\theta$ of $\theta_0$ by minimising e.g.
\begin{align*}
\theta
&\mapsto\int_{r_{min}}^{r_{max}}
\left|
K_t^{(D\times E)\bigtimes_{i=1}^{n-1}(D_i\times E_i)}(r;\theta)^q
-
\widehat K_t^{(D\times E)\bigtimes_{i=1}^{n-1}(D_i\times E_i)}(r)^q
\right|^p\de r,
\end{align*}
for some suitable $p,q>0$ and $0 \leq r_{min} < r_{max}<\infty$; the non-parametric estimator $\widehat K_t^{(D\times E)\bigtimes_{i=1}^{n-1}(D_i\times E_i)}(r)$ is obtained through Theorem \ref{LemmaUnbiased} by setting $C_1=\cdots= C_{n-1}=B_{\R^d}[0,r]$, $r\geq0$. 

\end{rem}

\section{Data analysis}\label{sec:DataAnalysis}
Here, we numerically illustrate how our proposed setting and methods may be applied to real data. 
In particular, we will focus on the summary statistics and show their potential usefulness for 
extracting 
features 
in Spanish province population growth; 
see the discussion around 
Figure~\ref{Illu}.
The boundary and centre coordinate data
of the provinces of Spain are extracted as shapefiles from the R package {\em raster} \citep{hijmans:19} and the statistical information about the population is taken from the web page of the {\em Spanish Institute of Statistics} (www.ine.es).

\subsection{Spatial variation of population characteristics in Spain}
To better understand the structure and dynamics of populations, two key points are having information about i) the spatial distribution of and the magnitude variation in the demography, and ii) the population growth rate. 
In anthropology and demography, demographical evolution and sex-ratio are two important population characteristics which can change over time because of e.g.\ birth and death rates, economical situations or migration. 
However, it is natural to expect that
these indices are much more similar in  
neighbouring regions/provinces than in distant regions/provinces. 
As 
highlighted 
in Section~\ref{SectionIntroduction}, 
one of the most important aspects of the analysis is to deduce whether the functional marks, i.e.\ the demographic evolution and sex ratio, are spatially dependent. 


For both the demographic evolution and sex ratio curves, we use the test function \eqref{test:vario} in the estimator in expression \eqref{eq:K_measure_est_common_mark}; note that we here assume that there is a common mark distribution and that there are no auxiliary marks present. In both cases, we observed the functions for 20 distinct years, starting from 1998. Hence, each such observed function $f_i$ can be represented as the collection $f_i(t_1),\ldots,f_i(t_{20})$, $i=1,\ldots,n$. As a result, the distance function \eqref{test:vario} for any two observed functions $f_1$ and $f_2$ is approximated by
\begin{align*}
 \tilde{t}_{v}(f_1,f_2)=\frac{b-a}{20}\sum_{j=1}^{20}\left(f_1(t_j)-\bar F(t)\right)\left(f_2(t_j)-\bar F(t)\right),   
\end{align*}
where $a=1998$ and $b=2017$.
Hence, we focus on pairwise interactions and we let $C_1$ be given by the balls $B_{\R^2}[0,r]$, $r\geq0$, whereby we obtain a  weighted $K$-function, where we use Ripley's isotropic edge correction (recall Corollary \ref{CorUnbiased}) to correct for edge effects. 
Moreover, we estimate the intensity function of the ground process non-parametrically utilising the {\em density.ppp()} function of the \textrm{R} package {\em spatstat} \citep{Baddeley:etal:16}. 

\begin{figure}[H]
\begin{center}
\begin{tabular}{cccc}
 \includegraphics*[width=0.3\textwidth,height=0.3\textwidth]{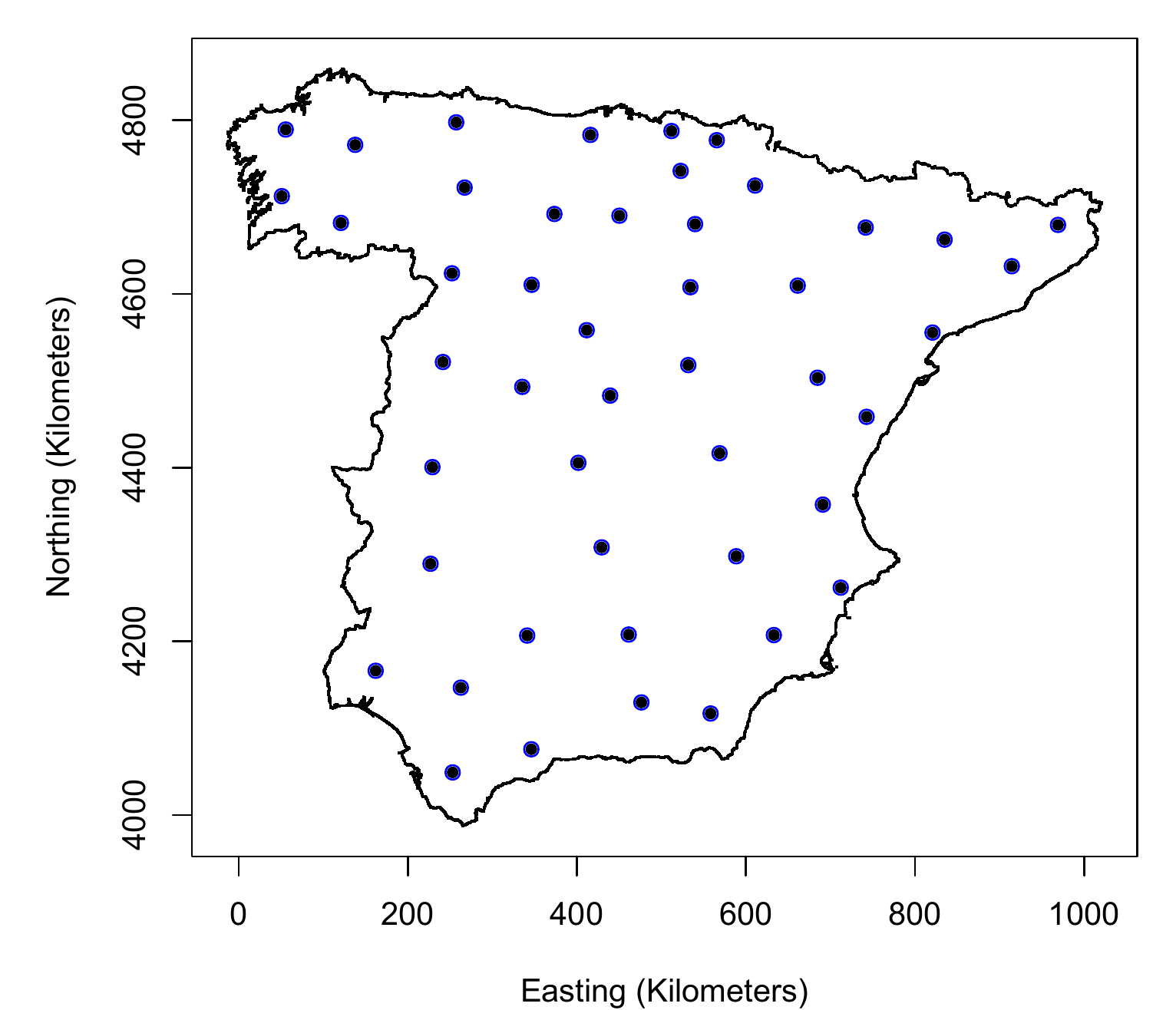}
&
 \includegraphics*[width=0.3\textwidth,height=0.3\textwidth]{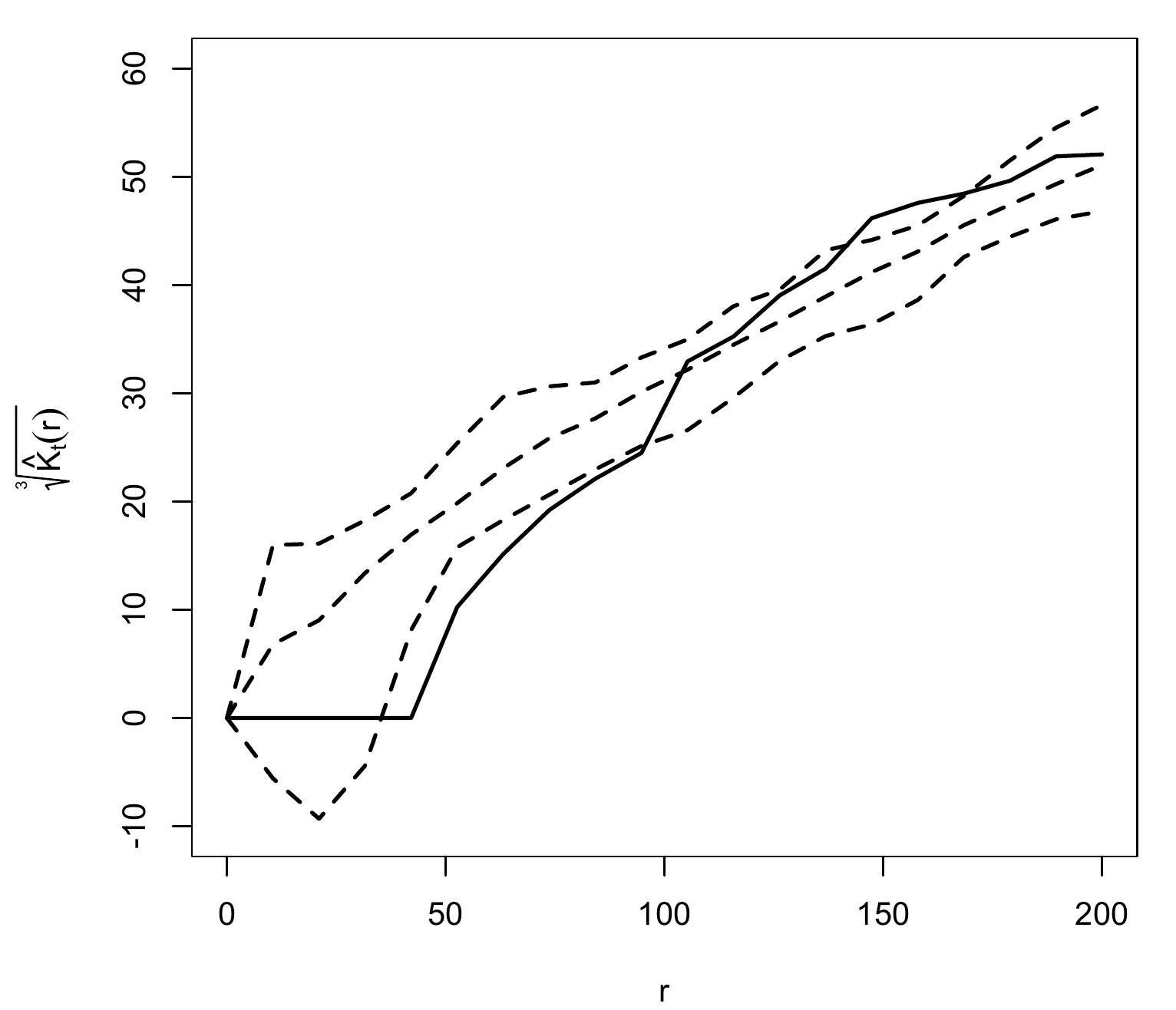}
 \\
  \includegraphics*[width=0.3\textwidth,height=0.3\textwidth]{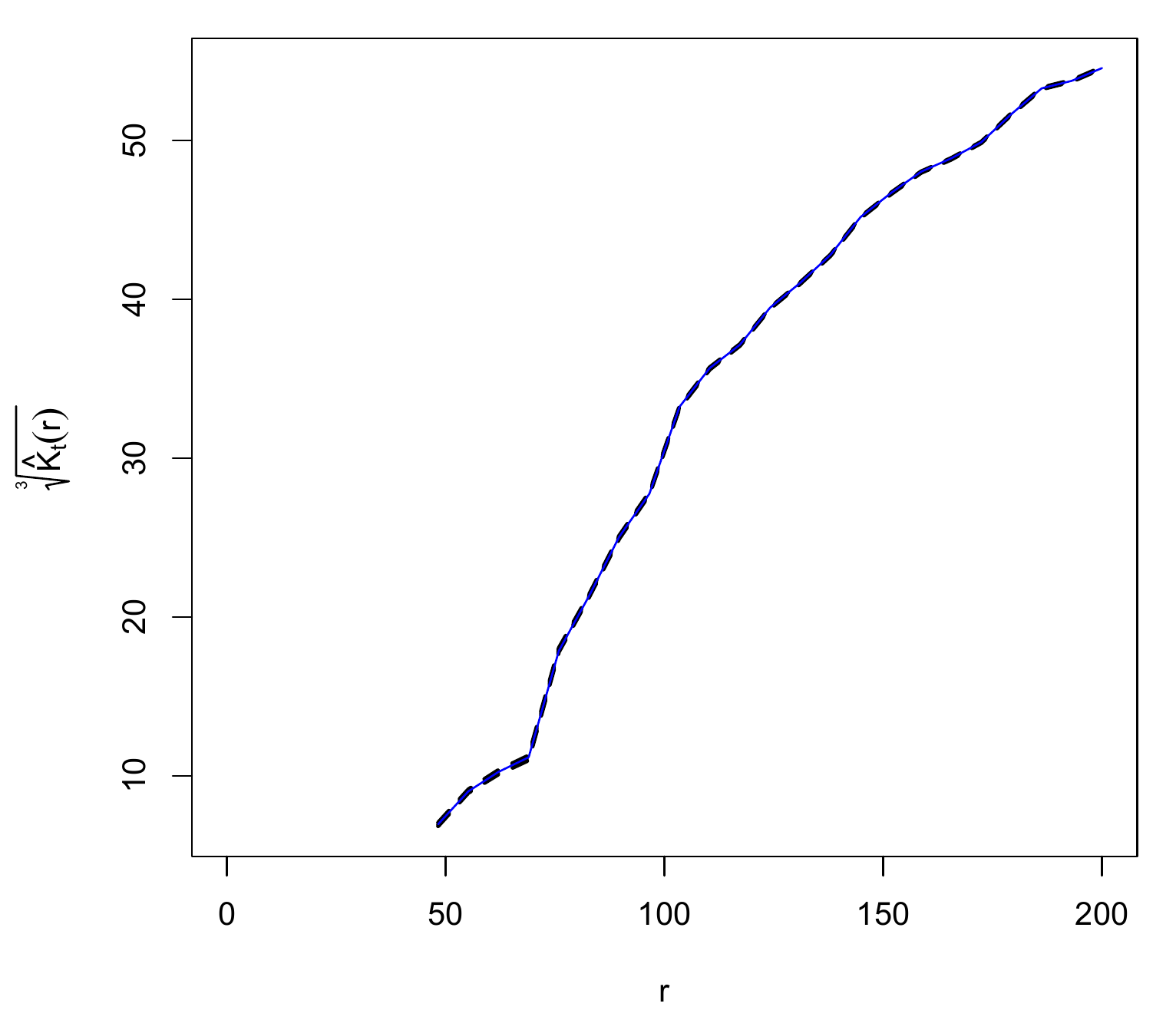}
   &
\includegraphics*[width=0.3\textwidth,height=0.3\textwidth]{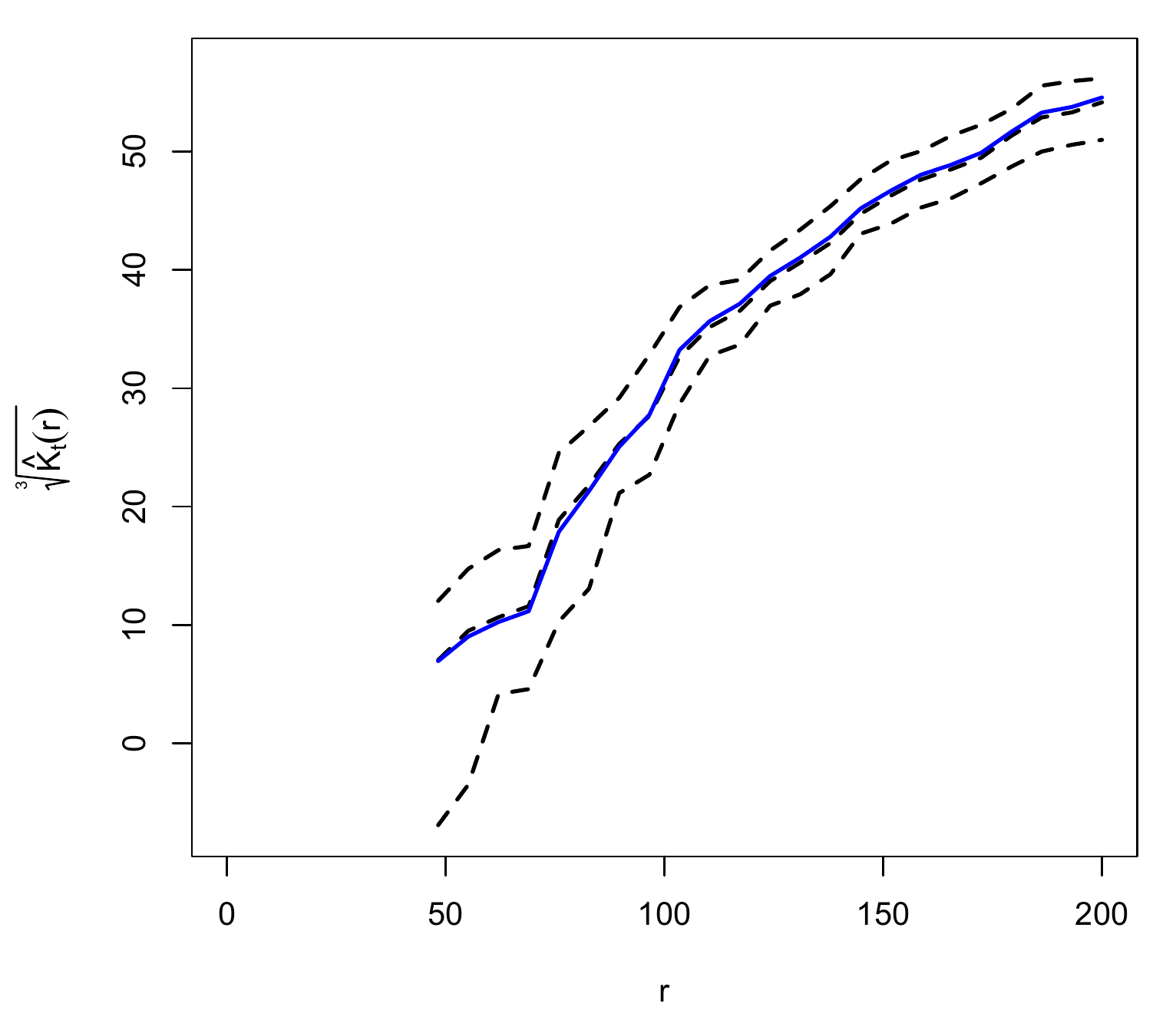}
\end{tabular}
\caption{Spatial point pattern of the centres of 47 provinces on the Spanish mainland (top left panel). 
 $(\hat K_t(r))^{1/3}$ 
 for the demographic evolution in 47 provinces of Spain (solid line), average and simulated pointwise  95\%-envelopes under the homogeneous Poisson process for $(\hat K_t(r))^{1/3}$ (dashed lines) (top right panel). Bottom left: 
 as top right panel but average and simulated 95\%-envelopes from 39 random relabellings of the  demographic evolution data (dashed lines). Bottom right: as left but for the sex ratio data.
 In the bottom panels the curves are shown only for $r \ge 48.27$ km since for the smaller distances the estimated functional mark $K$-function vanishes.}\label{f:3}
\end{center}
\end{figure}

The analysis is illustrated in Figure~\ref{f:3}. The top left panel shows the spatial point pattern of the centres of 47 Spanish provinces.
The other three panels show the resulting functional marked $K$-functions for the Spanish provinces functional marked point pattern (see Figure~\ref{Illu}). 
The transformed $\hat K_t(r)$  for the data together with 
simulated pointwise 95\%-envelopes 
generated from 39 simulations of a homogeneous Poisson process, obtained by keeping the functional mark fixed, is shown in the top right panel; the obtained intensity estimate was quite flat so we proceeded assuming homogeneity. Such envelopes are obtained for each value of $r$ by calculating the smallest and largest simulated values of $(\hat K_t(r))^{1/3}$; see \citep{Diggle2013}. This suggests that the functional marked Poisson process model does not fit the functional marked data set at the top left panel of Figure~\ref{Illu} well; some regular model intuitively makes most sense. 
The bottom panels show the transformed version of $\hat K_t(r)$ for the data and the pointwise 0.05 
level 
envelopes based on 39 simulations for 
demographic evolution on logarithmic scale (left) and sex ratio (right). For $r<48.27 km$, $\hat K_t(r)=0$ and is thus not depicted in the bottom panels.
These functions suggest that there is no spatial dependence between the functional marks, which points to 
that the way the population size and the sex ratio have evolved from 1998 to now in different provinces are spatially independent. 

\section{Discussion}

In principle, the current definition of FMPPs may also accommodate situations where we want to consider locations $X_i\in\mathcal{S}$ and functional marks $F_i(t)\in\mathcal{S}$, $t\in\T\subset[0,\infty)$, which live on some (Polish) space $\mathcal{S}$ other than some Euclidean space; e.g., $\mathcal{S}$ could be a linear network \citep{Baddeley:etal:16,dejby2017capturing} or a sphere \citep{moller2016functional}. 
For instance, in the linear network case, each functional mark would describe the movement along $\mathcal{S}$ of the $i$th point/event/individual, whereby we would have a setup for modelling e.g.\ cars driving on a road network during a given time period.


One could also extend the current setting to having $\T$ be an arbitrary (connected) subset of $\R^d$, for some arbitrary $d\geq1$, so that when $d\geq2$ the variable $t$ in each $F_i(t)$ represents a "spatial" location and $F_i:\T\to\R^k$ is a $k$-variate random field/process. Moreover, this would allow us to let $\T$ be any suitable interval in $\R$, not necessarily a subset of $[0,\infty)$; e.g.\ $\T=\R$.

We have proposed a  general framework to analyse dependent functional data, with an emphasis on the mathematical and statistical aspects of this framework. A wealth of particular cases and models can be treated using our approach, and thus a plethora of real problems can be analysed using this new context. Although only one specific data analytic example has been illustrated here, we believe that we have clearly indicated that many different types of data can be analysed using our framework. 

\section*{Acknowledgements}
The authors are truly grateful to N.M.M.\ van Lieshout (CWI, The Netherlands) for feedback and ideas. The authors are also grateful to Aila S\"arkk\"a (Chalmers University of Technology, Sweden) and Eric Renshaw (University of Strathclyde, U.K.) for ideas, discussions and for introducing us to many of the concepts underlying FMPPs. 
We further thank Mark Hebblewhite (University of Montana, USA) for providing us with the Ya Ha Tinda Elks movements data set.

\putbib
\end{bibunit}


\pagebreak
\begin{bibunit}

\appendix
\appendixpage


\section{The (stochastic) growth-interaction process}\label{SectionGI}
As mentioned in Section \ref{s:DeterministicFunctionalMarks}, one of the models which has given rise to a substantial part of the ideas underlying the construction of FMPPs is the \emph{growth-interaction process}. Originally defined by Renshaw and S{\"a}rkk{\"a} \citep{RS1,RS2}, it has been extensively studied in a series of papers \citep{Comas,MateuFMPP,Cronie,CronieSarkka,CronieForest,RenshawComas,RenshawComasMateu,RS1,RS2}, mainly within the forestry context. However, its representation as a functional marked point process has only been noted in \Citep{MateuFMPP,Cronie}. 

A growth-interaction process $\Psi$ is a spatio-temporal FMPP with $\Psi_{\X\times\A}=\{(X_i,T_i,L_i)\}_{i=1}^N$ and $k=1$, so that $\F=\U$. When the spatial domain is bounded, which is the case in all of the above references, the ground process $\Psi_G=\{X_i\}_{i=1}^N\subset\X$ is generated by a homogeneous Poisson process with intensity $\lambda>0$. 
Conditionally on $\Psi_G$, the auxiliary marks are given by $L_i=(T_i,D_i)\in\A_c=[0,\infty)^2$, $i=1,\ldots,N$, where the $T_i$'s are iid $Uni(\T)$-distributed {\em arrival times} and 
$D_i=T_i+\xi_i$,
where the $\xi_i$'s are iid $Exp(\mu)$-distributed, $\mu>0$, {\em death times}. 
%
Turning to $\Psi|\Psi_{\X\times\A}$, conditionally on $\Psi_{\X\times\A}$ the functional marks are governed by a system of ordinary differential equations,
\beann
\frac{dF_i(t)}{dt}=h(F_i(t);\theta) - 
\sum_{j=1}^{N}
\1\{j\neq i\}
\bar h((X_i,T_i,D_i,F_i(t)),(X_j,T_j,D_j,F_j(t));\theta),
\quad
t\in\supp(F_i)=[T_i,D_i),
\eeann
$i=1,\ldots,N$. 
Here $h(\cdot)$ represents the individual growth of the $i$th \emph{individual}, in absence of spatial interaction with other individuals, and $\bar h(\cdot)$ is the amount of spatial interaction to which individual $i$ is subjected by individual $j$ during the infinitesimal interval $[t,t+dt]$.

As can be found in the above-mentioned references, the usual application of this model is the modelling of the collective development of trees in a forest stand; $X_i$ is the location of the $i$th tree, $T_i$ is its birth time, $D_i$ is its death time, and $F_i(t)$ represents its radius (at breast height) at time $t$. 

As one may argue that this approach does not incorporate individual growth features in the radial growth sufficiently well, \citet{Cronie} suggested that a scaled white noise processes should be added to each functional mark equation, i.e., conditionally on $\Psi_{\X\times\A}$, we would instead consider functional marks
$$
dF_i^*(t) = dF_i(t) + \sigma(F_i(t);\theta)dW_i(t),
$$
where $W_1(t),\ldots,W_N(t)$, are independent standard Brownian motions and $\sigma(\cdot)$ is some suitable diffusion coefficient. 
Here the noise would represent measurement errors and give rise to individual growth deviations. 
The resulting stochastic differential equation marked point process, the \emph{stochastic growth-interaction process}, was then studied in the simplified case where the spatial interaction is negligible, i.e.\ $\bar h(\cdot)\equiv0$. 

A further extension of the model, to the multivariate setting, would be obtained by letting $L_i=(S_i,T_i,D_i)\in \A=\A_d\times\A_c=\{1,\ldots,k_d\}\times[0,\infty)^2$, where $S_i$ would represent the specie of the $i$th tree. The individual growth will here change to $$
h(F_i(t);\theta)=\sum_{l=1}^{k_d}\1\{S_i=l\}h_{l}(F_i(t);S_i,\theta)
$$ 
and the interaction $\bar h((X_i,T_i,D_i,F_i(t)),(X_j,T_j,D_j,F_j(t));\theta)$ will be given by 
\begin{align*}
\sum_{l=1}^{k_d}\sum_{m=1}^{k_d}\1\{S_i=l, S_j=m\} \bar h_{lm}((X_i,T_i,D_i,F_i(t)),(X_j,T_j,D_j,F_j(t));\theta),
\end{align*}
for species specific functions $h_{l}(\cdot)$ and $\bar h_{lm}(\cdot)$, $l,m=1,\ldots,k_d$. In other words, the growths and interactions depend explicitly on the species. 

\section{Examples of applications}\label{SectionApplications}
Besides the applications mentioned in the main text, we here give a list of further possible applications of FMPPs, providing a wide scope of the current framework.

\begin{enumerate}
\item 
{\em Modelling individual/animal movements:} 
Spatial movement data sets include animal (e.g.\ elk) movements, car movements and eye movements, to name a few examples. 
Whether we are modelling the movements of a group of individuals or the movement of a specific individual (recall the lower row of Figure~\ref{Illu}), the $i$th path, $i=1,\ldots,N$, may be described by
\beann
F_i(t) = \1\{t\leq T_i\}X_i + \1\{t\in(T_i,D_i)\}F_i^*((t-T_i)\wedge0) + \1\{t\geq D_i\}F_i(D_i)
\in\X\subset\R^2, 
\quad t\in\T,
\eeann
where $X_i\in\X\subset\R^2$ is the starting location of the $i$th path/piece, $T_i\in\T$ is the associated starting time, $D_i\in\T$, $D_i\stackrel{a.s}{>}T_i$, is the associated end time and $F_i^*(t)=(F_{i1}^*(t), F_{i2}^*(t))\in\R^2$, $t\geq0$, $F_i^*(0)=X_i$, is some continuous spatial stochastic process describing the actual path; here $F_i=(F_{i1}, F_{i2})\in\F=\U^2$, where $F_{i1}=\{F_{i1}(t)\}_{t\in\T}$ and $F_{i2}=\{F_{i2}(t)\}_{t\in\T}$ control the $x$-axis and $y$-axis displacements, respectively. 
Note that the $i$th movement only consists of the spatial point $X_i$ for $t\leq T_i$ and it is absorbed in $F_i(D_i)\in\X$ once $t=D_i$. Here, $\Psi_G=\{(X_i,T_i)\}_{i=1}^N$ constitutes a spatio-temporal point process to which we assign auxiliary marks given by the end times $D_i$; if this point process is finite then we may instead let the auxiliary marks be given by $L_i=(T_i,D_i)\in\A=\T^2$ and the ground process by $\Psi_G=\{X_i\}_{i=1}^N$. What essentially sets the group movement modelling apart from the individual movement modelling is what we associate each of the above components with:
\begin{enumerate}
    \item {\em Movements of a group of individuals:} Here each index $i=1,\ldots,N$ indicates an individual, $X_i$ the location where it was first observed during the study period $\T$, $T_i\in\T$ the time point at which it first started moving during $\T$ and $D_i$ the time at which it stops moving, which happens at the location $F_i(D_i)$. 
    Note that since we assume $N$ to be random, we also make the assumption that we do not know a priori how many individuals we will observe during $\T$ -- we may always condition on $N=n\geq1$.
    An illustrative example is provided by the lower row of Figure~\ref{Illu}. 
    
    \item {\em Modelling the movement of only one individual, who stops at different locations:} Here $\{(X_i,T_i)\}_{i=1}^N$ describes the $N$ locations and times at which the individual stops during the time interval $\T$. The end times satisfy $F_i(D_i)=X_{i+1}$ (the individual moves between $X_i$ and $X_{i+1}$) and $T_i<D_i<T_{i+1}<D_{i+1}$ a.s.\ for any $i=1,\ldots,N-1$; note that the strict inequality $D_i<T_{i+1}$ ensures that there is actually a stop at location $X_{i+1}$ and $T_{i+1}-D_i$ is the amount of time spent at location $X_{i+1}$. 
    
    Note that we may also accommodate analysing $n\geq1$ different individuals in the above fashion by considering a vector of $n$ different such FMPPs to obtain a multivariate FMPP. This may be superpositioned and treated as a multi-type FMPP, where we keep track of a specific individual's index by adding the component $\{1,\ldots,n\}$ to the auxiliary mark space.
    
\end{enumerate}
As monitoring (through e.g.\ GPS) happens discretely in practice, $F_i^*$ can be approximated in a number of ways, e.g.\ by means of line segments or basis expansions etc, and thus capture the main shape of the path/curve. 

If it is the case that the actual spatial movement path has not been recorded, or if the movements are essentially straight lines, we may replace the spatial functional movement mark above by the the total variation/arc length function of the $i$th movement, as it represents the distance travelled by individual $i$ up to time $t$, having started from the random location $X_i$. Note that the functional marks with which we mark $\Psi_{\X\times\A}=\{(X_i,T_i,D_i)\}_{i=1}^N$ here, i.e.\ the total variation functions, take values in $[0,\infty)$ as opposed to in $\X$. Here it may also be relevant to add the individual movement directions as auxiliary marks, since anisotropy may have to be accounted for/analysed.

\item {\em Spread of pollutant:} 
$X_i$ is the pollution location, $F_i(h)$ gives us the ground concentration of the contaminant at distance $h=\|X_i-x\|$, $x\in\X$, from $X_i$ and the auxiliary mark $L_i$ is the type of contaminant considered, provided that there are different types of contaminants present.

\item {\em Modelling tumours:} 
$\X$ represents a region in the human body,  
$X_i\in\X$ is the location of the centre of the $i$th tumour and $F_i(t)$ is its approximate volume/area at time $t$.

\item {\em Disease incidences in epidemics:} 
Each $F_i(t)$ is a stochastic process with piecewise constant sample paths (e.g.\ a Poisson process), which counts the number of incidences having occurred by time $t$ at the epidemic centre $X_i$. 

\item {\em Population growth:} 
$X_i$ is the location of a village/town/city, $L_i$ the time point at which it was founded and $F_i(t)$ its total population at time $t$.

\item {\em Mobile communication:} 
Consider an FMPP $\Psi$ where each $X_i\in\X\subset\R^2$ represents the location of a cellphone caller who makes a call at time $T_i$, which lasts until $D_i=T_i+L_i$, where the auxiliary mark $L_i$ represents the duration of the call.  
Then the function $F_i(t)=\1_{[T_i,D_i)}(t)$ represents the phone call in question. The total load on a server/antenna located at $s\in\X$, which has spatial reach within the region $B\subset\X$, $s\in B$, is provided by $N_s(t)=\sum_{i=1}^{N}\1_B(X_i)F_i(t)$. Assuming that the server has capacity $c_s(t)$ at time $t$, it breaks down if $\sup_{t\in\T}c_s(t)-N_s(t)\leq0$. 
Note the connection with \citet{Baum}.

An extension here could be to let $F_i(t) = \Gamma_i\1_{[T_i,D_i)}(t)$ for some random quantity $\Gamma_i=\Gamma_i(X_i,T_i,D_i)$, which represents the specific load that call $i$ puts on the network.

\end{enumerate}

\section{Specific auxiliary and functional mark space choices}
\label{s:MarkSpaceChoices}
We here look closer at a few different choices for the auxiliary mark space $\A$ and the functional mark space $\F$, as well as the reference measures $\nu_{\A}$ and $\nu_{\F}$.

\subsection{Auxiliary mark spaces}
\label{s:AuxiliaryMarkSpace}

Recall that the auxiliary mark space is given by $\A\subset\R^{k_{\A}}$, $k_{\A}\geq1$. This implies that each auxiliary mark $L_i=(L_{1i},\ldots,L_{k_{\A}i})$ is given by a $k_{\A}$-dimensional random vector. 
We here provide a couple of illustrative examples:
\begin{enumerate}
\item[i)] Type classifications/labels: $k_{\A}=1$ and $\A\subset\R$ is a discrete space, e.g.\ $\{1,\ldots,k_d\}$, $k_d\geq2$, whereby each random variable 
$L_i$ has a discrete distribution on $\A$. 
Recall from Section \ref{s:MarkSpaceChoices} that we refer to this as the \emph{multi-type/multivariate} setting, since here $\Psi_{\X\times\A}$ hereby becomes a multi-type/multivariate point process in $\R^d$.

\item[ii)] Continuous auxiliary information: $k_{\A}\geq1$ and the distributions of the random vectors $L_i=(L_{1i},\ldots,L_{k_{\A}i})\in\A\subset\R^{k_{\A}}$ are continuous. This corresponds to e.g.\ some additional temporal information, such as a \emph{lifetime} which controls the support of the functional mark. Note that here $\Psi_{\X\times\A}$ becomes a marked point process in $\R^d$ with continuous real valued marks in $\A\subset\R^{k_{\A}}$. 

\item[iii)] A combination of i) and ii): $k_{\A}=k_{\A_d}+k_{\A_c}$, $k_{\A_d},k_{\A_c}\geq1$, so that 
$$
L_i=(L_{1i},\ldots,L_{k_{\A_d}i},
L_{(k_{\A_d}+1)i},\ldots,L_{k_{\A}i}
)\in\A=\A_d\times\A_c\subset\R^{k_{\A_d}}\times\R^{k_{\A_c}}=\R^{k_{\A}},
$$
where $L_{1i},\ldots,L_{k_{\A_d}i}$ are discrete random variables on the discrete space $\A_d$ and $L_{(k_{\A_d}+1)i},\ldots,L_{k_{\A}i}$ are continuous random variables on $\A_c$; the above marginal random variables may naturally be dependent. Here $\Psi_{\X\times\A}$ becomes a marked multivariate point process in $\R^d$ and exemplifying through trees, when $k_{\A_d}=k_{\A_c}=1$, we obtain that different types of trees may have different lifetimes. 
\end{enumerate}

Recall that the choice of $\A$ affects how we choose the reference measure $\nu_{\A}$ on $\A$; we require that $\nu_{\A}(\A)<\infty$. 
To exemplify how to choose the auxiliary mark reference measure $\nu_{\A}$, taking the scenarios above into account, when $\A=\A_d\times\A_c\subset\R^{k_{\A_d}}\times\R^{k_{\A_c}}$ is as in iii), 
we will assume that 
it is given by the product measure 
$\nu_{\A}=\nu_{\A_d}\otimes\nu_{\A_c}$, 
where:
\begin{itemize}
\item $\nu_{\A_d}(\cdot)=\sum_{i\in\A_d}\Delta_i\delta_i(\cdot)$, $\Delta_i\geq0$, is some measure on the discrete space $\A_d\subset\R^{k_{\A_d}}$ (e.g.\ some subset of $\Z^{k_{\A_d}}$) such as the counting measure (obtained by setting $\Delta_i\equiv1$) 
if $\A_d$ is bounded (e.g.\ if $k_{\A_d}=1$ and $\A_d=\{1,\ldots,k_d\}$ for some bounded integer $k_d \geq2$, where $\nu_{\A_d}(\A_d)=k_d$). 
If $\A_d$ is an unbounded set, e.g.\ $\A_d=\Z^{k_{\A_d}}$, we could instead choose some suitable discrete probability measure, i.e.\ $\sum_{i\in\A_d}\Delta_i=1$.

\item The measure $\nu_{\A_c}$ governing $\A_c\subset\R^{k_c}$ is given by the Lebesgue measure (or its normalised version, the uniform measure $\nu_{\A_c}(\cdot)=|\cdot|/|\A_c|$) when $\A_c$ is bounded and some suitable probability measure (i.e.\ $\nu_{\A_c}(\A_c)=1$) if $\A_c$ is an unbounded set such as $\R^{k_c}$ or $[0,\infty)^{k_c}$.
\end{itemize}

\subsubsection{ 
Auxiliary mark distributions }\label{auxiliaryMarksDistribution}

Depending on how we define the distributions of the auxiliary marks, the auxiliary mark space $\A$ and the auxiliary mark reference measure $\nu_{\A}$, 
the measures $P_{x_1,\ldots,x_n}^{\A}(\cdot)$ in \eqref{AuxiliaryMarkDistributions} and thereby the product densities and the correlation functionals can take quite different forms. Continuing the discussions above and in Section \ref{SectionRefernceMeasures}, we next look closer at a few particular cases.

\begin{enumerate}
\item {\em Multi-type/multivariate FMPPs:} 
Recall that when 
each auxiliary mark has a discrete distribution on $\A=\{1,\ldots,k_d\}$, $k_d\geq2$, so that we may represent $\Psi$ by $(\Psi_1,\ldots,\Psi_{k_d})$, where $\Psi_i=\{(x,f):(x,j,f)\in\Psi, j=i\}\subset\X\times\F$ 
is the projection of $\Psi$ based on the auxiliary mark set $\{i\}$, $i=1,\ldots,k_d$, we call 
$\Psi$ a multivariate/multi-type FMPP. Its ground process $\Psi_G$ may be represented by $(\Psi_G^1,\ldots,\Psi_G^{k_{d}})$, where $\Psi_G^i$ is the ground process of $\Psi_i$, and $\Psi_i$ has intensity functional $\rho_i(x,f)=Q_{x}^{i,\F}(f)\rho_G^i(x)$, where $\rho_G^i(\cdot)$ is the intensity function of $\Psi_G^i$ and $Q_{x}^{i,\F}(f)$ is the conditional density  governing the distribution of a functional mark of $\Psi_i$ on $\F$, which we interpret conditionally on $\Psi_i$ having a point at location $x\in\X$. 

Turning to the discrete finite auxiliary mark reference measure $\nu_{\A}$, we obtain 
$$
P_{x_1,\ldots,x_n}^{\A}(D_1\times\cdots\times D_n)
=\sum_{l_1\in D_1\cap\A}\cdots\sum_{l_n\in D_n\cap\A} Q_{x_1,\ldots,x_n}^{\A}(l_1,\ldots,l_n)
\nu_{\A}(l_1)\cdots\nu_{\A}(l_n),
$$
for $D_1,\ldots,D_n\in\B(\R)$, where $Q_{x_1,\ldots,x_n}^{\A}(\cdot)$ is the corresponding $n$-dimensional probability mass function. 
Further, 
the 1-dimensional Campbell formula now reads
\[
\E\left[
\sum_{(x,l,f)\in\Psi}
h(x,l,f)
\right]
=
\sum_{l\in\A}
\nu_{\A}(l) 
\int_{\X\times\F}
h(x,l,f)
\rho(x,l,f)\de x\nu_{\F}(df)
\]
and comparing it to the Campbell formula for $\Psi_i$, we obtain 
\begin{align*}
&\int_{\X\times\F}
h(x,f)
\rho_i(x,f)\de x\nu_{\F}(df)
=
\E\left[
\sum_{(x,f)\in\Psi_i}
h(x,f)
\right]
=
\nu_{\A}(i)
\int_{\X\times\F}
h(x,f)
\rho(x,i,f) \de x\nu_{\F}(df)
\end{align*}
for any measurable $h:\X\times\F\to[0,\infty)$, so in particular,
$$
Q_{x}^{i,\F}(f)\rho_G^i(x)=
\rho_i(x,f)=\rho(x,i,f)\nu_{\A}(i)
=
Q_{(x,i)}^{\F}(f) \rho_G(x) Q_{x}^{\A}(i) \nu_{\A}(i)
, \qquad i\in\A=\{1,\ldots,k_d\}.
$$
Recalling where we expressed the auxiliary references measure as $\nu_{\A}(\cdot)=\sum_{i\in\A}\Delta_i\delta_i(\cdot)=\sum_{i=1}^{k_{d}}\Delta_i\delta_i(\cdot)$, $\Delta_i\geq0$, above, when $\A$ contains a finite set of labels we see that by setting all $\Delta_i=1$, i.e.\ letting $\nu_{\A}$ be given by the counting measure on $\A$, we obtain that 
$$
Q_{x}^{i,\F}(f)\rho_G^i(x)
=\rho_i(x,f)
=\rho(x,i,f)
=Q_{(x,i)}^{\F}(f) \rho_G(x) Q_{x}^{\A}(i), 
$$
which often is the most natural choice. 
Hence, if we ignore the functional marks, i.e.\ we consider $\Psi_{\X\times\A}$, we obtain that
$$
\rho_G^i(x)
=\rho_i(x)
=\rho_{\X\times\A}(x,i)
=\rho_G(x) Q_{x}^{\A}(i),
$$
which we recognise from the common multi-type point process setting. 
To exemplify, note that if each auxiliary mark has a (marginal) multinomial distribution with parameter $\pi_i$, $i\in\A=\{1,\ldots,k_d\}$, and $\nu_{\A}$ is the counting measure on $\A$, then 
\[
Q_{x}^{i,\F}(f)\rho_G^i(x)
=\rho_i(x,f)=\rho(x,i,f)
=Q_{(x,i)}^{\F}(f) \rho_G(x)\pi_i,
\]
so if we ignore the functional marks, we obtain that 
\[
\rho_G^i(x)
= \rho_G(x)\pi_i,
\]
which is the intensity often considered in the multi-type point process setting.

\item  When each $L_i=(L_{1i},\ldots,L_{k_{\A}i})\in\A=\A_c\subset\R^{k_{\A_c}}$, $k_{\A}\geq1$, is a continuous random variable and $\A$ is bounded, the natural candidate for $\nu_{\A}$ would be the Lebesgue measure on $(\A,\B(\A))$. Recalling \eqref{AuxMarkDensities}, each $Q_{x_1,\ldots,x_n}^{\A}(l_1,\ldots,l_n)$ may be interpreted as a (conditional) probability density function on $\A^n$ in the classical sense. 
When $\A$ is not bounded, since we have required that $\nu_{\A}$ must be finite, it would be natural to choose $\nu_{\A}$ as some probability measure. E.g., when $\A=\R^{k_{\A}}$, recalling that we interpret $P_{x_1,\ldots,x_n}^{\A}(\cdot)$ in \eqref{AuxiliaryMarkDistributions} as the conditional probability $\P((L_1,\ldots,L_n)\in\cdot|(X_1,\ldots,X_n)=(x_1,\ldots,x_n))$,
by letting $Z_1,\ldots,Z_n$ be iid random variables with distribution $\nu_{\A}$, 
we would obtain
\begin{align*}
P_{x_1,\ldots,x_n}^{\A}(C_1\times\cdots\times C_n)
&=\int_{C_1\times\cdots\times C_n}
Q_{x_1,\ldots,x_n}^{\A}(l_1,\ldots,l_n)
\nu_{\A}(dl_1)\cdots\nu_{\A}(dl_n)
\\
&=
\E[\1\{Z_1\in C_1,\ldots, Z_n\in C_n\}Q_{x_1,\ldots,x_n}^{\A}(Z_1,\ldots,Z_n)]
\end{align*}
for any $C_1,\ldots,C_n\in\B(\R^{k_{\A}})$. If further $\nu_{\A}$ has a density $f_Z(\cdot)$ with respect to the Lebesgue measure on $\R^{k_{\A}}$, we obtain that the density of $P_{x_1,\ldots,x_n}^{\A}(\cdot)$ with respect to the Lebesgue measure is given by $Q_{x_1,\ldots,x_n}^{\A}(l_1,\ldots,l_n)\prod_{i=1}^n f_{Z_i}(l_i)$. 
Hence, there is always a natural way of specifying the density of $P_{x_1,\ldots,x_n}^{\A}(\cdot)$; it is a product of two components, where one controls the dependence structure and the other is a classical multivariate density corresponding to iid random variables. 

\item  In the last scenario,  
$k_{\A}=k_{\A_d}+k_{\A_c}$, $k_{\A_d},k_{\A_c}\geq1$, and 
$$
L_i=(L_{1i},\ldots,L_{k_{\A_d}i},
L_{(k_{\A_d}+1)i},\ldots,L_{k_{\A}i}
)\in\A=\A_d\times\A_c\subset\R^{k_{\A_d}}\times\R^{k_{\A_c}}=\R^{k_{\A}},
$$
where $(L_{1i},\ldots,L_{k_{\A_d}i})$ is a discrete random vector on the discrete space $\A_d$ and $(L_{(k_{\A_d}+1)i},\ldots,L_{k_{\A}i})$ is a continuous random vector on $\A_c\subset\R^{k_{\A_d}}$. 
Here we simply let the reference measure be given by $\nu_{\A}(\cdot)=[\nu_{\A_d}\otimes\nu_{\A_c}](\cdot)$, the product measure of the two reference measures defined on the two spaces $\A_d$ (discrete) and $\A_c$ (continuous).

To exemplify, consider the case where $k_{\A_d}=k_{\A_c}=1$, so that each auxiliary mark has the form $L_i=(L_{i1},L_{i2})\in\A_d\times\A_c\subset\R\times\R$. E.g., $\A_d=\{1,\ldots,k_d\}$ and $\A_c=\R$, where the discrete random variable $L_{i1}\in\{1,\ldots,k_d\}$ may indicate which type the $i$th point belongs to, whereas $L_{i2}$ may serve the purpose of, say, controlling the functional mark(s).  
For Borel sets $D_i=D_{i1}\times D_{i2} \subset \A_d\times\A_c = \A$, $i=1,\ldots,n$, we have
\begin{align*}
&P_{x_1,\ldots,x_n}^{\A}(D_1\times\cdots\times D_n)
=\\
=&
\int_{D_1\times\cdots\times D_n}
Q_{x_1,\ldots,x_n}^{\A}((l_{11},l_{12}),\ldots,(l_{n1},l_{n2}))
\nu_{\A_d}(\{l_{11}\})\nu_{\A_c}(d l_{12})
\cdots
\nu_{\A_d}(\{l_{n1}\})\nu_{\A_c}(d l_{n2})
\\
=&\sum_{(l_{11},\ldots,l_{n1})\in D_{11}\times\cdots\times D_{n1}}\prod_{i=1}^n\nu_{\A_d}(\{l_{i1}\})
\times\\
&\times
\int_{D_{12}\times\cdots\times D_{n2}}
Q_{x_1,\ldots,x_n}^{\A}((l_{11},l_{12}),\ldots,(l_{n1},l_{n2}))
\nu_{\A_c}(d l_{12})\cdots\nu_{\A_c}(d l_{n2}),
\end{align*}
i.e., a conditional mixed distribution function of the auxiliary marks, given that $(L_{i1},L_{i2}) = (l_{i1},l_{i2})$, $i=1,\ldots,n$. 
Note first that if $\nu_{\A_d}$ is the counting measure on $\A_d$, then the product in the expression above vanishes. 
Moreover, 
in many settings it may be natural to let one of the following hold:
\begin{itemize}
\item If all the discrete random variables $L_{11},\ldots,L_{N1}$ are independent of all the continuous random variables $L_{12},\ldots,L_{N2}$, then 
\[
P_{x_1,\ldots,x_n}^{\A}(D_1\times\cdots\times D_n)
=
P_{x_1,\ldots,x_n}^{\A_d}(D_{11}\times\cdots\times D_{n1})
P_{x_1,\ldots,x_n}^{\A_c}(D_{12}\times\cdots\times D_{n2}),
\]
where the first term on the right hand side has the form described in item 1.\ above and the second the form described in item 2.\ above.

\item Let $L_{i1}$ and $L_{i2}$ only depend on each other as well as the associated location $X_i$, but be independent of $\Psi_{\X\times\A}\setminus\{(X_i,(L_{i1},L_{i2}))\}$. Then,
\[
P_{x_1,\ldots,x_n}^{\A}(D_1\times\cdots\times D_n)
=
\prod_{i=1}^n 
P_{x_i}^{\A}(D_{i1}\times D_{i2})
.
\]

\item Combining the former two independence assumptions we obtain that $L_{11},\ldots,L_{N1},L_{12},\ldots,L_{N2}$ are all independent of each other but still location-dependent. Hence,
\begin{align*}
P_{x_1,\ldots,x_n}^{\A}(D_1\times\cdots\times D_n)
&=
\prod_{i=1}^n
P_{x_i}^{\A_d}(D_{i1})
P_{x_i}^{\A_c}(D_{i2})
\\
&=
\prod_{i=1}^n
\sum_{l_{i1}\in D_{i1}}
Q_{x_i}^{\A_d}(l_{i1})\nu_{\A_d}(\{l_{i1}\})
\int_{D_{i2}}
Q_{x_i}^{\A_c}(l_{i2})
\nu_{\A_c}(d l_{i2})
.
\end{align*}
Note that 
if all $L_{i1}$ are conditionally independent Bernoulli distributed random variables with parameter $p(X_i)\in[0,1]$, then 
$Q_{x_i}^{\A_d}(l_{i1})=p(X_i)\1\{l_{i1}=1\} + (1-p(X_i))\1\{l_{i1}=0\}$. In a forestry context, where e.g.\ $L_{i1}=1$ would mean that tree $i$ is a spruce and $L_{i1}=0$ that it is a pine, we are here saying that a tree has a higher probability of being a pine in certain areas but a spruce in other areas. Moreover, if all $L_{i2}$ are independent and exponentially distributed with location-dependent parameter $\mu(X_i)>0$, then $Q_{x_i}^{\A_c}(l_{i2})
\nu_{\A_c}(d l_{i2}) = \mu(x_i)\e^{-\mu(x_i)l_{i2}}\de l_{i2}$, so if we choose the reference measure to be a unit rate exponential distribution, i.e.\ 
$\nu_{\A_c}(d l_{i2})=\e^{-l_{i2}}\de l_{i2}$, 
then 
$Q_{x_i}^{\A_c}(l_{i2})=\mu(x_i)\e^{-l_{i2} \left(\mu(x_i)+1\right)}$.

\end{itemize}

\end{enumerate}

\subsection{Functional mark spaces}
\label{s:FunctionSpaces}

As mentioned in Section \ref{SectionRefernceMeasures}, we here briefly provide an overview of the two most natural Polish functions spaces, which we may employ as functional mark space components $\U$. We further also look at different functional mark distribution  properties. 

Considering a stochastic process $Y$, i.e.\ a measurable mapping
\begin{align}
\label{e:StochasticProcess}
Y:(\T\times\Omega, \B(\T)\otimes\Sigma)\to (\R, \B(\R))
,
\end{align}
we say that $Y$ is a random element in $\U$, or that $Y$ has sample paths in $\U$, 
if for each fixed $\omega\in \Omega$, the function $Y(\cdot,\omega)=\{Y(t, \omega), t\in\T\}$ with parameter $t\in\T$, known as a {\em sample path/realisation}, belongs to $\U$. As such, any sample path is a  
measurable mapping from $(\T,\B(\T))$ to $(\R,\B(R))$ and for each fixed $t\in\T$, the mapping $\Omega\ni\omega\rightarrow Y(t,\omega)\in\R$ is a well-defined random variable on $(\Omega,\Sigma)$. The induced probability measure $P_Y(E)=\P(\{\omega\in\Omega:Y(\cdot,\omega)\in E\})$, $E\in\B(\U)$, is called the distribution of $Y$ and it is governed by the finite dimensional distributions $\P(Y(t_1)\in B_1,\ldots,Y(t_n)\in B_n)$, $n\geq1$, $t_1,\ldots,t_n\in\T$, $B_1,\ldots,B_n\in\B(\R)$, by Kolmogorov's consistency theorem. 
Moreover, since $\U$ is assumed to be a Polish (topological) space, there exists a metric $d_{\U}(f,g)$, $f,g\in\U$, which turns $\U$ into a complete separable metric space.

\subsubsection{Skorohod and $L_p$ spaces}
Consider first the case where $\U$ is given by 
\beann
D_{\T}(\R) = \{f:\T\rightarrow\R | f  \text{ is c\`adl\`ag}\},
\eeann
which is the set of {\em c\`adl\`ag}, i.e.\ right continuous with left limits, functions $f:\T\rightarrow\R$ 
\citep{Billingsley,EthierKurtz,JacodShiryaev,Silvestrov}. 
Consider now the collection $\Lambda$ of all strictly increasing, surjective and Lipschitz continuous functions $\lambda:\T\rightarrow\T$, $\lambda(0)=0$, $\lim_{t\rightarrow\infty}\lambda(t)=T^*=\sup \T$ (with $T^*=\infty$ if $\T=[0,\infty)$), such that
$$
u(\lambda) = \sup_{s,t\in\T : t<s}\left|\log\frac{\lambda(s)-\lambda(t)}{s-t}\right| < \infty.
$$
Endowing $\U=D_{\T}(\R)$ with the metric
\beann
d_{\U}(f,g)=d_{D_{\T}(\R)}(f,g) = \inf_{\lambda\in\Lambda}
\left\{
u(\lambda) \vee
\int_{\T} \e^{-u} \sup_{t\in\T}\{
|f(t\wedge u)
-
g(\lambda(t)\wedge u)
|
\wedge1
\}
\de u
\right\},
\eeann
which turns it into a complete and separable metric space \Citep{EthierKurtz}, the corresponding topology is called a {\em Skorohod topology} and $D_{\T}(\R)$ is called a {\em Skorohod space}. 
%
We note that functions in $D_{\T}(\R)$ include e.g.\ sample paths of Markov processes, L\'{e}vy processes and semi-martingales, as well as empirical distribution functions. We further note that the classical Wiener space, i.e., the space $C_{\T}(\R) = \{f:\T\rightarrow\R : f \text{ continuous}\}$ is a subspace of $D_{\T}(\R)$ and for these functions $d_{D_{\T}(\R)}$ reduces to the uniform metric $d_{\infty}(f,g)=\sup_{t\in\T}|f(t)-g(t)|$. 
In addition, the Borel $\sigma$-algebra $\B(C_{\T}(\R))$ generated by $d_{\infty}(\cdot,\cdot)$ on $C_{\T}(\R)$ satisfies $\B(C_{\T}(\R))=\{E\cap C_{\T}(\R):E\in\B(D_{\T}(\R))\}\subset\B(D_{\T}(\R))$ \Citep[Chapter VI]{JacodShiryaev}. 
Hence, we can accommodate e.g.\ diffusion processes or some other class of processes with continuous sample paths (note also that each space $C_{\T}^k(\R)$, $k\in\N$, of $k$ times continuously differentiable functions is a subspace of $C_{\T}(\R)$). 

Consider now the following definition, given in accordance with \Citep[1.6.1]{Silvestrov}.
\begin{definition}\label{DefCadlagProcess}
A stochastic process $Y(t)=(Y_1(t),\ldots,Y_k(t))$, $k\geq1$, $t\in\T$, is called a $k$-dimensional \emph{c\`adl\`ag stochastic process} if each of its sample paths $Y(\omega)=\{Y(t;\omega)\}_{t\in\T}$, $\omega\in\Omega$, is an element of $\F=\U^k$.
\end{definition}
In light of this definition, since $\U$ is given by the Skorohod space $D_{\T}(\R)$, the functional marks $F_i(t)=(F_{i1}(t),\ldots,F_{ik}(t))\in\R^k$, $t\in\T\subset[0,\infty)$, $i=1,\ldots,N$, will be a collection of (possibly dependent) $k$-dimensional c\`adl\`ag stochastic processes. 
For details on filtrations with respect to c\`adl\`ag stochastic processes, see \Citep[Chapter VI]{JacodShiryaev}.

Next, consider the case where $\U$ is given by the class of measurable functions 
\begin{align}
\label{Lp}
L_p=L_p(\T,\B(\T),|\cdot|)=\left\{f:\T\rightarrow \R \left| \|f\|_{p}=\left(\int_{\T} |f(t)|^p \de t\right)^{1/p} < \infty\right\}\right.
,\qquad
1\leq p<\infty.    
\end{align}
The metric on $\U=L_p$ is given by $d_{\U}(f,g)=d_{L_p}(f,g)=\|f-g\|_{p}$. 
Since $(\T,\B(\T),|\cdot|)$ is $\sigma$-finite and countably generated, it follows that $L_p$ is a complete and separable metric space whenever $1\leq p<\infty$ \citep[p.\ 243]{Billingsley1995PM}. Hence, given the $d_{L_p}$-induced topology, $L_p$ is an example of a Polish space. Moreover, we have that $L_p$ is a Banach space and in the particular case where $p=2$, which constitutes all square integrable functions, $\U$ is additionally a Hilbert space with inner product $\langle f,g \rangle=\int_{\T} f(t)g(t)\de t$.
%
%
%
Recalling the stochastic process $Y$ in \eqref{e:StochasticProcess}, note that we here assume that 
$$
\int_{\T}|Y(t,\omega)|^p\de t<\infty \text{ for all }\omega\in\Omega,
$$
i.e., $Y$ is a (measurable) stochastic process with sample paths in $L_p$. 
There is further a connection between $L_p(\T,\B(\T),|\cdot|)$ and the setting where each $Y(t)$, $t\in\T$, belongs to the space $L_p(\Omega,\Sigma,\P)$ of random variables with finite $p$th moment, i.e., $(\E[|Y(t)|^p])^{1/p}<\infty$. If $Y$ has sample paths in $L_p(\T,\B(\T),|\cdot|)$, then $(\E[|Y(t)|^p])^{1/p}<\infty$, $t\in\T$, since
\[
\E[|Y(t)|^p]
=\int_{\Omega}|Y(t,\omega)|^p \P(d\omega)
\leq 
\int_{\Omega}\int_{\T}|Y(t,\omega)|^p \de t\P(d\omega) <\infty.
\]
In other words, by assuming that our functional mark space $\U$ is given by $L_p(\T,\B(\T),|\cdot|)$, we automatically have that each functional mark $F_i(\omega)=\{F_i(t,\omega)\}_{t\in\T}$, $\omega\in\Omega$, has finite $p$th moment for any $t\in\T$, i.e., $F_i(t)\in L_p(\Omega,\Sigma,\P)$ for any $t\in\T$.
Reversely, if $Y(t)\in L_p(\Omega,\Sigma,\P)$ for any $t\in\T$, i.e., $\E[|Y(t)|^p]=\int_{\Omega}|Y(t,\omega)|^p \P(d\omega)<\infty$, $t\in\T$, it follows that $Y$ a.s.\ has sample paths in $L_p(\T,\B(\T),|\cdot|)$ whenever $\T$ is bounded. When $\T$ is unbounded, by requiring that there is an integrable function $g\in L_p$ such that $|Y(t,\omega)|^p\leq g(t)$, $t\in\T$, for each $\omega\in\Omega$, we have that $\E[|Y(t)|^p]\leq g(t)$ and 
$
\int_{\T}|Y(t,\omega)|^p\de t
\leq
\int_{\T}g(t)\de t <\infty. 
$ 
In other words, $Y$ has sample paths in $L_p(\T,\B(\T),|\cdot|)$ and $Y(t)\in L_p(\Omega,\Sigma,\P)$ for any $t\in\T$, so a functional mark here belongs to both of theses $L_p$-space.

\subsubsection{Functional mark distributions and their finite-dimensional distributions}

We next look closer at different structures for the distributions $P_{(x_1,l_1),\ldots,(x_n,l_n)}^{\F}(\cdot)$ in \eqref{FunctionalMarkDistributions}, or equivalently the densities $Q_{(x_1,l_1),\ldots,(x_n,l_n)}^{\F}(\cdot)$ in \eqref{FunMarkDensities}. 
Recall, in particular, the random functional 
$
\Psi|\Psi_{\X\times\A}=\{F_1|\Psi_{\X\times\A},\ldots,F_N|\Psi_{\X\times\A}\}
=\{F_1(t)|\Psi_{\X\times\A},\ldots,F_N(t)|\Psi_{\X\times\A}\}_{t\in\T}\subset\F 
$ 
from Section \ref{SectionComponents},
which we view as a stochastic process with dimension $N$ for which all the marginal distributions are the same. 
Note that $P_{(x_1,l_1),\ldots,(x_n,l_n)}^{\F}(\cdot)$ governs the distribution of $n$ components of $\Psi|\Psi_{\X\times\A}$.


Being a distribution on the function space $(\F^n,\B(\F^n))$, each $P_{(x_1,l_1),\ldots,(x_n,l_n)}^{\F}(\cdot)$ is an abstract and non-tractable object, 
despite the fact that we may sometimes be able to explicitly define its density $Q_{(x_1,l_1),\ldots,(x_n,l_n)}^{\F}(\cdot)$ with respect to some reference measure $\nu_{\F}^n$ (recall expression \eqref{e:FunctionalReferenceMeasure}). Below, we provide different examples of how such functional mark distributions may be specified e.g.\ through the choice of functional reference measure $\nu_{\F}$. 
Since $\Psi|\Psi_{\X\times\A}$ may be treated as a continuous-time stochastic process, for all practical and mathematically explicit purposes, we turn to the \emph{finite-dimensional distributions} of the functional marks. 
For an informative discussion on finite-dimensional distributions for c\`adl\`ag processes, see \citet[Section 1.6.2]{Silvestrov}. 

Conditionally on $\Psi_{\X\times\A}$, 
assume that we have $\{(X_i,L_i)\}_{i\in I} = \{(x_i,l_i)\}_{i\in I}$, $I=\{1,\ldots,n\}\subset\{1,\ldots,N\}$, 
denote the cardinality of $I$ by $|I|=n$, 
and 
consider 
\bea
\label{ConditionalMarkProcess}
\Psi|\Psi_{\X\times\A}
\supset
F_I=\{F_{I}(t)\}_{t\in\T} = \{(F_{1}(t),\ldots,F_{n}(t))
|\{(X_{j},L_{j})=(x_j,l_j)\}_{j=1}^{n}\}_{t\in\T}\in\F^n=(\U^k)^n,
\eea
where we note that $F_{I}(t)\in(\R^k)^n$ for any $t\in\T$. 
It follows that $P_{(x_1,l_1),\ldots,(x_n,l_n)}^{\F}(\cdot)$, which is the distribution of $F_I$ on $(\F^n,\B(\F^n))=((\U^k)^n,\B((\U^k)^n))$, is uniquely determined by the finite-dimensional distributions of $F_I$ \Citep[Lemma 1.6.1.]{Silvestrov}:
\begin{align*}
&P_{F_I}
=
\{
P_{F_I(S_l)}(A_1\times\cdots\times A_l) 
: l\geq1, S_l=\{s_1,\ldots,s_l\}\subset\T, A_1,\ldots, A_l\in\B(\R^{k})^n\}
,
\\
&P_{F_I(S_l)}(\cdot)
=
\P((F_{I}(s_1),\ldots,F_{I}(s_l))\in \cdot) 
.
\end{align*}
Conditionally on $\Psi_{\X\times\A}$, 
it follows that $\{F_{i}\}_{i=1}^{N}$, i.e.\ $\Psi|\Psi_{\X\times\A}$, is completely determined by the collection $\{P_{F_I}\}_{I\in\mathcal{P}_N}$, where $\mathcal{P}_N$ denotes the power set of $\{1,\ldots,N\}$; recall that conditioning on $\Psi_{\X\times\A}$ implies conditioning on $N$. 
If, in addition, $P_{F_I(S_l)}$ is absolutely continuous with respect to the corresponding Lebesgue measure,
\bea
\label{FidiDensity}
P_{F_I(S_l)}(A_1\times\cdots\times A_l)
&=&
\int_{A_1}\cdots\int_{A_l}
Q_{(x_1,l_1),\ldots,(x_n,l_n)}^{s_1,\ldots,s_l}(u_1,\ldots,u_l) 
\de u_1\cdots \de u_l
\eea
for some probability density $Q_{(x_1,l_1),\ldots,(x_n,l_n)}^{s_1,\ldots,s_l}$ on $((\R^k)^n)^l$, where $s_j\in\T$, $j=1,\ldots,l$, correspond to the evaluation time points and
\beann
u_j
=
\begin{pmatrix}
u_{j1}\\
\vdots\\
u_{jn}
\end{pmatrix}
=
\begin{pmatrix}
u_{j11} & \cdots & u_{j1k} \\
\vdots & \ddots & \vdots \\
u_{jn1} & \cdots & u_{jnk}
\end{pmatrix}
\in\R^{n\times k}
,
\quad
j=1,\ldots,l
.
\eeann
Here row $i\in\{1,\ldots,n\}$ corresponds to the sampling at times $s_1,\ldots,s_l$ of an element $F_i|\Psi_{\X\times\A}=(F_{i1}|\Psi_{\X\times\A},\ldots,F_{ik}|\Psi_{\X\times\A})\in\U^k$ of $F_I$. 
This is a more natural and feasible way to specify a specific model structure for the functional marks, compared to specifying the functional densities directly. To exemplify, 
assume that $F_i(t)\in\R$, i.e.\ $k=1$, and that we are considering the joint  distribution of two functional marks $F_1$ and $F_2$ conditionally on $\Psi_{\X\times\A}$. Then this reduces to
$$
Q_{(x_1,l_1),(x_2,l_2)}^{s_1,\ldots,s_l}
(u_1,\ldots,u_l)
,\quad
u_j=
\begin{pmatrix}
u_{j11} \\
u_{j21}
\end{pmatrix}\equiv
\begin{pmatrix}
u_{j1} \\
u_{j2}
\end{pmatrix}
\in\R^2
,
\quad
j=1,\ldots,l.
$$ 

Considering an FMPP for which the marks have not been sampled in their entirety, but rather at the sample times $s_1,\ldots,s_l\in\T$, we see that the densities $Q_{(x_1,l_1),\ldots,(x_n,l_n)}^{s_1,\ldots,s_l}(\cdot)$, $n\geq1$, constitute the part the likelihood function that corresponds to the functional marks.


Recall the underlying filtered probability space $(\Omega,\Sigma,\Sigma_{\T},\P)$ mentioned in Section \ref{SectionComponents} and assume that $\Psi|\Psi_{\X\times\A}$ is adapted to it, i.e., $\Sigma_{\T}=\{\Sigma_{t}\}_{t\in\T}$ is an increasing family of $\sigma$-algebras such that $F_i(t)|\Psi_{\X\times\A}$ is $\Sigma_{t}$-measurable for any $t\in\T$ and any $i=1,\ldots,N$. 
Recalling $F_I\subset\Psi|\Psi_{\X\times\A}$ from expression \eqref{ConditionalMarkProcess}, 
one way of having a natural filtration/history in this context would be to consider $\Sigma_{t}^{F_I}=\sigma\{F_{I}(s)^{-1}(A) : s\in\T\cap[0,t], A\in\B(\R)^{|I|}\}$, i.e.\ the $\sigma$-algebra generated by $F_I$ over $[0,t]$, $t\in\T$, and 
to assume that the underlying filtered probability space satisfies $\Sigma_{t}^{F_I}\subset\Sigma_{t}$ for any element $I$ in the power set $\mathcal{P}_N$. 



\subsubsection{Random functional mark supports}
We have previously mentioned that one of the main purposes of the auxiliary marks is to control the functional marks. One such setting is the case when the support of $F_i$ is such that $\supp(F_i)=S_i=S_i(X_i,L_i)\subset\T$, $i=1,\ldots,N$, i.e.\  conditionally on $\Psi_{\X\times\A}$, the support depends on $X_i$ and $L_i$. 
Fixing $(X_i,L_i)=(x_i,l_i)$, $i=1,\ldots,n$, it then follows that $Q_{(x_1,l_1),\ldots,(x_n,l_n)}^{\F}(f_1,\ldots,f_n)=0$ if, for any $i=1,\ldots,n$, $f_i\in\F\setminus\{f\in\F : \supp(f)=S_i\}$. 

\subsubsection{Deterministic functional marks}
As in the case of constructed marks (e.g.\ LISA functions) or in the case of the classical growth-interaction process, conditionally on 
$\Psi_{\X\times\A}$ we may want to consider deterministic functional marks. 

Given some deterministic function $f^*(x,l,t)\in\R^k$, $(x,l,t)\in\X\times\A\times\T$, such that, for any fixed $(x,l)\in\X\times\A$, the function $f_{(x,l)}^*=\{f_{(x,l)}^*(t)=f^*(x,l,t):t\in\T\}$ belongs to $\F=\U^k$, assume that we want to construct our functional marks in such a way that $F_i=f_{(x_i,l_i)}^*$ conditionally on $(X_i,L_i)=(x_i,l_i)$. 
To this end, for any $n\geq1$ and $E_1,\ldots,E_n\in \B(\F)$, let 
\begin{align*}
P_{(x_1,l_1),\ldots,(x_n,l_n)}^{\F}(E_1\times\cdots\times E_n) 
&= 
\prod_{i=1}^{n}P_{(x_i,l_i)}^{\F}(E_i)
=\prod_{i=1}^{n} \delta_{f_{(x_i,l_i)}^*}(E_i)
= \prod_{i=1}^{n} \1\{f_{(x_i,l_i)}^*\in E_i\}
,
\end{align*}
where we recall that $\delta_{f_{(x,l)}^*}(\cdot)$ denotes the point mass (Dirac measure) of the function $f_{(x,l)}^*$. 
Hence, $P_{(x_1,l_1),\ldots,(x_n,l_n)}^{\F}(E_1\times\cdots\times E_n) =1$ if for each $i=1,\ldots,n$ we have $F_i=f_{(x_i,l_i)}^*$.

\subsubsection{Wiener measure generated densities}
Assuming that the functional reference measure $\nu_{\F}(\cdot)$ in expression \eqref{e:FunctionalReferenceMeasure} is given by the Wiener measure $W_{\F}(\cdot)$ on $(\F,\B(\F))$, 
we may next ask ourselves the adequate question how one could obtain explicit forms for the densities $Q_{(x_1,l_1),\ldots,(x_n,l_n)}^{\F}(\cdot)$. To give an indication of what this really means, assume that conditionally on $\Psi_{\X\times\A}$,
we want $(F_1(t),\ldots,F_n(t))$ to be given by, say, an $n$-dimensional diffusion process $(Y_1(t),\ldots,Y_n(t))$, $t\in\T$. 
Recalling Section \ref{SectionRefernceMeasures}, under certain conditions the use of the Cameron-Martin-Girsanov theorem (see e.g.\ 
\citet{Skorohod,rajput1972gaussian,JacodShiryaev,maniglia2004gaussian,Klebaner,kallenberg2006foundations,MortersPeresBMbook} and the references therein) gives rise to explicit expressions for $Q_{(x_1,l_1),\ldots,(x_n,l_n)}^{\F}(\cdot)$. 
Furthermore, changing the support of each $F_i$ to some interval $S_i\subset\T$ can be obtained by multiplying the density by the point mass $\delta_{\Gamma_i}(f)$, where 
$\Gamma_i$ is the collection of all functions with support given by $S_i$, $i=1,\ldots,n$, and/or by applying time-change/stopping results to $(Y_1(t),\ldots,Y_n(t))$ before applying the Cameron-Martin-Girsanov theorem.
We note that such a setup would be the underlying construction for the extensions discussed in Section \ref{SectionGI}. 
We stress that most of the ideas indicated may very well be applied to, say, 
L\'{e}vy process/semi-martingale generated random measures on $(\F,\B(\F))$ (see e.g.\ \citet{JacodShiryaev,Skorohod}). 
Note e.g.\ that in the Poisson process functional mark case one would be able to generate multivariate functional marks given by multivariate Poisson processes, a construction similar to the one in \citet{AppleTrees}. 

\subsubsection{Markovian functional marks}\label{SectionMarkovMarks}

In many cases it may be of interest to let the functional marks be given by Markov processes. This is e.g.\ the case when considering the stochastic growth-interaction process or, more generally, when each mark is given by some diffusion process.


We say that $\Psi$ has Markovian functional marks if each component of $\Psi|\Psi_{\X\times\A}$ is a Markov process, which is to say that each $F_{I}$, $I\in\mathcal{P}_N$, constitutes a Markov process: for $s,t\in\T$, $s\leq t$,
\beann
\P\left(F_{I}(t)\in A|\Sigma_{s}\right)
=
\P\left(F_{I}(t)\in A|F_{I}(s)\right)
=
P_{t,s}^{F_{I}}(A;F_{I}(s))
,
\quad A\in\B(\R)^{|I|},
\eeann 
where the right hand side is the 
$F_{I}$-transition probability. 
When there exist transition densities $p_{t,s}^{F_{I}}(u_t;u_s)$, $u_t,u_s\in\R^{n\times k}$, with respect to the corresponding Lebesgue measure, i.e.\ $P_{t,s}^{F_{I}}(A;u_s)=\int_Ap_{t,s}^{F_{I}}(u_t;u_s)\de u_t$, we find that the density in (\ref{FidiDensity}) reduces to 
\bea
\label{TransDens}
Q_{(x_1,l_1),\ldots,(x_n,l_n)}^{s_1}
(u_1) 
\prod_{i=2}^{l} p_{s_i,s_{i-1}}^{F_{I}}(u_i;u_{i-1})
,\quad
u_1,\ldots,u_l\in\R^{n\times k}. 
\eea

\section{Specific classes of FMPPs}\label{SectionClassesSTCFMPP}

Having defined a general structure for FMPPs, we here turn to different model constructions. 

\subsection{Functional marked Cox processes}
We here consider Cox processes (see e.g.\ \Citep[p.\ 154]{SKM}) in the current context of 
functional marking. 
These are common and interesting models for spatial clustering. 

\begin{definition}
Given a locally finite random measure $\Lambda_G$ on $\X$, 
a (spatio-temporal) FMPP $\Psi$ is called a \emph{(spatio-temporal) 
functional marked 
Cox process} (directed by $\Lambda_G$) if the ground process $\Psi_G$ constitutes a $\Lambda_G$-directed Cox process on $\X$. 
In other words, conditionally on $\Lambda_G$, $\Psi_G$ is a Poisson process with intensity measure $\mu_G=\Lambda_G$. 
\end{definition} 

Assume next that the 
random measure $\Lambda_G(C)=\int_{C}\Lambda(x)\de x$, $C\in\B(\X)$, is generated by an a.s.\ non-negative random field $\Lambda=\{\Lambda(x)\}_{x\in\X}$, which consequently must be a.s.\ locally integrable. Note that in the spatio-temporal case it is natural to write $\Lambda(x,t)$ to emphasize that the random field has a time component.
It now follows that 
the $n$th product density is given by \Citep[Chapter 6.2.]{DVJ1}
$$
\rho^{(n)}((x_1,l_1,f_1),\ldots,(x_n,l_n,f_n))
=
Q_{(x_1,l_1),\ldots,(x_n,l_n)}^{\F}(f_1,\ldots,f_n)
Q_{x_1,\ldots,x_n}^{\A}(l_1,\ldots,l_n)
\E\left[\prod_{i=1}^{n}\Lambda_{G}(x_i)\right].
$$

When $\Psi$ is a spatio-temporal functional marked Cox process with spatio-temporal geostatistical marking (recall Definition \ref{DefGeostatMarking}), i.e.\ $F_i(t)=Z_{X_i}(t)$ for some spatio-temporal random field $Z=\{Z_{x}(t)\}_{(x,t)\in\X\times\T}$, we may connect random fields and point processes simultaneously in two different ways; the driving random field $\Lambda$ {\em from underneath} 
and a random field $Z$ {\em from above}. 
This structure is simplified when we consider intensity dependent marks (Section \ref{SectionIntensityDependentMarks}). In the current context this translates into the following definition.
\begin{definition}
A spatio-temporal functional marked Cox process $\Psi$ with random intensity field $\Lambda=\{\Lambda(x,t)\}_{(x,t)\in\X\times\T}$ is said to have \emph{intensity-dependent marks} if, conditionally on $\Psi_G$ and the random field $\Lambda$, the functional marks are given by $F_i(t)=\Lambda(X_i,t)$, $t\in\T$, $i=1,\ldots,N$.
\end{definition}

\subsection{Functional marked Gibbs processes}

We next consider another important class of point processes, in the context of functional marking, namely so-called {\em functional marked Gibbs processes}. These are simply marked Gibbs processes \citep{SKM,Moller,VanLieshout} for which the mark space is given by $\A\times\F$. 

There are various ways to define (marked) Gibbs processes \citep[Section 6]{Moller} and we here consider the statistically most convenient approach, which is through {\em Papangelou conditional intensities}. They are defined through the Georgii-Nguyen-Zessin formula \citep{SKM,Moller,VanLieshout}, which states that for any measurable mapping $h:\X\times\A\times\F\times  N_{lf}\rightarrow[0,\infty)$, 
\begin{align} 
\label{NguyenZessin}
\E\left[\sum_{(x,l,f)\in\Psi} h(x,l,f,\Psi\setminus\{(x,l,f)\})\right]
&=
\int_{N_{lf}}
\int_{\X\times\A\times\F}
h(x,l,f,\psi)\Lambda(d(x,l,f);\psi)
P(d\psi)
\nonumber
\\
&=
\int_{N_{lf}}
\int_{\X\times\A\times\F}
h(x,l,f,\psi)\lambda(x,l,f;\psi)
\de x\nu_{\A}(dl)\nu_{\F}(df)
P(d\psi)
\nonumber
\\
&=
\int_{\X\times\A\times\F}
\E\left[h(x,l,f,\Psi)\lambda(x,l,f;\Psi)\right]
\de x\nu_{\A}(dl)\nu_{\F}(df)
.
\end{align}
The kernel 
$$
\Lambda(C\times D\times E;\psi)=\int_{C\times D\times E}\lambda(x,l,f;\psi)
\de x\nu_{\A}(dl)\nu_{\F}(df), \quad C\times D\times E\in\B(\X\times D\times E), \psi\in N_{lf},
$$
is called the Papangelou kernel and its Radon-Nikodym derivative $\lambda$ (for fixed $\psi\in N_{lf}$) is called the {\em Papangelou conditional intensity} of $\Psi$. Heuristically, we have the following interpretation in terms of conditional infinitesimal probabilities \citep[Section 1.8.2]{VanLieshout}:
\[
\Lambda(d(x,l,f);\psi)
=
\lambda(x,l,f;\psi)\de x\nu_{\A}(dl)\nu_{\F}(df) 
= \P(\Psi(d(x,l,f))=1|\Psi\cap(d(x,l,f))^c=\psi\cap(d(x,l,f))^c)
,
\]
where $^c$ denotes complement and $d(x,l,f)$ is an infinitesimal neighbourhood of $(x,l,f)\in\X\times\A\times\F$, with measure $\de x\nu_{\M}(d(l,f))=\de x\nu_{\A}(dl)\nu_{\F}(df)$. 
It should further be mentioned that $\rho(x,l,f)=\E[\lambda((x,l,f);\Psi)]$ and, indeed, for a Poisson process the Papangelou conditional intensity is given by the intensity function. 

\begin{definition}
The {\em $\nu_{\M}$-averaged Papangelou conditional intensity} with respect to $D\times E\in\B(\A\times\F)$ is defined as
$$
\lambda_{D\times E}(x;\Psi)
=
\frac{\Lambda(dx\times D\times E;\Psi)/\de x}{\nu_{\M}(D\times E)}
=
\frac{\int_{D\times E}
\lambda(x,l,f;\Psi)\nu_{\M}(d(l,f))}
{\nu_{\M}(D\times E)}
=
\frac{\int_{D\times E}
\lambda(x,l,f;\Psi)\nu_{\A}(dl)\nu_{\F}(df)}
{\nu_{\A}(D)\nu_{\F}(E)}
.
$$ 
\end{definition}

Combining \eqref{NguyenZessin} with \eqref{reducedCMMarked}, we obtain 
$$
\E^{!(x,l,f)}\left[h(x,l,f,\Psi)\right]
=\E\left[h(x,l,f,\Psi)\lambda((x,l,f);\Psi)\right]/\rho(x,l,f),
$$
whereby
\begin{align*}
\P_{D\times E}^{!x}(\Psi\in R) 
&= P_{D\times E}^{!x}(R) 
= 
\frac{\int_{D\times E}
\E\left[\1\{\Psi\in R\}\lambda(x,l,f;\Psi)\right]
/\rho(x,l,f)
\nu_{\A}(dl)\nu_{\F}(df)}{\nu_{\A}(D)\nu_{\F}(E)}
\\
&=
\frac{
\E\left[\1\{\Psi\in R\}\int_{D\times E}\lambda(x,l,f;\Psi)/Q_x^{\M}(l,f)
\nu_{\A}(dl)\nu_{\F}(df)
\right]
}{\rho_G(x)\nu_{\A}(D)\nu_{\F}(E)}
\\
&=
\frac{
\int_R
\int_{D\times E}\lambda(x,l,f;\psi)/Q_x^{\M}(l,f)
\nu_{\A}(dl)\nu_{\F}(df)
P(d\psi)
}{\rho_G(x)\nu_{\A}(D)\nu_{\F}(E)}
,
\quad R\in \NN_{lf}.
\end{align*}
Moreover, when $\Psi$ has a common mark distribution $P^{\M}=P^{\A}\otimes P^{\F}$ which coincides with the mark reference measure $\nu_{\M}=\nu_{\A}\otimes\nu_{\F}$ (so that $Q_x^{\M}(l,f)\equiv1$), it follows that 
$$
\lambda_{D\times E}(x;\Psi)
=
\frac{1}{P^{\A}(D)P^{\F}(E)}
\int_{D\times E}
\lambda(x,l,f;\Psi)
P^{\A}(dl)P^{\F}(df),
$$
which is interpreted as the density of the conditional probability that $\Psi$ has a point with mark belonging to $D\times E$ in an infinitesimal neighbourhood $dx$ of $x\in\X$, given $\Psi\cap(\X\setminus dx)\times\A\times\F$. 
In addition, 
\begin{align*}
\P_{D\times E}^{!x}(\Psi\in R) 
&= 
\left.
\E\left[\1\{\Psi\in R\}
\int_{D\times E}
\lambda(x,l,f;\Psi)
P^{\A}(dl)P^{\F}(df)
\right]
\right/(\rho_G(x)P^{\A}(D)P^{\F}(E))
\\
&= 
\left.
\E\left[\1\{\Psi\in R\}
\lambda_{D\times E}(x;\Psi)
\right]
\right/\rho_G(x)
= 
\left.
\int_R
\lambda_{D\times E}(x;\psi)
P(d\psi)
\right/\rho_G(x)
.
\end{align*}
Turning to the summary statistic in \eqref{eq:K_measure}, we here obtain
\begin{align*}
&
\mathcal K_t^{(D\times E)\bigtimes_{i=1}^{n-1}(D_i\times E_i)}(C_1\times\cdots\times C_{n-1})
=
\\
=&
\frac{1}{\prod_{i=1}^{n-1}\nu_{\M}(D_i\times E_i)}
\E\Bigg[
\frac{
\lambda_{D\times E}(z;\Psi)
}{\rho_G(z)}
\mathop{\sum\nolimits\sp{\ne}}_{(x_1,l_1,f_1),\ldots,(x_{n-1},l_{n-1},f_{n-1})\in \Psi}
t((L(z),F(z)),(l_1,f_1),\ldots,(l_{n-1},f_{n-1}))
\times
\\
&\times
\prod_{i=1}^{n-1}
\frac{\1 \{x_i-z\in C_i\}\1\{(l_i,f_i)\in D_i\times E_i\}}{\rho_G(x_i)}
\Bigg]
\end{align*}
for almost every $z\in\X=\R^d$ by Lemma \ref{LemmaWeighted}.

When $\Psi=\{(X_i,L_i,F_i)\}_{i=1}^N$ is finite, i.e.\ $N<\infty$ a.s. (which e.g.\ is the case when $\X$ is bounded), with density $p(\cdot)$ on $N_{lf}$ with respect to the distribution on $(N_{lf},\NN_{lf})$ of a Poisson process with (non-atomic) finite intensity measure, then \citep[Theorem 1.6]{VanLieshout}
\[
\lambda(x,l,f;\psi)
=
\frac{p(\psi\cup\{(x,l,f)\})}{p(\psi)}
,\qquad \psi\in N_{lf}, 
\quad
(x,l,f)\notin\psi.
\]

\section{Proofs}
\label{s:Proofs}

\begin{proof}[Proof of Lemma \ref{LemmaWeighted}]
Applying the Campbell formula, we obtain that
\begin{align*}
&\mathcal K_t^{(D\times E)\bigtimes_{i=1}^{n-1}(D_i\times E_i)}(C_1\times\cdots\times C_{n-1})
=\\
=&
\frac{1}{|W|\nu_{\M}(D\times E)\prod_{i=1}^{n-1}\nu_{\M}(D_i\times E_i)}
\int_{W\times D\times E}
\de x \nu_{\A}(dl)\nu_{\F}(df)
\times
\\
&\times
\int_{(x+C_1)\times D_1\times E_1}\cdots\int_{(x+C_{n-1})\times D_{n-1}\times E_{n-1}}
t((l,f),(l_1,f_1),\ldots,(l_{n-1},f_{n-1}))
\times
\nonumber
\\
&\times
g_{\Psi}^{(n)}((x,l,f),(x_1,l_1,f_1),\ldots,(x_{n-1},l_{n-1},f_{n-1}))
\prod_{i=1}^{n-1}
\de x_i\nu_{\A}(dl_i)\nu_{\F}(df_i)
.
\end{align*}
At the same time, using the Campbell-Mecke formula
\begin{align*}
&\mathcal K_t^{(D\times E)\bigtimes_{i=1}^{n-1}(D_i\times E_i)}(C_1\times\cdots\times C_{n-1})
=
\frac{1}{|W|\nu_{\M}(D\times E)\prod_{i=1}^{n-1}\nu_{\M}(D_i\times E_i)}
\times
\\
&\times
\int_{W\times D\times E}
\E^{!(x,l,f)}\Bigg[
\mathop{\sum\nolimits\sp{\ne}}_{(x_1,l_1,f_1),\ldots,(x_{n-1},l_{n-1},f_{n-1})\in \Psi}
t((l,f),(l_1,f_1),\ldots,(l_{n-1},f_{n-1}))
\times
\nonumber
\\
&\times
\prod_{i=1}^{n-1}
\frac{\1 \{x_i-x\in C_i\}\1\{(l_i,f_i)\in D_i\times E_i\}}{\rho(x_i,l_i,f_i)}
\Bigg]
\de x
\nu_{\A}(dl)\nu_{\F}(df).
\end{align*}
 Hence, since we may choose $W$ to be any bounded Borel set in $\R^d$, 
\begin{align*}
&\E^{!(x,l,f)}\Bigg[
\mathop{\sum\nolimits\sp{\ne}}_{(x_1,l_1,f_1),\ldots,(x_{n-1},l_{n-1},f_{n-1})\in \Psi}
t((l,f),(l_1,f_1),\ldots,(l_{n-1},f_{n-1}))
\times
\nonumber
\\
&\times
\prod_{i=1}^{n-1}
\frac{\1 \{x_i-x\in C_i\}\1\{(l_i,f_i)\in D_i\times E_i\}}{\rho(x_i,l_i,f_i)}
\Bigg]
\\
\stackrel{a.e.}{=}
&
\int_{(x+C_1)\times D_1\times E_1}\cdots\int_{(x+C_{n-1})\times D_{n-1}\times E_{n-1}}
t((l,f),(l_1,f_1),\ldots,(l_{n-1},f_{n-1}))
\times
\nonumber
\\
&\times
g_{\Psi}^{(n)}((x,l,f),(x_1,l_1,f_1),\ldots,(x_{n-1},l_{n-1},f_{n-1}))
\prod_{i=1}^{n-1}
\de x_i\nu_{\A}(dl_i)\nu_{\F}(df_i)
\\
=&
\int_{(x+C_1)\times D_1\times E_1}\cdots\int_{(x+C_{n-1})\times D_{n-1}\times E_{n-1}}
t((l,f),(l_1,f_1),\ldots,(l_{n-1},f_{n-1}))
\times
\nonumber
\\
&\times
g_{\Psi}^{(n)}((0,l,f),(x_1-x,l_1,f_1),\ldots,(x_{n-1}-x,l_{n-1},f_{n-1}))
\prod_{i=1}^{n-1}
\de x_i\nu_{\A}(dl_i)\nu_{\F}(df_i)
\\
=&
\int_{C_1\times D_1\times E_1}\cdots\int_{C_{n-1}\times D_{n-1}\times E_{n-1}}
t((l,f),(l_1,f_1),\ldots,(l_{n-1},f_{n-1}))
\times
\nonumber
\\
&\times
g_{\Psi}^{(n)}((0,l,f),(x_1,l_1,f_1),\ldots,(x_{n-1},l_{n-1},f_{n-1}))
\prod_{i=1}^{n-1}
\de x_i\nu_{\A}(dl_i)\nu_{\F}(df_i)
\\
=&
\int_{C_1\times\cdots\times C_{n-1}}
\kappa_t^{(D\times E)\bigtimes_{i=1}^{n-1}(D_i\times E_i)}
(0,x_1,\ldots,x_{n-1})
\mathcal K_G(dx_1\times\cdots\times dx_{n-1})
,
\end{align*}
where the second equality follows by the imposed $k$th order marked intensity reweighted stationarity of $\Psi$. 
In other words, the reduced Palm expectation above is a.e.\ constant as a function of $x\in\R^d$, and we have
\begin{align*}
&
\nu_{\M}(D\times E)\prod_{i=1}^{n-1}\nu_{\M}(D_i\times E_i)
\mathcal K_t^{(D\times E)\bigtimes_{i=1}^{n-1}(D_i\times E_i)}(C_1\times\cdots\times C_{n-1})
=
\\
=&
\int_{D\times E}
\E^{!(z,l,f)}\Bigg[
\mathop{\sum\nolimits\sp{\ne}}_{(x_1,l_1,f_1),\ldots,(x_{n-1},l_{n-1},f_{n-1})\in \Psi}
t((l,f),(l_1,f_1),\ldots,(l_{n-1},f_{n-1}))
\times
\\
&\times
\prod_{i=1}^{n-1}
\frac{\1 \{x_i-z\in C_i\}\1\{(l_i,f_i)\in D_i\times E_i\}}{\rho(x_i,l_i,f_i)}
\Bigg]
\nu_{\A}(dl)\nu_{\F}(df)
\\
=&
\int_{C_1\times\cdots\times C_{n-1}}
\kappa_t^{(D\times E)\bigtimes_{i=1}^{n-1}(D_i\times E_i)}
(0,x_1,\ldots,x_{n-1})
\mathcal K_G(dx_1\times\cdots\times dx_{n-1})
,
\end{align*}
where $z$ is an arbitrary location in $\R^d$ and the last integrand is defined in \eqref{e:MarkDependenceFunction}.

\end{proof}

\begin{proof}[Proof of Theorem \ref{LemmaUnbiased}]

By the Campbell formula,
\begin{align*}
&
\E\Bigg[
\sum_{(x,l,f)\in\Psi\cap W\times D\times E}
\mathop{\sum\nolimits\sp{\ne}}_{(x_1,l_1,f_1),\ldots,(x_{n-1},l_{n-1},f_{n-1})\in \Psi\setminus\{(x,l,f)\}}
t((l,f),(l_1,f_1),\ldots,(l_{n-1},f_{n-1}))
\times
\\
&
\times
\frac{1}{\rho(x,l,f)}
\prod_{i=1}^{n-1}
\frac{\1 \{x_i\in (W\cap(x + C_i))\}\1\{(l_i,f_i)\in D_i\times E_i\}}{\rho(x_i,l_i,f_i)}
w(x,x_1,\ldots,x_{n-1})
\Bigg]
\\
&=
\int_{D\times E}
\int_{D_1\times E_1\times\cdots\times D_{n-1}\times E_{n-1}}
t((l,f),(l_1,f_1),\ldots,(l_{n-1},f_{n-1}))
\times
\\
&\times
\Bigg(
\int_{W}
\int_{(W\cap(x + C_1))\times\cdots\times (W\cap(x + C_{n-1}))}
w(x,x_1,\ldots,x_{n-1})
\times
\\
&\times
g_{\Psi}^{(n)}((x,l,f),(x_1,l_1,f_1),\ldots,(x_{n-1},l_{n-1},f_{n-1}))
\prod_{i=1}^{n-1}
\de x_{i}
\de x
\Bigg)
\prod_{i=1}^{n-1}
\nu_{\M}(d(l_i,f_i))
\nu_{\M}(d(l,f))
\end{align*}
and by the imposed $k$-MIRS and Fubini's theorem the inner expression satisfies
\begin{align*}
&\int_{W}
\int_{(W\cap(x + C_1))\times\cdots\times (W\cap(x + C_{n-1}))}
w(x,x_1,\ldots,x_{n-1})
\times
\\
&\times
g_{\Psi}^{(n)}((x,l,f),(x_1,l_1,f_1),\ldots,(x_{n-1},l_{n-1},f_{n-1}))
\prod_{i=1}^{n-1}
\de x_{i}
\de x
\\
&\stackrel{k-{\text{MIRS}}}{=}
\int_{W}
\int_{\R^d}\cdots\int_{\R^d}
\prod_{i=1}^{n-1}\1\{x_i\in(W\cap(x + C_i))\}
w(x,x_1,\ldots,x_{n-1})
\times\\
&\times
g_{\Psi}^{(n)}((0,l,f),(x_1-x,l_1,f_1),\ldots,(x_{n-1}-x,l_{n-1},f_{n-1}))
\prod_{i=1}^{n-1}
\de x_{i}
\de x
\\
&\stackrel{u_i=x_i-x}{=}
\int_{\R^d}\cdots\int_{\R^d}
\int_{W}
\prod_{i=1}^{n-1}\1\{u_i+x\in(W\cap(x + C_i))\}
w(x,u_1+x,\ldots,u_{n-1}+x)
\de x
\times\\
&\times
g_{\Psi}^{(n)}((0,l,f),(u_1,l_1,f_1),\ldots,(u_{n-1},l_{n-1},f_{n-1}))
\prod_{i=1}^{n-1}
\de u_{i}
\\
&=
\int_{\R^d}\cdots\int_{\R^d} 
\prod_{i=1}^{n-1}\1\{u_i\in C_i\}
\int_{W}
\prod_{i=1}^{n-1}\1\{(u_i+x)\in W\}
w(x,u_1+x,\ldots,u_{n-1}+x)
\de x
\times\\
&\times
g_{\Psi}^{(n)}((0,l,f),(u_1,l_1,f_1),\ldots,(u_{n-1},l_{n-1},f_{n-1}))
\prod_{i=1}^{n-1}
\de u_{i}
\\
&=
\int_{C_1\times\cdots\times C_{n-1}}
g_{\Psi}^{(n)}((0,l,f),(u_1,l_1,f_1),\ldots,(u_{n-1},l_{n-1},f_{n-1}))
\prod_{i=1}^{n-1}
\de u_{i}
\end{align*}
since
\[
\int_{W}
\prod_{i=1}^{n-1}\1\{(u_i+x)\in W\}
w(x,u_1+x,\ldots,u_{n-1}+x)
\de x
=
1
\]
for almost any $u_i\in C_i$, $i=1,\ldots,n-1$. Hence, by Fubini's theorem and Lemma \ref{LemmaWeighted} the initial expectation is given by
\begin{align*}
&
\int_{D\times E}
\int_{D_1\times E_1\times\cdots\times D_1\times E_1}
t((l,f),(l_1,f_1),\ldots,(l_{n-1},f_{n-1}))
\times\\
&\times
\int_{C_1\times\cdots\times C_{n-1}}
g_{\Psi}^{(n)}((0,l,f),(u_1,l_1,f_1),\ldots,(u_{n-1},l_{n-1},f_{n-1}))
\prod_{i=1}^{n-1}
\de u_{i}
\prod_{i=1}^{n-1}
\nu_{\M}(d(l_i,f_i))
\nu_{\M}(d(l,f))
\\
&=
\int_{C_1\times\cdots\times  C_{n-1}}
\kappa_t^{(D\times E)\bigtimes_{i=1}^{n-1}(D_i\times E_i)}
(0,u_1,\ldots,u_{n-1})
\mathcal K_G(du_1\times\cdots\times du_{n-1})
\\
&=
\nu_{\M}(D\times E)
\prod_{i=1}^{n-1}\nu_{\M}(D_i\times E_i)
\mathcal K_t^{(D\times E)\bigtimes_{i=1}^{n-1}(D_i\times E_i)}(C_1\times\cdots\times C_{n-1})
.
\end{align*}

\end{proof}

\begin{proof}[Proof of Corollary \ref{CorUnbiased}]

Since $x_i\in C_i$ we have that $\{(x_i+x)\in W\}=\{x\in (W-x_i)\}\supset\{x\in\bigcap_{u\in C_i}(W-u)\}=\{x\in W\ominus C_i\}$ by the definition of Minkowski subtraction, so $\{x\in \bigcap_{i=1}^{n-1}W\ominus C_i\}\subset \{x\in\bigcap_{i=1}^{n-1}(W-x_i)\}$ and $\1_{\bigcap_{i=1}^{n-1}W\ominus C_i}(x) \leq \1_{\bigcap_{i=1}^{n-1}(W-x_i)}(x)$, $x\in W$. Hence, 
\begin{align*}
&\int_{W}
\prod_{i=1}^{n-1}\1\{(x_i+x)\in W\}
w_{\ominus}(x,x_1+x,\ldots,x_{n-1}+x)
\de x
=\\
=&
\frac{
\int_{W}
\1\left\{x\in\bigcap_{i=1}^{n-1}(W-x_i)\right\}
\1\left\{x\in \bigcap_{i=1}^{n-1}W\ominus C_i\right\}
\de x
}{\left|\bigcap_{i=1}^{n-1}W\ominus C_i\right|}
=
\frac{
\int_{W}
\1\left\{x\in \bigcap_{i=1}^{n-1}W\ominus C_i\right\}
\de x
}{\left|\bigcap_{i=1}^{n-1}W\ominus C_i\right|}
=1.
\end{align*}
Furthermore, 
\begin{align*}
&\int_W
w_{\cap}(x,x+x_1,\ldots,x+x_{n-1})
\1\left\{
x\in \bigcap_{i=1}^{n-1} (W-x_i)
\right\}
\de x 
= 
\int_W
\frac{
\1\{
x\in \bigcap_{i=1}^{n-1} (W-x_i)
\}
}{|\bigcap_{i=1}^{n-1}(W+(x+x_i))\cap (W+x)|}
\de x
=
\\
&
=
\frac{\int\1\{x\in \bigcap_{i=1}^{n-1} (W-x_i)
\cap W\}\de x}
{|\bigcap_{i=1}^{n-1}(W+x_i)\cap W|}
=
\frac{|\bigcap_{i=1}^{n-1} (W-x_i)
\cap W|}
{|\bigcap_{i=1}^{n-1}(W+x_i)\cap W|}
=1
\end{align*}
since 
$x\mapsto|\bigcap_{i=1}^{n-1}
(W+(x+u_i))\cap (W+x)|=|\bigcap_{i=1}^{n-1}
(W+u_i)\cap W|$, $x\in W$, and 
$|W\cap (W-u)|=|W\cap (W+u)|$ for any $u$ \citep[Section 4.3.2]{Moller}.

Turning to the isotropic correction, we give  the details for $d=2$ here and we refer the reader to \citet{MollerSyversveen,schladitz:Baddeley:00} for $d=3$;
\begin{align*}
 \int_W
w_{\partial}(x,x+x_1){\bf 1}\{x\in  W-x_1\}
\de x 
&= 
\int_{W\cap W-{x_1}}
\frac{\ell\big(\partial b(x, \|x_1\|\big)}{\ell\big(\partial b(x,\|x_1\|)\cap W\big)}
\de x\\
&=
\int_{W\cap W-{x_1}}
\frac{ 2\pi\|x_1\|}{\ell\big(\partial b(x,\|x_1\|)\cap W\big)}
\de x 
=1,
\end{align*}
where the last equality is obtained by using polar coordinates.

\end{proof}

\putbib
\end{bibunit}

\end{document}